\theoremstyle{plain}
\newtheorem{theorem}{Theorem}[section]
\newtheorem{definition}[theorem]{Definition}
\newtheorem{corollary}[theorem]{Corollary}
\newtheorem{proposition}[theorem]{Proposition}
\newtheorem{lemma}[theorem]{Lemma}
\newtheorem{remark}[theorem]{Remark}
\numberwithin{theorem}{section}
\numberwithin{equation}{section}
\newcommand{\average}{{\mathchoice {\kern1ex\vcenter{\hrule height.4pt
width 6pt depth0pt} \kern-9.7pt} {\kern1ex\vcenter{\hrule
height.4pt width 4.3pt depth0pt} \kern-7pt} {} {} }}
\def\R{\mathbb{R}}
\def\div{\text{div}}
\renewcommand{\a }{\alpha }
\renewcommand{\b }{\beta }
\newcommand{\D }{\Delta }
\newcommand{\e }{\varepsilon }
\newcommand{\g }{\gamma}
\newcommand{\G }{\Gamma}
\renewcommand{\l }{\lambda }
\newcommand{\n }{\nabla }
\newcommand{\vp }{\varphi }
\newcommand{\s }{\sigma }
\renewcommand{\t }{\tau }
\renewcommand{\o }{\omega }
\renewcommand{\O }{\Omega }
\newcommand{\ov}{\overline}
\newcommand{\be}{\begin{equation}}
\newcommand{\ee}{\end{equation}}
\newcommand{\de}{\partial}
\newcommand{\ti}{\widetilde}
\renewcommand{\k}{\kappa}
\newcommand{\N}{\mathbb{N}}
\newcommand{\cD}{{\mathcal D}}
\newcommand{\cH}{{\mathcal H}}
\newcommand{\cO}{{\mathcal O}}
\newcommand{\cS}{{\mathcal S}}
\newcommand{\dist}{{\rm dist}}
\newcommand{\B}{{\bf B}}
\newcommand{\eps}{\varepsilon}
\renewcommand{\epsilon}{\varepsilon}
\newcommand{\RNp}{\R^{N+1}_+}
\newcommand{\Ds}{ (-\D)^s}
\begin{document}
\title[Second radial eigenfunctions to a fractional Dirichlet problem]{Second radial eigenfunctions to a fractional Dirichlet problem and uniqueness for a semilinear equation}

 \author[]
{ Mouhamed Moustapha Fall and Tobias Weth}


\address{${}^1$African Institute for Mathematical Sciences in Senegal (AIMS Senegal), 
KM 2, Route de Joal, B.P. 14 18. Mbour, S\'en\'egal.}
\address{${}^2$Goethe-Universit\"{a}t Frankfurt, Institut f\"{u}r Mathematik. Robert-Mayer-Str. 10, D-60629 Frankfurt, Germany.}

\email{weth@math.uni-frankfurt.de}
\email{mouhamed.m.fall@aims-senegal.org}

 \begin{abstract}
   \noindent 
   We analyze the shape of radial second Dirichlet eigenfunctions of fractional Schrödinger type operators of the form $\Ds +V$ in the unit ball $B$ in $\R^N$ with a nondecreasing  radial potential $V$. Specifically, we show that the eigenspace corresponding to the second radial eigenvalue is simple and spanned by an eigenfunction $u$ which changes sign precisely once in the radial variable and does not have zeroes anywhere else in $B$. Moreover, by a new Hopf type lemma for supersolutions to a class of degenerate mixed boundary value problems, we show that $u$ has a nonvanishing fractional boundary derivative on $\partial B$. We apply  this result  to prove uniqueness and nondegeneracy   of positive  ground state solutions to the problem  $\Ds u+\l u=u^p$  on  ${B}$, $\; u=0$ on $\R^N\setminus B$.  Here $s\in (0,1)$, $\l\geq 0$ and  $p>1$ is strictly smaller than the critical Sobolev exponent.
 \end{abstract}

\maketitle
\setcounter{equation}{0}

\section{Introduction}\label{s:Intro}

Let $s \in (0,1)$ and $B:=\{x\in \R^N\,:\, |x|<1\}$ denote the unit ball in $\R^N$. The present paper is devoted to oscillation estimates of the radial  second  eigenfunctions in  the eigenvalue problem
\begin{align}\label{eq:1.1-Lin}
\Ds w+V w  =\s  w \quad\textrm{ in ${B}$}, \qquad w \equiv 0\quad  \text{in $\R^N \setminus B$.} 
\end{align} 
Here $\Ds$ denotes the fractional Laplacian of order $s$, which, under appropriate smoothness and integrability assumptions on the function $w$, is pointwisely given by
\begin{equation}
  \label{eq:def-fractional-laplace}
\Ds w (x) = c_{N,s} \lim_{\epsilon\to0^+} \int_{\R^N\setminus
   B_\epsilon(x) }\frac{w(x)-w(y)}{|x-y|^{N+2 s}}  dy 
\end{equation}
 with $c_{N,s}=2^{2s}\pi^{-\frac{N}{2}}s\frac{\Gamma(\frac{N+2s}2)}{\Gamma(1-s)}$. Moreover, we consider (\ref{eq:1.1-Lin}) in weak sense. So, by definition, an eigenfunction $u$ of (\ref{eq:1.1-Lin}) is contained in the Sobolev space
 $$
 \cH^s(B):= \{w \in H^s(\R^N) \:: \: w \equiv 0 \text{ in $\R^N \setminus B$} \},
 $$
 and it satisfies
 $$
 [w,v]_s + \int_{B}V wv\,dx = \s \int_{B}wv \,dx \qquad \text{for all $v \in \cH^s(B)$.}
 $$
 Here 
\begin{equation}
  \label{eq:def-gagliardo-nirenberg-quadratic-form}
(v_1,v_2) \mapsto [v_1,v_2]_s = c_{N,s} \int_{\R^N \times \R^N}\frac{(v_1(x)-v_1(y))(v_2(x)-v_2(y))}{|x-y|^{N+2s}}dxdy
\end{equation}
denotes the bilinear form associated with the fractional Laplacian, and we shall also write $[v]_s^2 := [v,v]_s$ in the following. Moreover, $H^s(\R^N)$ is the usual fractional Sobolev space of functions $w \in L^2(\R^N)$ with $[w]_s^2 < \infty$. Here we note that the bilinear form $[\cdot,\cdot]_s$ can also be represented via Fourier transform by 
\begin{equation}
  \label{eq:fourier-transform-representation}
[v_1,v_2]_s = \int_{\R^N}|\xi|^{2s} \widehat{v_1}(\xi)\widehat{v_2}(\xi)\,d\xi,  
\end{equation}
and this allows to extend the definition of $[\cdot,\cdot]_s$ to all $s \ge 0$.

If $V\in L^q(B)$ for some $q>\max(\frac{N}{2s},1)$, then 
\begin{equation}
  \label{eq:compact-embedding}
\text{the embedding $\cH^s(B) \hookrightarrow L^2(B;((1+|V|)dx)$ is compact,}  
\end{equation}
where, here and in the following, $L^2(B;(1+|V|)dx)$ denotes the space of measurable functions $u: B \to \R$ with $\int \limits_B |u|^2(1+|V|)dx< \infty$.
This follows since $2q' < 2_s^*$ in this case, where $2^*_s$ is the critical fractional Sobolev exponent given by
$$
  2^*_s=\frac{2N}{N-2s} \,\,\textrm{ if }  2s<N\qquad \textrm{ and } \qquad  2^*_s=+\infty\,\, \textrm{ if } 2s\geq 1=N .
$$
Indeed, we then have a compact Sobolev embedding $\cH^s(B) \hookrightarrow L^{2q'}(B)$ and a continuous embedding
$L^{2q'}(B) \hookrightarrow  L^2(B,(1+|V|)dx)$, the latter being a consequence of Hölder's inequality. 

If, in addition, $V$ is a radially symmetric function, then it follows from (\ref{eq:compact-embedding}) and a classical argument that there exists  a sequence of discrete eigenvalues of \eqref{eq:1.1-Lin} corresponding to radial eigenfunctions. These eigenvalues are given through the min-max characterization
\be \label{eq:sig-kV-intro}
 \s_k(V)=\inf_{\stackrel{\cS\subset \cH^s_{rad}(B)}{\text{dim}(\cS)=k}}\sup_{w\in \cS\setminus \{0\}}\frac{[w]^2_s+\int_{B}V w^2\,dx }{\|w\|_{L^2(B)}^2}, \qquad  k\geq 1, 
 \ee
 where $\cH^s_{rad}(B)$ is the closed subspace of radial functions in  $\cH^s(B)$. It is well-known that the first eigenvalue $\s_1(V)$ is simple, and the corresponding eigenspace is spanned by a positive eigenfunction $w_1$.

 Moreover, from \eqref{eq:sig-kV-intro} one may, by a standard argument, obtain the alternative useful representation
 \be
 \label{eq:sig-kV-intro-s-2-alternative}
 \s_2(V)=\inf_{\stackrel{w \in \cH^s_{rad}(B)}{\langle w, w_1 \rangle_{L^2(B)}=0}} \frac{[w]^2_s+\int_{B}V w^2\,dx }{\|w\|_{L^2(B)}^2}.
 \ee
 In the following, we wish to derive qualitative properties of eigenfunctions of (\ref{eq:1.1-Lin}) corresponding to the eigenvalue $\s_2(V)$. Up to now, few results about simplicity of Dirichlet  eigenvalues and oscillation estimates of   Dirichlet eigenfunctions
 \begin{center}
{\bf Correction!}   
 \end{center}
 of the operator $\Ds +V$ in $B$ are available,  even in the simple case $V\equiv0$.    Indeed, for $N=1$,    the papers \cite{KKM,K} first proved  simplicity  of  $\s_k(0)$  for $s\in [1/2,1)$.  This result is recently extended to all $s\in (0,1)$ in \cite{FGMP}, where also generic simplicity of Dirichlet eigenvalues in smooth domains was proven.  
Finally, the simplicity of   $\s_k(0)$, for all $k\geq 1,$  has been recently  proven  in \cite{Djitte-Fall-Weth}.   

 The first main result of the paper is the following. For simplicity, we write $B_r:= B_r(0)$ for $r>0$ from now on.

 \begin{theorem}
   \label{main-theorem}
 Suppose that, for some $q>\max(\frac{N}{2s},1)$ and $\b>0$,    
\begin{equation}
  \label{main-assumption-V}
\text{$V \in L^q(B) \cap C^\beta_{loc}(B)$   is radial and radially nondecreasing.}  
\end{equation}
Then $\s_2(V)$ is simple, and the associated eigenspace is spanned by an eigenfunction $w_2$ which changes sign exactly once in the radial variable. More precisely, there exists $r_0 \in (0,1)$ with the property that $w_2>0$ on $B_{r_0}$ and $w_2 < 0$ on $B \setminus \overline{B_{r_0}}$. Moreover, the function
$w_2\big|_{B_{r_0}}$ is decreasing in the radial variable. In addition, if $V \in L^\infty(B)$, then we have 
 \begin{equation}
   \label{eq:main-theorem-hopf-property}
\psi_{w_2}(1)<0,
 \end{equation}
 where $\psi_{w_2}(1):=\displaystyle \liminf_{|x| \nearrow 1}\frac{w_2(x)}{(1-|x|)^s}$.
 \end{theorem}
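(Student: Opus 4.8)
The plan is to trade the nonlocal problem \eqref{eq:1.1-Lin} for a degenerate but \emph{local} one through the Caffarelli--Silvestre extension. For $w\in\cH^s(B)$ let $\wtilde w$ be its $s$-harmonic extension to $\RNp$, so that $\div(t^{1-2s}\nabla\wtilde w)=0$ in $\RNp$, $\wtilde w=0$ on $(\R^N\setminus B)\times\{0\}$, the weighted Dirichlet energy $\cE(\wtilde w):=\int_{\RNp}t^{1-2s}|\nabla\wtilde w|^2\,dx\,dt$ equals $[w]_s^2$, and the normalised weighted conormal derivative of $\wtilde w$ along $B\times\{0\}$ equals $\Ds w$. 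A radial eigenfunction of \eqref{eq:1.1-Lin} then corresponds to a rotationally symmetric solution $\wtilde w$ of the mixed (Steklov--Dirichlet type) problem with boundary datum $(\s-V)w$ on $B\times\{0\}$, the min--max of the Rayleigh quotient $\cR(\wtilde w):=(\cE(\wtilde w)+\int_B Vw^2)/\|w\|_{L^2(B)}^2$ reproduces the $\s_k(V)$, and all questions about the nodal set of $w$ in $B$ become questions about the nodal set of the local function $\wtilde w$. Before anything else I would record that, since $V\in C^\beta_{\loc}(B)$, every eigenfunction is continuous in $B$, so its sign regions are open, and that its nodal set meets $B$ in a closed set with empty interior by interior unique continuation for $\Ds$.

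Next I would prove that the extension of every $\s_2(V)$-eigenfunction $w$ has exactly two nodal components, by a Courant-type truncation argument exploiting a strictness special to the nonlocal setting. Since $w\perp w_1$ in $L^2(B)$ and $w_1>0$, $w$ changes sign, so $\wtilde w$ has at least two nodal components; assume for contradiction there are at least three and pick three of them, $D_1,D_2,D_3$, with truncations $v_i:=\wtilde w\,\mathbf 1_{D_i}$ in the extension space, pairwise disjoint supports, and traces $w_i=w\mathbf 1_{\Omega_i}$ on pairwise disjoint relatively open sets $\Omega_i\subset B$. Testing the weak formulation on $D_i$ against $v_i$ gives $\cE(v_i)+\int_B Vw_i^2=\s_2(V)\|w_i\|_{L^2(B)}^2$; writing $u_1=w,u_2,\dots,u_d$ for a basis of the $\s_2(V)$-eigenspace (of dimension $d<\infty$), a direct bilinear computation in which all $V$-contributions cancel shows that every element of $\spann\{v_1,v_2,v_3,\wtilde u_2,\dots,\wtilde u_d\}$ has $\cR$ equal to $\s_2(V)$, and that such an element is $s$-harmonic only if its $v_i$-part vanishes; comparing with the genuine ($s$-harmonic) extension of the trace, the traces of these functions span a $(d+2)$-dimensional subspace of $\cH^s_{rad}(B)$ on which the Rayleigh quotient $Q$ of \eqref{eq:sig-kV-intro} satisfies $Q\leq\s_2(V)$. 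This forces $\s_{d+2}(V)\leq\s_2(V)$, which is impossible since $\s_2(V)$ has multiplicity exactly $d$. Hence $\wtilde w$ has exactly two nodal components $D^+$ (where $\wtilde w>0$) and $D^-$ (where $\wtilde w<0$), and $\{w>0\}$, $\{w<0\}$ are their rotationally symmetric traces.

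The heart of the proof, and in my view the main obstacle, is to upgrade this into the precise radial picture: $\{w_2>0\}=B_{r_0}$, $\{w_2<0\}=B\setminus\ov{B_{r_0}}$, with $w_2$ decreasing in $|x|$ on $B_{r_0}$. Two nodal components alone still permit, for example, $w_2$ positive near the origin \emph{and} near $\partial B$ with a negative spherical shell in between, and the obstruction to ruling this out is that $w_2$, while positive on $B_{r_0}$, carries \emph{negative exterior data} there, so classical positivity and symmetry techniques do not apply directly. I would run the moving plane method for the degenerate operator in the extension: fix a unit vector $e$, and for $\mu\in(0,1)$ reflect across $T_\mu=\{x\cdot e=\mu\}$ and put $v_\mu=\wtilde w(\,\cdot_\mu)-\wtilde w$ on the cap $\{x\in B:x\cdot e>\mu\}\times(0,\infty)$. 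Because $|x_\mu|<|x|$ there and $V$ is radially nondecreasing, the difference of the two Steklov conditions produces a term of the form $(V(x)-V(x_\mu))\,w(x_\mu)$ of a favourable sign wherever the reflected point lies in $\{w\geq0\}$; combined with the antisymmetric maximum principle for $\div(t^{1-2s}\nabla\,\cdot\,)$ in narrow caps --- which allows one to start the sweep at $\mu$ close to $1$ --- and the usual continuity argument, this yields $w(x)\leq w(x_\mu)$ on the part of each cap that stays in $\{w>0\}$. Together with continuity of $w$ and the emptiness of the interior of its nodal set, this forces $\{w_2>0\}$ to be a ball $B_{r_0}$ on which $w_2$ is radially decreasing and $\{w_2<0\}=B\setminus\ov{B_{r_0}}$. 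Simplicity of $\s_2(V)$ then follows, since a suitable linear combination of two $\s_2$-eigenfunctions vanishes at the origin and hence, being again a $\s_2$-eigenfunction to which the structure above applies, must be identically zero. The delicate part throughout is this antisymmetric maximum principle bookkeeping in the presence of negative exterior values, which is exactly where the monotonicity of $V$ is indispensable.

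Finally, for \eqref{eq:main-theorem-hopf-property} I would apply the announced Hopf-type lemma to the extension of $-w_2$. Near $\partial B\times\{0\}$ the function $-\wtilde{w_2}$ is positive (its trace $-w_2>0$ on $(B\setminus\ov{B_{r_0}})\times\{0\}$ by the previous step), vanishes on $(\R^N\setminus B)\times\{0\}$, and satisfies a degenerate mixed boundary value problem of the type isolated in the paper: the equation $\div(t^{1-2s}\nabla\,\cdot\,)=0$ in the interior, a homogeneous Dirichlet condition on the exterior trace, and a Steklov-type condition with coefficient $\s_2(V)-V$ on the interior trace; in particular it is a nonnegative supersolution there. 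Constructing an explicit $t^{1-2s}$-weighted barrier comparable to $(1-|x|)^s$ near the Dirichlet portion of the boundary, the Hopf lemma then yields a strictly positive lower bound for the fractional normal derivative of $-w_2$, that is $\liminf_{|x|\nearrow1}(-w_2(x))/(1-|x|)^s>0$, which is \eqref{eq:main-theorem-hopf-property}. The qualitative structure obtained in the previous step --- in particular that $-w_2$ is positive on a full one-sided neighbourhood of $\partial B$ inside $B$ --- is precisely what makes the supersolution hypothesis of the Hopf lemma available.
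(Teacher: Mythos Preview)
Your extension-based nodal-domain count and your final use of the Hopf-type lemma are essentially what the paper does (its Lemma~2.5 and Proposition~3.6 via Theorem~5.2). The gap is in the moving-plane step, and it is a real one.

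On the trace $B\cap\{x\cdot e>\mu\}$ the comparison function $v_\mu=\wtilde w(\cdot_\mu)-\wtilde w$ satisfies a Robin condition with inhomogeneous term $(V(x)-V(x_\mu))\,w(x_\mu)$. Monotonicity of $V$ gives $V(x)-V(x_\mu)\ge 0$, but this is favourable only where $w(x_\mu)\ge 0$. Your fix of restricting to ``the part of each cap that stays in $\{w>0\}$'' is not a legitimate application of the maximum principle: that region has an a priori unknown free boundary on which you have no information about $v_\mu$, and the narrow-cap argument that initiates the slide needs the differential inequality on the \emph{whole} cap. In fact, in the configuration you are trying to establish ($w_2<0$ near $\partial B$), for $\mu$ close to $1$ the reflected points $x_\mu$ lie in $\{w_2<0\}$, the inhomogeneous term is $\le 0$, and you cannot even start. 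There is a second obstruction on the Dirichlet side: for $x\notin B$ with $x_\mu\in B$ one has $v_\mu(x,0)=w(x_\mu)$, which again can be negative. Finally, note that the theorem does \emph{not} assert monotonicity of $w_2$ on $B\setminus\overline{B_{r_0}}$, so a global inequality $w(x)\le w(x_\mu)$ is not to be expected; this is a further sign that the route cannot close.

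The paper proceeds quite differently. It first proves $w_2(0)\neq 0$ for every nontrivial $\s_2(V)$-eigenfunction by a Schwarz rearrangement argument: assuming $w_2(0)=0$ and that $w_2$ changes sign once (established via your nodal-domain count and a curve-intersection lemma), one replaces $w_2^+$ by its rearrangement $v_*$, controls the potential term via Hardy--Littlewood against the nondecreasing $\s_2-V$, and controls the cross term $[v_*,w_2^-]_s$ via an explicit hypergeometric kernel computation; equality is forced throughout, giving $w_2^+=v_*$ and hence $w_2(0)>0$, a contradiction. Simplicity follows. The single-sign-change is then obtained by a two-step continuation argument, first in $s\in(0,1]$ with $V\equiv 0$ starting from the classical Laplacian, then in $\tau\in[0,1]$ with $V_\tau=\tau V$, exploiting the equivalence ``$w_2$ changes sign once $\Longleftrightarrow w_2(0)\int_B w_2\,dx<0$''; the left side is closed and the right side open under the uniform convergence supplied by Lemma~2.3. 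The radial monotonicity of $w_2$ on $B_{r_0}$ is then read off from the equality case of the rearrangement, not from any moving-plane comparison.
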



Theorem~\ref{main-theorem} should be compared with Theorems 1 and 2 in the paper \cite{FLS} of Frank, Lenzmann and Silvestre. These theorems are concerned with radial second eigenfunctions of the operator $\Ds + V$ in the entire space, see also \cite{FL} for the case $N=1$. Assuming that 
\begin{equation}
  \label{main-assumption-V-variant-R-N}
\text{$V \in C^\beta(\R^N)$ for some $\beta>\max\{0,1-2s\}$, $V$ is radial and radially nondecreasing,}  
\end{equation}
it is shown in \cite[Theorem 1]{FLS} that the equation
\begin{equation}
  \label{eq:FLS-equation}
\Ds w + V w = 0 \qquad \text{in $\R^N$}
\end{equation}
has at most one bounded radial solution with $w(x) \to 0$ as $|x| \to \infty$ which satisfies $w(0) \not =0$. Moreover, assuming in addition that $\Ds + V$ has at least two radial eigenvalues below the essential spectrum, it is shown in \cite[Theorem 2]{FLS} that the second radial eigenvalue is simple and eigenfunctions change sign precisely once.

The proof of \cite[Theorem 1]{FLS} strongly relies on a Hamiltonian identity involving the $s$-harmonic extensions of solutions $w$ of (\ref{eq:FLS-equation}). 
Here,  instead, we  use a rearrangement argument to show that $w_2(0)\not =0$ for every nontrivial second eigenfunction of (\ref{eq:1.1-Lin}), which then shows the simplicity of $\s_2(V)$ under assumption~(\ref{main-assumption-V}). It is interesting to note that this rearrangement argument can also be used for second eigenfunctions of the full space problem and applies under weaker regularity assumptions than (\ref{main-assumption-V-variant-R-N}).

Once we have established the property $w_2(0) \not = 0$, we will then use a continuation argument in two steps, starting from second radial eigenfunctions of the classical Dirichlet Laplacian, to show that $w_2$ changes sign precisely once. A key property used in this continuation argument is the equivalence 
$$
\rm{(I)}\: \text{$\;w_2$ changes sign precisely once} \qquad \qquad \Longleftrightarrow \qquad \qquad \rm{(II)}\quad \text{$w_2(0) \int_B w_2\,dx <0$}.   
$$
This equivalence is highly useful for the continuation argument as (I) is a closed condition while (II) is an open condition in an appropriate norm. A further open condition is given by (\ref{eq:main-theorem-hopf-property}), but (II) is easier to use when considering continuous dependence on parameters. Therefore we will not use (\ref{eq:main-theorem-hopf-property}) in the continuation argument. In fact, (\ref{eq:main-theorem-hopf-property}) will be established independently as a consequence of a more general Hopf type lemma, see Theorem~\ref{new-Hopf-lemma-simple-version} below. We point out the use of a continuation argument is inspired by the proof of \cite[Theorem 2]{FLS}, but the argument itself is quite different. For a more detailed comparison, see Remark~\ref{rem:Enier-space} below.

We also mention that Frank, Lenzmann and Silvestre used their analysis in \cite{FLS} on second radial eigenfunctions to prove uniqueness and nondegeneracy of ground state solutions up to translations of the semilinear equation 
  \begin{align}\label{eq:1.1RN}
\Ds u+ \lambda u  = u^{p}\quad\text{ in $\R^N$},\qquad u>0 \quad\textrm{ in $\R^N$}, \qquad  u\in H^s(\R^N),
\end{align} 
where $\lambda>0$ and $p\in (1, 2^*_s-1)$, see Theorems 3 and Theorem 4 in \cite{FLS} and also \cite{FL} for earlier work on the case $N=1$. In the present paper, we shall use Theorem \ref{main-theorem} to derive the nondegeneracy and uniqueness  of ground state solutions to the problem
\begin{align}\label{eq:1.1}
\Ds u+\l u  = u^{p}\quad\text{ in ${B}$},\qquad u>0 \quad\textrm{ in ${B}$}, \qquad u = 0\quad\text{ in $\R^N \setminus {B}$,}
\end{align} 
for $\lambda \ge 0$ and  $p\in (1, 2^*_s-1)$. Here, by a ground state solution,  we mean solutions $u$ to \eqref{eq:1.1} satisfying  
\begin{equation}
  \label{eq:ground-state-solutions-definition}
  [w]^2_s+\l \|w\|^2_{L^2(B)}-p \int_{B}u^{p-1}w^2\,dx \ge 0 \quad \text{for all $w \in \cH^s (B)$ with $\int_{B}u^p  w dx =0$.}  
\end{equation}
We note that this class of solutions include least energy solutions to  \eqref{eq:1.1}. Moreover, using the variational characterization~\eqref{eq:sig-kV-intro}, it is easy to see that (\ref{eq:ground-state-solutions-definition}) is equivalent to
\begin{equation}
  \label{eq:ground-state-solutions-definition-variant}
\s_2(-p u^{p-1}) \ge -\l
\end{equation}
Here we note that $V=-p u^{p-1}$ satisfies (\ref{main-assumption-V}) if $u \in \cH^s(B)$ solves (\ref{eq:1.1}), see Section~\ref{sec:nond-uniq-m_1} below. We have the following result, which provides an analogue of \cite[Theorems 3 and 4]{FLS} for the fractional Dirichlet problem (\ref{eq:1.1}) in the unit ball. 
\begin{theorem}\label{th-nondeg}
  Let  $s\in (0,1)$, $\l\geq 0$ and $1<p<2^*_s-1$.  Then \eqref{eq:1.1} possesses a unique ground state solution  $u\in \cH^s(B)$. Moreover $u$  is nondegenerate, i.e., the linearized problem
  \begin{equation}
    \label{eq:linearized-problem-intro}
    \Ds w + \l w - p u^{p-1}w = 0 \quad \text{in $B$,}\qquad u \equiv 0 \quad \text{on $\R^N \setminus B$}
  \end{equation}
only has the trivial solution $w \equiv 0$.   
\end{theorem}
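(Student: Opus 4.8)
The plan is to deduce Theorem~\ref{th-nondeg} from Theorem~\ref{main-theorem} applied to the potential $V = -p u^{p-1}$, following the variational strategy that links ground states of \eqref{eq:1.1} to the second radial eigenvalue of the linearized operator. First I would establish existence of a ground state: one minimizes the energy functional $\frac12 [w]_s^2 + \frac\l2 \|w\|_{L^2(B)}^2 - \frac{1}{p+1}\int_B |w|^{p+1}\,dx$ on the Nehari manifold (equivalently, over the mountain pass level), using the compact embedding $\cH^s(B) \hookrightarrow L^{p+1}(B)$ that holds because $p+1 < 2^*_s$; by the maximum principle for $\Ds$ and the Dirichlet condition, the minimizer can be taken positive, and by a symmetrization/Schwarz-rearrangement argument (or by uniqueness of the positive solution in each class) it is radial and radially decreasing. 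One then checks that any such ground state $u$ lies in $\cH^s(B)$ with $u \in C^\beta_{loc}(B)$ and $u > 0$ radially decreasing, so $V = -p u^{p-1}$ satisfies \eqref{main-assumption-V} with the relevant exponent $q$ (here I would invoke the regularity remark the paper promises in Section~\ref{sec:nond-uniq-m_1}).

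The heart of the argument is nondegeneracy, which I would prove first and then use to get uniqueness. Suppose $w \in \cH^s(B)$ solves the linearized problem \eqref{eq:linearized-problem-intro}. Decompose $w$ into radial and non-radial parts. For the non-radial part, I expect an elementary argument: testing against spherical harmonics and using that $u$ is radially decreasing together with the strict monotonicity of $V$ forces the angular components to vanish — more precisely, the first non-radial eigenvalue of $\Ds + V$ in $B$ lies strictly above $\s_2(V) = -\l$ (this is the fractional analogue of the classical fact that $\partial_{x_i} u$ would be the relevant competitor but is excluded by the Dirichlet condition; one argues via the quadratic form and the sign of $V$). Hence $w$ is radial. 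But then $w$ is a radial eigenfunction of $\Ds + V$ with eigenvalue $-\l = \s_2(V)$, so by the simplicity statement in Theorem~\ref{main-theorem}, $w$ is a multiple of $w_2$. Finally, since $u$ itself solves $\Ds u + \l u - u\cdot u^{p-1} = 0$, the function $u$ is a positive element of the kernel of $\Ds + V + \l$ restricted to... wait, that is not quite it: $u$ satisfies $\Ds u + V u = -\l u + (p-1) u^p \neq -\l u$ in general. The correct observation is that $u > 0$ cannot be proportional to the sign-changing $w_2$, so $w_2 \notin \ker$; rather, to conclude $w = 0$ one uses that equality in \eqref{eq:ground-state-solutions-definition}, i.e. $\s_2(V) = -\l$, is attained, and the standard fact that at a ground state the Morse index considerations force the kernel of the linearization to be trivial once the second eigenvalue is simple and the eigenfunction $w_2$ fails the constraint $\int_B u^p w_2 = 0$. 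So the key computation is to show $\int_B u^p w_2\,dx \neq 0$: since $u^p > 0$ in $B$ and $w_2$ changes sign exactly once (positive on $B_{r_0}$, negative outside), while $u^p$ is radially decreasing, one gets $\int_B u^p w_2\,dx > \big(\text{value of } u^p \text{ at } r_0\big)\int_B w_2\,dx$... — here I would use the ``transplantation'' inequality: because $u^p$ is decreasing and $w_2$ has the stated sign pattern, $\int_B u^p w_2 \,dx$ and $\int_B w_2\,dx$ have the same sign unless both vanish, and one shows $\int_B w_2\,dx \neq 0$ via property (II)/(I) equivalence from Theorem~\ref{main-theorem} combined with $w_2(0) \neq 0$. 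This forces $w_2$ out of the tangent space of the Nehari constraint, so $\ker = \{0\}$, i.e. $u$ is nondegenerate.

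For uniqueness I would argue by a connectedness/implicit-function-theorem continuation in the parameters $(s,\l,p)$, or more directly: if $u_1, u_2$ are two ground states, nondegeneracy of each makes them isolated in $\cH^s_{rad}(B)$; then a degree-theoretic or continuation argument along a path of nonlinearities (e.g. varying $\l$ from a value where uniqueness is classical, or using the known uniqueness for the local problem $s=1$ as the base point, noting the ground state depends continuously on $s$ by the results the paper develops) shows the number of ground states is constant equal to one. Alternatively, one can run the sharper argument: any two positive radial solutions $u_1 \le u_2$ (after ordering, via the radial decreasing structure) must coincide by a Lazer-type or Picone-type identity adapted to $\Ds$, using that both satisfy \eqref{eq:ground-state-solutions-definition}; the nondegeneracy rules out bifurcation so the solution set modulo this is a single point.

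The main obstacle I anticipate is the non-radial part of the nondegeneracy argument: showing the linearized operator $\Ds + \l - p u^{p-1}$ has no non-radial kernel. In the whole-space problem \eqref{eq:1.1RN} this is handled by explicitly using $\partial_{x_i} u$ as the relevant mode and Perron--Frobenius on the sphere, but on the ball the translates are not admissible (they violate the exterior Dirichlet condition), so one cannot directly exhibit the angular eigenfunction. The honest route is a spectral comparison: decompose $\cH^s(B)$ into fractional "spherical harmonic sectors" $\bigoplus_\ell \cH^s_\ell$, observe $\Ds + V$ acts within each sector, and prove that the bottom eigenvalue in every sector $\ell \ge 1$ is $> -\l = \s_2(V) = $ (bottom eigenvalue in the radial sector, second level). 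This monotonicity-in-$\ell$ of the bottom eigenvalue, which should follow because the sector-$\ell$ operator equals the radial operator plus a nonnegative angular "centrifugal" contribution in the Caffarelli--Silvestre extension picture, combined with the fact that $V$ radially nondecreasing puts the relevant radial eigenfunction in a favorable position, is the technical crux. I would expect the paper to isolate this as a separate lemma before proving Theorem~\ref{th-nondeg}.
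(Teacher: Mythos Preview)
Your radial nondegeneracy argument has a genuine gap. You propose to rule out $\sigma_2(V)=-\lambda$ by showing $\int_B u^p w_2\,dx\neq 0$, using a ``transplantation'' inequality against $\int_B w_2\,dx$. But this inequality goes the wrong way: with $u^p$ strictly decreasing and $w_2>0$ on $B_{r_0}$, $w_2<0$ outside, one only gets $\int_B u^p w_2\,dx > u^p(r_0e)\int_B w_2\,dx$, and since $\int_B w_2\,dx<0$ this lower bound is negative and says nothing about the sign of $\int_B u^p w_2\,dx$. In fact any radial kernel element automatically satisfies $\int_B u^p w\,dx=0$ (test the equations for $u$ and $w$ against each other), so you cannot hope to contradict this identity by such soft sign considerations alone. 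The paper's route is different: from $\int_B u^p w\,dx=0$ and the strict decrease of $u^{p-1}$ one deduces $\int_B u\,w\,dx<0$; on the other hand a fractional Pohozaev (integration-by-parts) identity yields
\[
2s\lambda\int_B u\,w\,dx \;=\; -\,\Gamma^2(1+s)\,|\partial B|\,\psi_u(1)\,\psi_w(1),
\]
and the Hopf-type conclusion $\psi_{w_2}(1)<0$ of Theorem~\ref{main-theorem} (together with $\psi_u(1)>0$) forces the right-hand side to be strictly positive. For $\lambda>0$ this gives the opposite sign of $\int_B u\,w\,dx$, a contradiction; for $\lambda=0$ the identity gives $\psi_u(1)\psi_w(1)=0$, again impossible. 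You never invoke the fractional Pohozaev identity or the boundary derivative $\psi_{w_2}(1)$, and without them the case $\lambda=0$ in particular cannot be closed.

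On the remaining points: the nonradial nondegeneracy is not proved here but imported from a companion paper, so your spherical-harmonic sketch is in the right spirit but is a separate, nontrivial result rather than an elementary step. Your uniqueness strategy (nondegeneracy plus continuation from a known regime) matches the paper's, which continues in $p$ from $p$ close to $1$ where uniqueness is already known.
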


We wish to mention some further results related to this theorem. For the full space  problem \eqref{eq:1.1RN}, uniqueness up to translation in the class of all positive solutions is, up to now,  only known for $N=1$, $s=1/2$ and $p=2$, see Amick and Toland \cite{AT}. This stands in striking contrast to the local case $s=1$, in which Kwong has proved uniqueness of positive solutions for the corresponding versions of \eqref{eq:1.1RN} and \eqref{eq:1.1} with the help of an ODE analysis. We point out that ODE methods are not applicable in the present nonlocal setting.

For the Dirichlet problem \eqref{eq:1.1} in a ball, only very recent results are available. In particular, it has been proved in \cite{DIS} that \eqref{eq:1.1} admits a unique solution which is nondegenerate if $s$ and $p$ belongs to a borderline range of parameters. More precisely, it is assumed in \cite{DIS} that $s$ is close to $1$ or $p$ is close to $1$ or $2^{*}_s-1$. Moreover, very recently in \cite{DIS-1}, it is shown, by a compactness argument based on the uniqueness result of \cite{FLS} for (\ref{eq:1.1RN}), that \eqref{eq:1.1} with $\lambda>0$ admits a unique ground state solution if $B$ is replaced with a sufficiently large ball. In our very  recent paper  \cite{FW-uniq1D}, we have proved Theorem \ref{th-nondeg} in the special case $N=1$. Moreover, also in \cite{FW-uniq1D}, we have shown unique solvability of the fractional one-dimensional Lane-Emden equation, i.e., of \eqref{eq:1.1} in the special case $N=1$ and $\l=0$, within the class of {\em all} positive solutions. Also very recently and independently, the assertion of Theorem \ref{th-nondeg} was shown in \cite{Azahara-Parini} in the special case $\lambda = 0$.  

We point out that  our argument to derive Theorem~\ref{th-nondeg} from Theorem~\ref{main-theorem} is different from the one in \cite{FLS} since we need to deal with boundary terms arising when applying a fractional integration by parts formula. A useful tool is the nonradial nondegeneracy of positive solutions of \eqref{eq:1.1} which we establish in \cite{FW-uniq1D} for the full range of parameters $s \in (0,1)$, $1 < p< 2^*-1$, see also \cite{DIS-1} for a different and independent proof. The remaining part of the proof then uses Theorem~\ref{main-theorem} and a fractional integration by parts formula. The key new information needed in the case $\lambda >0$ is the fact that second radial eigenfunctions $w$ associated with the potential function $V=-p u^{p-1}$ and eigenvalues $\s \le 0$ change sign precisely once in the radial variable. In the case $N=1$, this property can be deduced from the nonradial nondegeneracy result mentioned above. In fact, in the case $N=1$, this property can be used to show that the $s$-harmonic extension $W$ of $w$, as defined in Section~\ref{sec:preliminaries} below, has the same number of nodal domains as $w$ when regarded as a function of the radial variable, see \cite{FW-uniq1D} for details. A similar result is not available in the case $N>1$, therefore we rely on Theorem~\ref{main-theorem}. The case $\l = 0$ in Theorem~\ref{th-nondeg} is different. In this case, fractional integration by parts shows that nonzero radial solutions of the linearized equation (\ref{eq:linearized-problem-intro}) must have a vanishing fractional normal derivative at the boundary $\partial B$. Therefore, the existence of such solutions can be ruled out by a fractional Hopf boundary point lemma for second radial eigenfunctions. We shall derive such a result in Proposition~\ref{key-characterization-fractional derivative} below as a consequence of a more general new Hopf type lemma for supersolutions of an extended problem (in a nonradial setting). This new Hopf type lemma is given in Theorem 5.2 in the appendix, and its proof is partly inspired by the proof of \cite[Lemma 5.10]{FW-uniq1D}. We also note that, independently and differently, a fractional Hopf boundary point lemma for second radial eigenfunctions associated with the potential function $V=-p u^{p-1}$ has been proved in \cite{Azahara-Parini}.

The paper is organized as follows.  In Section  \ref{sec:preliminaries} we collect some useful information concerning convergence of eigenvalues and some nodal domain estimates. In Section \ref{sec:proof-theor-refm}, we prove simplicity of second eigenfunction and their precise nodal domain estimates.  The proof of Theorem~\ref{th-nondeg} is given in Section \ref{sec:nond-uniq-m_1}.  In Section \ref{sec:hopf-type-boundary} we state and prove the new Hopf-type lemma mentioned above.  We finally collect  some topological results on curve intersection in Section \ref{sec:appendix} which are useful to estimate the number of sign changes of radial second eigenfunctions.

 \section{Preliminaries}
 \label{sec:preliminaries}

 Let $\O$ be an open bounded set of class  $C^{1,1}$, and let
 $$
 \cH^s(\Omega):= \{v \in H^s(\R^N) \:: \: v \equiv 0 \text{ in $\R^N \setminus \Omega$} \}.
 $$
 We need the following uniform regularity result.

\begin{lemma}\label{lem:reg-bdr-ok}
  Let $\O$ be as above, let  $V,F\in L^q(\O)$ with $q>\max(N/(2s),1)$, and let $u \in \cH^s(\Omega)$ satisfy $\Ds u+V u=F$ in $\O$ in weak sense, i.e.,
  $$
  [u,v]_s + \int_{\Omega}V u v\,dx = \int_{\Omega}F v\,dx \qquad \text{for all $v \in \cH^s(\Omega)$.}
  $$
  Moreover, let $c_0>0$. 
\begin{enumerate}
\item[(i)] If $\|V\|_{L^q (\O)} \leq c_0$,
then there  exist $\a=\a(N,s,q,c_0)>0$ and $C=C(N,s,q,c_0)>0$ with
\be\label{eq:uspsiHold} 
  \|u\|_{C^\a(\R^N)} \leq C( \|u\|_{L^2(\O)}+\|F\|_{L^q(\O)}).
  \ee
\item[(ii)] If  $F,V\in L^p(\Omega)$, with $p>\frac{N}{s}$  and   $\|V\|_{L^p (\O)} \leq c_0$, then there exists $C=C(N,s,p,c_0)>0$ with 
\be\label{eq:uspsiHold-psi} 
  \|u\|_{C^s(\R^N)}+  \|u/d^s\|_{  C^{s-\frac{N}{p}}(\overline \O) }\leq C( \|u\|_{L^2(\O)}+\|F\|_{L^p(\O)}),
\ee
where $d(x):=\text{dist}(x,\R^N\setminus\O)$.
\item[(iii)]  If $F,V\in C^\b_{loc}(\Omega)$ with $\b>0$ and $2s+\b\not\in \N$, then $u\in C^{2s+\b}_{loc}(\O)$.  
\end{enumerate}
\end{lemma}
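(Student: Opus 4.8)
The plan is to prove Lemma~\ref{lem:reg-bdr-ok} as a fairly standard combination of interior and boundary Schauder/Calder\'on--Zygmund type estimates for the fractional Laplacian, bootstrapping from an initial $L^\infty$ bound. First I would observe that $F - Vu \in L^{q_1}(\Omega)$ with $q_1 = q$ at least, and that by the Sobolev embedding $\cH^s(\Omega) \hookrightarrow L^{2^*_s}(\Omega)$ together with H\"older's inequality we can write $\Ds u = g_1 := F - Vu$ with an a priori $L^{r}$ bound on $g_1$ controlled by $\|u\|_{L^2(\Omega)} + \|F\|_{L^q(\Omega)}$ (using $\|V\|_{L^q} \le c_0$). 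For part (i): applying known $L^p$-to-$C^\alpha$ regularity for the Dirichlet problem $\Ds u = g$, $u \equiv 0$ in $\R^N\setminus\Omega$ (e.g.\ the global H\"older estimates of Ros-Oton--Serra, or the results in Leonori--Peral--Primo--Vazquez), one upgrades $u \in L^2$ first to $u \in L^\infty(\R^N)$ and then to $u \in C^\alpha(\R^N)$; the exponent $\alpha$ and constant depend only on $N,s,q,c_0$. If $q > N/(2s)$ barely holds one may need one iteration of the bootstrap (bounded $u$ $\Rightarrow$ $Vu \in L^q$ $\Rightarrow$ H\"older $u$), but this is routine.

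For part (ii), once $\|V\|_{L^p}\le c_0$ with $p > N/s$, the right-hand side $g_1 = F - Vu$ lies in $L^p(\Omega)$ with norm controlled by $\|u\|_{L^\infty} + \|F\|_{L^p}$, and $\|u\|_{L^\infty}$ is already controlled by part (i) (note $p > N/s > N/(2s)$, so (i) applies). Then I would invoke the fine boundary regularity theory: for $\Ds u = g \in L^p(\Omega)$, $p > N/s$, with $u\equiv 0$ outside a $C^{1,1}$ domain, one has $u \in C^s(\R^N)$ and $u/d^s \in C^{s - N/p}(\overline\Omega)$ with the corresponding estimate — this is precisely Proposition~1.1 / Theorem~1.2 in Ros-Oton--Serra's work on the Dirichlet problem and its $L^p$-variant. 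The constant depends only on $N,s,p$, the $C^{1,1}$-character of $\Omega$, and $c_0$. I would simply cite this and combine it with the bound on $g_1$.

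For part (iii), the interior regularity is a purely local statement: away from $\partial\Omega$, $u$ is bounded (by (i)), hence $Vu \in C^\beta_{loc}(\Omega)$ since $V \in C^\beta_{loc}$ and, once we know $u\in C^\gamma_{loc}$ for small $\gamma$, the product stays H\"older; so $g_1 = F - Vu \in C^\beta_{loc}(\Omega)$. Applying interior Schauder estimates for the fractional Laplacian (Ros-Oton--Serra, or Silvestre) — namely $\Ds u = g \in C^\beta_{loc}$ with $u$ bounded implies $u \in C^{2s+\beta}_{loc}$, valid when $2s+\beta \notin \N$ — and bootstrapping once if $2s+\beta > 1$ is needed to close the H\"older regularity of the product $Vu$, one obtains $u \in C^{2s+\beta}_{loc}(\Omega)$.

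The only genuinely delicate point is making sure the constants in parts (i) and (ii) are \emph{uniform}, i.e.\ depend on $\Omega$ only through its $C^{1,1}$ constants and not on finer features, and that they depend on $V$ only through the bound $c_0$ on the relevant $L^q$ (resp.\ $L^p$) norm; this is why the initial $L^\infty$ bootstrap must be done carefully so that each intermediate estimate tracks the explicit dependence. Once the correct citations for the scale-invariant boundary and interior estimates are in place, the rest is bookkeeping: the proof is essentially "apply the known regularity theory to the rewritten equation $\Ds u = F - Vu$ and iterate." I expect the writing to consist mainly of identifying the exponents in the bootstrap and pointing to the precise external results.
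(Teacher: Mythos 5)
Your proposal is correct and follows essentially the same route as the paper: the paper disposes of (i) and (ii) by citing the regularity estimates for nonlocal Schr\"odinger equations in \cite{Fall-reg-1} (which encapsulate exactly the $L^2\to L^\infty\to C^\alpha$ bootstrap and the Ros-Oton--Serra boundary theory you describe), and obtains (iii) from the interior estimates of \cite{RS16a} together with a bootstrap, needed only when $2s<1$, to close the H\"older regularity of the product $Vu$. The only difference is one of packaging — you reassemble the bootstrap by hand from the $\Ds u=g$ estimates, whereas the paper cites a reference where this has already been carried out with the stated uniform dependence on $\|V\|_{L^q}\le c_0$.
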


\begin{proof}
By \cite{Fall-reg-1}, we have  \eqref{eq:uspsiHold} and \eqref{eq:uspsiHold-psi}.    Now by interior regularity from \cite{RS16a}   (and a bootstrap argument only necessary for $2s<1$),  we obtain $(iii)$.  
%
\end{proof}

The following is also a consequence of Lemma~\ref{lem:reg-bdr-ok}.

\begin{lemma}\label{lem:vonve-eig-eig}
  Let $q>\max(\frac{N}{2s},1)$, and let $V, V_n \in  L^q(B)$, $n \in \N$ be radial functions satisfying $V_n\to V$ in $L^q(B)$ as $n \to \infty$.  Then $\s_k(V_n)\to \s_k(V)$. Suppose moreover that $\s_k(V)$ is simple, and let $v$ be an  eigenfunction associated to  $\s_k(V)$. Then any sequence $(v_n)_n$ of eigenfunctions $v_n$ associated to $\s_k(V_n;B)$, normalized such that $\|v_n\|_{L^2(B)}=1$, possesses a subsequence that converges in $C(\ov B)\cap\cH^s(B)$ to $\k v$, for some $\k\in \R\setminus\{0\}$.
\end{lemma}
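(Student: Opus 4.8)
The plan is to establish Lemma~\ref{lem:vonve-eig-eig} in three stages: first the convergence of eigenvalues, then the compactness of normalized eigenfunctions, and finally the identification of the limit.

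\textbf{Step 1: Convergence of the eigenvalues.} I would argue directly from the min-max characterization \eqref{eq:sig-kV-intro}. The key analytic input is that the map $W \mapsto \|w\|^2_{L^2(B;Wdx)}$ depends continuously on $W \in L^q(B)$, uniformly on bounded subsets of $\cH^s_{rad}(B)$: indeed, for $w$ with $[w]_s^2 + \|w\|_{L^2(B)}^2 \le 1$ we have, by H\"older's inequality with exponent $q'$ and the continuous Sobolev embedding $\cH^s(B)\hookrightarrow L^{2q'}(B)$,
\[
\Bigl| \int_B (V_n - V)\, w^2\, dx\Bigr| \le \|V_n - V\|_{L^q(B)} \|w\|_{L^{2q'}(B)}^2 \le C \|V_n - V\|_{L^q(B)}.
\]
Hence the Rayleigh quotients for $V_n$ and $V$ differ by at most $o(1)$ uniformly over $k$-dimensional subspaces $\cS$ (after also using that $\|w\|_{L^2(B)}$ is bounded below on the unit sphere of a finite-dimensional subspace only after normalization — so one works on the normalized quotient), which forces $\s_k(V_n)\to\s_k(V)$. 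A clean way to organize this is: given $\eps>0$, pick an optimal (or near-optimal) $k$-dimensional subspace $\cS$ for $\s_k(V)$; the uniform estimate above shows $\sup_{w\in\cS\setminus\{0\}}$ of the $V_n$-quotient is $\le \s_k(V)+\eps$ for $n$ large, giving $\limsup \s_k(V_n)\le \s_k(V)$; the reverse inequality follows symmetrically.

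\textbf{Step 2: Compactness of the eigenfunctions.} Let $v_n$ be an eigenfunction for $\s_k(V_n)$ with $\|v_n\|_{L^2(B)}=1$. Testing the weak equation with $v_n$ gives $[v_n]_s^2 = \s_k(V_n) - \int_B V_n v_n^2\,dx$, and the right-hand side is bounded because $\s_k(V_n)$ converges (Step 1) and $|\int_B V_n v_n^2| \le \|V_n\|_{L^q}\|v_n\|_{L^{2q'}}^2 \le C(1+[v_n]_s^2)$; absorbing, we get $[v_n]_s \le C$. Thus $(v_n)$ is bounded in $\cH^s(B)$, hence (up to a subsequence) converges weakly in $\cH^s(B)$, strongly in $L^2(B)$ and a.e.\ to some $v$, which is radial and satisfies $\|v\|_{L^2(B)}=1$. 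To upgrade to $C(\overline B)$ convergence and to pass to the limit in the equation, I would invoke Lemma~\ref{lem:reg-bdr-ok}(i): writing the equation as $\Ds v_n = F_n$ with $F_n := \s_k(V_n) v_n - V_n v_n \in L^q(B)$, one checks $\|F_n\|_{L^q(B)}$ is bounded (using the $L^{2q'}$ bound on $v_n$ and boundedness of $V_n$ in $L^q$, plus $\|v_n\|_{L^2}=1$), so \eqref{eq:uspsiHold} gives a uniform $C^\a(\R^N)$ bound; by Arzel\`a--Ascoli a subsequence converges in $C(\overline B)$, and the limit must be $v$. Passing to the limit in the weak formulation (the term $\int_B V_n v_n \varphi \to \int_B V v\varphi$ by $V_n\to V$ in $L^q$ and $v_n\to v$ in $L^{2q'}$, which holds up to a subsequence from the $\cH^s$ bound and compact embedding) shows $v$ is an eigenfunction for $\s_k(V)$. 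Finally, strong convergence in $\cH^s(B)$ follows from $[v_n]_s^2 = \s_k(V_n) - \int_B V_n v_n^2 \to \s_k(V) - \int_B V v^2 = [v]_s^2$ together with weak convergence.

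\textbf{Step 3: Identification of the limit and the constant $\k$.} Since $\s_k(V)$ is simple with eigenfunction $v$ (the one fixed in the statement), and the limit $\tilde v$ of $(v_n)$ from Step 2 is an eigenfunction for $\s_k(V)$ with $\|\tilde v\|_{L^2(B)}=1$, simplicity forces $\tilde v = \k v$ for some scalar $\k$ with $|\k| \,\|v\|_{L^2(B)} = 1$, in particular $\k\ne 0$. This is exactly the claimed conclusion.

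\textbf{Main obstacle.} The genuinely delicate point is not any single estimate but the bookkeeping of the uniform $L^q$-to-quadratic-form continuity in Step 1 — one must be careful that the min-max bound transfers uniformly over all competing subspaces, which requires the normalized Rayleigh quotient and the continuous embedding $\cH^s(B)\hookrightarrow L^{2q'}(B)$ with $2q' < 2^*_s$ — and, in Step 2, ensuring that $\|F_n\|_{L^q(B)}$ is genuinely bounded so that Lemma~\ref{lem:reg-bdr-ok}(i) applies with a uniform constant $c_0$ (this uses that $\|V_n\|_{L^q(B)}$ is bounded, which holds since $V_n\to V$ in $L^q(B)$). Everything else is a routine weak-convergence and Arzel\`a--Ascoli argument.
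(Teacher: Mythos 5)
Your argument follows essentially the same route as the paper's proof: H\"older's inequality together with the Sobolev embedding $\cH^s(B)\hookrightarrow L^{2q'}(B)$ (using $2<2q'<2^*_s$) to control the potential term uniformly, the min-max characterization \eqref{eq:sig-kV-intro} for the eigenvalue convergence, weak compactness plus the compact embedding \eqref{eq:compact-embedding} to extract a limit, simplicity of $\s_k(V)$ to identify it as $\k v$, convergence of $[v_n]_s^2$ to upgrade to strong $\cH^s$ convergence, and Lemma~\ref{lem:reg-bdr-ok} for the convergence in $C(\ov B)$. One step in your Step 2 is imprecise as written: from $[v_n]_s^2=\s_k(V_n)-\int_B V_n v_n^2\,dx$ and $\bigl|\int_B V_n v_n^2\,dx\bigr|\le C\bigl(1+[v_n]_s^2\bigr)$ you cannot ``absorb'' unless the constant multiplying $[v_n]_s^2$ is strictly less than $1$, which you have not arranged. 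The standard fix is the interpolation inequality $\|w\|_{L^{2q'}(B)}^2\le \eps\, [w]_s^2+C_\eps\|w\|_{L^2(B)}^2$ (valid because $2<2q'<2^*_s$), applied with $\eps$ so small that $\eps\,\sup_n\|V_n\|_{L^q(B)}\le \tfrac12$; with that correction the rest of your proof goes through.
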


\begin{proof}
Let $b\in L^q(B) $. Since $q>\max(\frac{N}{2s},1)$, we have $2<2q'<\Bigl[\frac{2N}{N-2s}\Bigr]_+$, with $q'=\frac{q}{q-1}$  and therefore, by H\"older and Sobolev inequalities, we have 
\be \label{eq:Coercive}
\int_{B} b u^2\,dx\leq \|b\|_{L^q(B)} \|u\|_{L^{2q'}(B)}^2 \le C \|b\|_{L^q(B)} [u]^2_s  \qquad\textrm{for all $u\in \cH^s(B)$}
\ee
with a constant $C=C(N,s,q)>0$. Since $\|v_n\|_{L^2(B)}=1$ for all $n\in \N$, we deduce from \eqref{eq:sig-kV-intro} and \eqref{eq:Coercive} that   
$$
\s_k(V_n)\leq  \s_k(V)+  C \|V_n-V\|_{L^q(B)}  \s_k(0), \quad \s_k(V)\leq  \s_k(V_n)+  C \|V_n-V\|_{L^q(B)}  \s_k(0).
$$
As a consequence, $\s_k(V_n)\to \s_k(V)$ as $n\to \infty$. In particular, this implies that $(v_n)_n$ is bounded in $\cH^s(B)$.  Therefore, by (\ref{eq:compact-embedding}), $(v_n)_n$ converges, up to a subsequence, weakly in $\cH^s(B)$ and strongly in $L^{2q'}(B)$, hence also strongly in $L^2(B,(1+|V|)dx)$. Moreover, by weak convergence, the limit $w$ satisfies
$$
[w,\phi]_s + \int_B V w \phi\,dx = \s_k(V) \int_B w \phi \,dx \qquad \text{for all $\phi \in \cH^s(B)$,}
$$
so $w$ is an eigenfunction of (\ref{eq:1.1-Lin}) corresponding to the eigenvalue $\s_k(V)$. Hence, since $\s_k(V)$ is simple by assumption, we have $w = \k v$ for some $\k\in \R\setminus\{0\}$. In particular, this implies that
$$
[\k v]_s^2 =[w]_s^2 =  \int_B (\s_k(V)- V) w^2\,dx = \lim_{n \to \infty}\int_B (\s_k(V_n)- V_n) v_n^2\,dx = \lim_{n \to \infty}[v_n]_s^2,
$$
and from this and the weak convergence we deduce that $v_n\to \k v$ strongly in $\cH^s(B)$.  Applying Lemma \ref{lem:reg-bdr-ok} we deduce that $v_n\to \k v$ in $C(\ov B)$.
\end{proof}

In the following, we need to consider the \textit{$s$-harmonic extension} $W$ of a function $w \in \cH^s(B)$, which has been introduced in \cite{CaSi} and is sometimes called the Caffarelli-Silvestre extension. We define $\R^{N+1}_+=\{(x,t)\in \R^N\times \R\,:\, t>0\}$. For $w\in L^\infty(\R^N)\cap C(\R^N)$, we define 

\be \label{eq:Posso-Ker}
 W(x,t)=p_{N,s} t^{2s}\int_{\R^N}\frac{w(y) dy}{(t^2+|x-y|^2)^{\frac{N+2s}{2}}} \qquad\textrm{ with } \qquad \frac{1}{p_{N,s}}={\int_{\R^N}\frac{dy}{(1+|y|^2)^{\frac{N+2s}{2}}}},
\ee
Then we have  
 \begin{align}\label{eq:extens1}
\begin{cases}
\div(t^{1-2s}\n W)= 0& \quad\textrm{ in $\R^{N+1}_+$,}\\
W\in C(\ov{\R^{N+1}_+}),&\\
\lim \limits_{t\to 0} W(x,t)=w (x)& \quad\textrm{ for  $x\in \R^N$}.
\end{cases}
\end{align}
In this case, we call $W$ the \textit{$s$-harmonic extension} of $w$.
If moreover $\O$ is an open  subset of $\R^N$ and $w \in C^{2s+\a}(\O)$ for some $\a>0$,  then  $(x,t) \mapsto t^{1-2s}\de_t W(x,t)\in C(\O\times [0,\infty))$ and 
\be\label{eq:Ext-Ds-lim} 
-\lim_{t\to  0}t^{1-2s}\de_t W (x,t)=a_s \Ds w(x)\qquad\textrm{ for all $x\in \O$}
\ee
with some (explicit) positive constant $a_s$, where $\Ds w(x)$ is defined pointwisely by (\ref{eq:def-fractional-laplace}).

\begin{remark}\label{eq:minextens}
  Let $D^{1,2}(\R^{N+1}_+;t^{1-2s})$ be the completion of $C^\infty_c(\ov{\R^{N+1}_+})$ with respect to the norm
  \begin{equation}
    \label{eq:norm-d-1-2-weighted}
  U \mapsto \|U\|_{D^{1,2}(\R^{N+1}_+;t^{1-2s})}^2 = \int_{\R^{N+1}_+}|\n U|^2 t^{1-2s}\, dxdt .
  \end{equation}
If $w\in H^s(\R^N)$ is fixed, then the functional in (\ref{eq:norm-d-1-2-weighted}) admits a unique minimizer in the affine subspace of functions $U\in D^{1,2}(\R^{N+1}_+;t^{1-2s})$ satisfying $U=w$ on $\R^N = \partial \R^{N+1}_+$ in trace sense.
This minimizer $W \in D^{1,2}(\R^{N+1}_+;t^{1-2s})$ is also called the $s$-harmonic extension of $w$, and it satisfies
\be \label{eq:CS}
\int_{\R^{N+1}_+}t^{1-2s}\n W\cdot \n\vp \, dtdx=a_s [ w ,\vp(\cdot,0)]_s  \quad \text{for  all $\varphi \in D^{1,2}(\R^{N+1}_+;t^{1-2s})$.}
\ee
Moreover, if, in addition, $w \in L^\infty(\R^N) \cap C(\R^N)$, then $W$ coincides with the $s$-harmonic extension defined pointwisely by \eqref{eq:Posso-Ker} above.
\end{remark}

If $w \in \cH^s(B)$ is an eigenfunction of (\ref{eq:1.1-Lin}), then, by Lemma ~\ref{lem:reg-bdr-ok} and the remarks above, (\ref{eq:extens1}), \eqref{eq:Ext-Ds-lim} and \eqref{eq:CS} are true for the $s$-harmonic extension $W$ of $w$, which then is contained in the space
$$
D^{1,2}_B(\RNp;t^{1-2s})=\{U\in D^{1,2}(\RNp;t^{1-2s}) \,:\, U(\cdot,0)=0 \textrm{ on $\R^N\setminus B$}\}.
$$
Moreover, if $w$ is radial, then the function $W$ is radial in the $x$-variable.  In the following, we need some information on the nodal structure of $W$ in the case where $w=w_2$ is a second eigenfunction of (\ref{eq:1.1-Lin}).

\begin{definition}
\label{def-nodal-domain-extension}  
Let $W \in C(\overline{\R^{N+1}_+})$. We call a subset 
$\cO \subset \overline{\R^{N+1}_+}$ a nodal domain of $W$ if $\cO$ is a connected component of the set $\{(x,t) \in \overline{\R^{N+1}_+}\::\: W(x,t) \not = 0\}$.
\end{definition}

We first note the following result which is essentially contained in \cite{FLS}.

\begin{lemma}
  \label{lem-nod-dom-simple}
Let $V \in L^q(B)$, with $q>\max(\frac{N}{2s},1)$, be a radial function, let $w_2 \in \cH^s_{rad}(B)$ be an eigenfunction of (\ref{eq:1.1-Lin}) corresponding to the eigenvalue $\s_2=\s_2(V)$, and let $W_2$ be its $s$-harmonic extension. Then $W_2$ has precisely two nodal domains. More precisely, the sets $\{(x,t) \in \overline{\R^{N+1}_+}\::\: \pm W_2>0\}$ are connected, nonempty and intersect the set $B \times \{0\}$. 
\end{lemma}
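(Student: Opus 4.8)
The plan is to prove the statement in two parts: first that $W_2$ has at most two nodal domains, which is the essential content and uses the variational (min-max) characterization of $\s_2(V)$; then that it has at least two nodal domains, which is elementary since $w_2$ changes sign. For the upper bound, I would argue by contradiction: suppose $W_2$ has at least three nodal domains $\cO_1,\cO_2,\cO_3$. Since $W_2 \in D^{1,2}_B(\RNp;t^{1-2s})$ and each restriction $W_2 \chi_{\cO_i}$ again lies in $D^{1,2}_B(\RNp;t^{1-2s})$ (one checks that truncation at a nodal set does not create singular gradient contributions, using that $W_2$ is continuous and vanishes on the boundary of the nodal domain), the traces $u_i := (W_2 \chi_{\cO_i})(\cdot,0)$ are elements of $\cH^s(B)$ with pairwise disjoint supports. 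Using \eqref{eq:CS} together with the fact that $W_2 \chi_{\cO_i}$ solves the extension equation inside $\cO_i$ with the correct boundary behavior, one gets, for each $i$,
\[
[u_i]_s^2 + \int_B V u_i^2\,dx = \s_2(V)\,\|u_i\|_{L^2(B)}^2,
\]
so each $u_i$ (if nonzero) realizes the Rayleigh quotient at the value $\s_2(V)$. Now here is the subtlety that forces the radial setting into play: the $u_i$ need not be radial even though $W_2$ is radial in $x$, because a nodal domain of a radial function in $\RNp$ need not be radially symmetric. So one cannot directly plug a three-dimensional span of the $u_i$ into \eqref{eq:sig-kV-intro}, which uses $\cH^s_{rad}(B)$. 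The resolution, following \cite{FLS}, is that a radial continuous function on $\R^N$ (or on $\RNp$) whose nodal set we look at: if $W_2$ were to have three or more nodal domains, then \emph{as a function of the radial variable} $w_2$ would have to change sign at least twice, hence $w_2$ would already supply (after radializing appropriately, or by directly restricting to rays) a radial competitor; but more cleanly, one uses the extension: the sets $\{\pm W_2 > 0\}$ being connected is exactly what needs proof, and the connectedness is inherited from a monotonicity/topological structure — a radial function on $\RNp$ whose extension has a disconnected positivity set would, restricted to the half-plane spanned by one ray and the $t$-axis, give a planar function with a disconnected positivity set, and then a reflection/unique continuation argument across $\partial B$ (the extension satisfies a degenerate elliptic equation with no zeroth-order term outside $B \times \{0\}$) rules out extra components.

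Concretely, the cleanest route I would actually take is: reduce to the half-plane. Writing $W_2(x,t) = \widetilde W_2(|x|,t)$ for a function $\widetilde W_2$ on the quarter-plane $Q := \{(r,t): r \ge 0, t \ge 0\}$, the claim "$W_2$ has exactly two nodal domains in $\RNp$" is equivalent to "$\widetilde W_2$ has exactly two nodal domains in $Q$" (connectedness in $Q$ lifts to connectedness in $\RNp$ by rotational invariance, and conversely). Now $\widetilde W_2$ satisfies a degenerate elliptic PDE in the interior of $Q$, vanishes on $\{r > 1, t = 0\}$, and has a Neumann-type condition on $\{r = 0\}$ (radial symmetry). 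The eigenvalue equation on the boundary piece $\{0 \le r < 1, t = 0\}$ is, after substituting \eqref{eq:Ext-Ds-lim}, the relation $-\partial_t(t^{1-2s}\widetilde W_2)\big|_{t=0} = a_s(\s_2 - V) w_2$ there. If $\widetilde W_2$ had three nodal domains, pick three of them, form the truncations, and obtain three linearly independent functions in $\cH^s_{rad}(B)$ (radial because they come from functions on $Q$) each achieving Rayleigh quotient $\s_2(V)$; plugging their span into \eqref{eq:sig-kV-intro} with $k=3$ gives $\s_3(V) \le \s_2(V)$. But also each such truncation, being an eigenfunction of $\Ds + V$ with eigenvalue $\s_2$ that vanishes on a set of positive measure in $B$, must vanish identically by the strong unique continuation principle for the fractional Laplacian — contradiction. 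Thus at most two nodal domains. (Alternatively, one shows $\s_3(V) \le \s_2(V)$ would force $\s_2 = \s_3$ and still derive a UCP contradiction, or uses Courant's nodal domain theorem in this nonlocal-via-extension form as in \cite{FLS}.)

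For the lower bound and the refined statement: $w_2$ is $L^2$-orthogonal to the positive first eigenfunction $w_1$ by \eqref{eq:sig-kV-intro-s-2-alternative}, hence $w_2$ must change sign in $B$; therefore $W_2(\cdot,0) = w_2$ takes both signs on $B \times \{0\}$, so both sets $\{(x,t): \pm W_2 > 0\}$ are nonempty and meet $B \times \{0\}$. Combined with the upper bound, $W_2$ has exactly two nodal domains, and since $W_2$ takes both signs, the two nodal domains are precisely $\{W_2 > 0\}$ and $\{W_2 < 0\}$, each connected. This gives the displayed refined conclusion.

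\medskip

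\noindent\textbf{Main obstacle.} The crux is the passage from "three nodal domains of $W_2$" to a genuine \emph{radial} three-dimensional competitor in \eqref{eq:sig-kV-intro}: one must verify that the truncated extensions $W_2\chi_{\cO_i}$ lie in $D^{1,2}_B(\RNp;t^{1-2s})$ (no spurious distributional gradient on the nodal boundary — this uses continuity of $W_2$ and that the nodal set has the right capacity/measure properties with the weight $t^{1-2s}$), that their boundary traces are radial and supported in $B$, and that they individually satisfy the eigenvalue relation at level $\s_2$ (integration by parts via \eqref{eq:CS}, using that $\div(t^{1-2s}\nabla W_2) = 0$ in each $\cO_i$ and the behavior \eqref{eq:Ext-Ds-lim} on $B\times\{0\}$). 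The technical weight-related estimates and the fractional unique continuation input are where the real work lies; once these are in place the min-max argument is immediate. I expect the authors to invoke \cite{FLS} for the bulk of this, since they state the lemma is "essentially contained" there.
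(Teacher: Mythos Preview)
Your overall plan matches the paper's: both set up the variational characterization of $\s_2$ in the extended space $D^{1,2}_B(\R^{N+1}_+;t^{1-2s})$ and invoke the Courant-type argument of \cite[Prop.~5.2]{FLS} for ``at most two nodal domains'', then use the sign change of $w_2$ (from $L^2$-orthogonality to $w_1$) for ``at least two'' and for the intersection with $B\times\{0\}$.  One genuine technical error in your outline, however: the displayed identity $[u_i]_s^2 + \int_B V u_i^2\,dx = \s_2\,\|u_i\|_{L^2(B)}^2$ is false as written.  The truncation $U_i := W_2\chi_{\cO_i}$ is \emph{not} the $s$-harmonic extension of its trace $u_i$, so one only has $a_s[u_i]_s^2 \le \int_{\R^{N+1}_+} t^{1-2s}|\nabla U_i|^2\,dxdt$, typically with strict inequality; worse, the cross terms $[u_i,u_j]_s$ for disjointly supported $u_i,u_j$ do \emph{not} vanish in the nonlocal setting, so you cannot run the min-max \eqref{eq:sig-kV-intro} directly on $\mathrm{span}\{u_1,u_2,u_3\}\subset\cH^s_{rad}(B)$.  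The Courant argument must be carried out entirely in the extended space, where supports are genuinely disjoint and the extended Rayleigh quotient of each $U_i$ equals $a_s\s_2$ exactly (by testing \eqref{eq:CS} with $\Phi=U_i$).  You effectively acknowledge this in your parenthetical alternative and in your ``Main obstacle'' paragraph, and it is precisely why the paper formulates the variational characterization for extended functions before citing FLS.  With that correction your route coincides with the paper's; your final step (sign change of $w_2$ already forces both $\{\pm W_2>0\}$ to meet $B\times\{0\}$) is in fact simpler than the paper's separate contradiction argument for that point.
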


\begin{proof}
Recalling  Remark \ref{eq:minextens} and \eqref{eq:CS},  we have the variational characterization
$$
\s_2(g) a_s=\frac{ \int_{\R^{N+1}_+}|\n W_2|^2 t^{1-2s }dtdx- a_s \int_B W_2^2 Vdx}{\int_{B}  W^2_2 dx } =\inf_{U\in M} \frac{ \int_{\R^{N+1}_+}|\n U|^2 t^{1-2s }dtdx- a_s\int_B U^2 V dx }{\int_{B} U^2  dx } ,
$$
where 
$$
M= \left\{  U\in D^{1,2}_B(\R^{N+1}_+;t^{1-2s})\setminus \{0\}\quad:
 \quad \int_{B} U  W_1  dx=0,\quad \textrm{$U(\cdot,t)$ is radial}  \right\} 
 $$
and  $W_1$ is the $s$-harmonic extension of $w_1$, which achieves the infinimum
 $$
 \inf_{U\in D^{1,2}_B(\R^{N+1};t^{1-2s})} \frac{ \int_{\R^{N+1}_+}|\n U|^2 t^{1-2s }dtdx- a_s\int_B U^2 V dx }{\int_{B} U^2  dx }.
 $$
 By the same argument as in \cite[Prop. 5.2]{FLS}, it then follows that $W_2$ has at most two nodal domains in $\R^{N+1}_+$.  Since $w_2=W_2(\cdot,0)$ changes sign and $W_2\in C(\ov{\R^{N+1}_+})$,  we see that $W_2$ has precisely two nodal domains $\{W_2>0\}$ and $\{W_2<0\}$ in $\ov{\R^{N+1}_+}$.
 To see that these nodal domains intersect the set $B \times \{0\}$, we argue by contradiction and suppose that $\{W_2>0\}\cap B \times \{0\}= \varnothing$. Then $\varphi = W_2 1_{\{W_2>0\}} \equiv 0$ on $\R^N \times \{0\}$, and by Remark \ref{eq:minextens}, we may use \eqref{eq:CS} with $\varphi = W_2 1_{\{W_2>0\}}$ to obtain that
$$
\int_{\{W_2>0\}}t^{1-2s} |\nabla W_2|^2d(x,t)=0.
$$
This in turn implies that $W_2$ is constant in $\cO$. Hence, by continuity, $W_2 \equiv 0$ in $\{W_2>0\}$ which is not possible. Hence $\{W_2>0\}\cap B \times \{0\} \not =  \varnothing$, and in the same way we see that $\{W_2<0\}\cap B \times \{0\} \not =  \varnothing$.
\end{proof}

We also recall the following definition.

\begin{definition}\label{def:change-sign}
  Let  $L\geq 1$ be an integer  and let $w \in C(B)$ be radial, i.e., $w(x)=\ti w(|x|)$ with some $\ti w\in C(0,1)$.    We say that $w$ changes sign at least $L$ times in the radial variable if there exists  $y_i \in (0,1)$, for $i=0,\dots, L$ with $y_0<y_1< \dots <y_{L}$ and such that $\ti w(y_i)\ti w(y_{i+1})<0$ for $i=0\dots, L-1$. We also say $w$ changes sign precisely $L$ times in the radial variable if $L$ is the largest number with this property.
\end{definition}

The following is a rather direct consequence of Lemma~\ref{lem-nod-dom-simple} and Lemma~\ref{sec:topological-lemma-1} in the appendix, see also \cite{FLS,FL}.

\begin{corollary}
  \label{lem-nod-dom-simple-corollary}
Let $V \in L^q(B)$, with $q>\max(\frac{N}{2s},1)$,  be a radial function, and let $w_2 \in \cH^s_{rad}(B)$ be an eigenfunction of (\ref{eq:1.1-Lin}) corresponding to the eigenvalue $\s_2=\s_2(V)$. Then $w_2$ changes sign at most twice in the radial variable. 
\end{corollary}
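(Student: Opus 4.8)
The plan is to deduce Corollary~\ref{lem-nod-dom-simple-corollary} from the structural information on the $s$-harmonic extension $W_2$ provided by Lemma~\ref{lem-nod-dom-simple}, combined with a topological curve-intersection argument (Lemma~\ref{sec:topological-lemma-1}, stated later in the appendix). The point is that a sign change of the radial function $w_2$ at a radius $y_i \in (0,1)$ forces a ``crossing'' of the zero set of $W_2$ near the boundary segment $\{(x,0) : |x| = y_i\}$, and too many such crossings would be incompatible with $W_2$ having only two nodal domains.

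More concretely, here is how I would carry it out. Suppose for contradiction that $w_2$ changes sign at least three times in the radial variable; by Definition~\ref{def:change-sign} there exist $0 < y_0 < y_1 < y_2 < y_3 < 1$ with $\ti w_2(y_i)\ti w_2(y_{i+1}) < 0$ for $i = 0,1,2$. Identifying $W_2$ with a function of $(r,t) \in [0,\infty) \times [0,\infty)$ via radial symmetry in $x$, the trace points $(y_0,0), (y_1,0), (y_2,0), (y_3,0)$ lie alternately in $\{W_2 > 0\}$ and $\{W_2 < 0\}$. Now connect consecutive same-sign trace points by paths inside the corresponding nodal domain — for instance, a path from $(y_0,0)$ to $(y_2,0)$ staying in one nodal component and a path from $(y_1,0)$ to $(y_3,0)$ staying in the other. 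By Lemma~\ref{lem-nod-dom-simple} the sets $\{\pm W_2 > 0\}$ are each \emph{connected}, so such connecting paths exist within the half-plane picture; but the two paths join points interlaced along the boundary line $\{t=0\}$, hence by a Jordan-curve / curve-intersection argument (this is exactly the role of Lemma~\ref{sec:topological-lemma-1}) the paths must intersect, which is impossible since they lie in the disjoint open sets $\{W_2>0\}$ and $\{W_2<0\}$. This contradiction shows $w_2$ changes sign at most twice in the radial variable.

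The main obstacle — and the reason one cannot simply invoke connectedness — is the careful topological bookkeeping: one must set up the half-plane $\{(r,t):r>0,\ t\ge 0\}$ as the arena, verify that the nodal domains of $W_2$ there are the ``same'' connected sets as in $\ov{\R^{N+1}_+}$ after the radial reduction (using that $W_2$ is radial in $x$ and continuous up to $t=0$), and then apply the planar intersection lemma to paths whose endpoints on the axis $\{t=0\}$ are interlaced. The continuity of $W_2$ on $\ov{\R^{N+1}_+}$ (from Lemma~\ref{lem:reg-bdr-ok} and \eqref{eq:extens1}) is what guarantees the nodal domains are open in the relevant topology and that nearby trace points can be joined by short paths into the correct domain. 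Since the paper defers the precise statement of the planar lemma to Section~\ref{sec:appendix}, the cleanest write-up is: reduce to the two-dimensional half-strip, quote Lemma~\ref{lem-nod-dom-simple} for the two connected nodal domains meeting $B \times \{0\}$, and quote Lemma~\ref{sec:topological-lemma-1} to rule out four interlaced trace points, concluding at most two sign changes.
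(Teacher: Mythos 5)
Your argument is correct and is exactly the one the paper intends: the paper gives no written proof of this corollary but declares it "a rather direct consequence" of Lemma~\ref{lem-nod-dom-simple} and Lemma~\ref{sec:topological-lemma-1}, and your interlacing-of-four-trace-points contradiction, after reducing to the half-plane $\ov{\R_+\times\R_+}$ where the nodal sets are relatively open, connected, hence path-connected, is precisely the argument the paper itself carries out in the analogous steps of Theorem~\ref{th:-main-nonzero} and Proposition~\ref{key-characterization}.
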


\section{Proof of Theorem~\ref{main-theorem}}
\label{sec:proof-theor-refm}

In this section we complete the proof of Theorem~\ref{main-theorem}. 
We start with the following simple lemma, which we shall use multiple times in the following.

\begin{lemma}
  \label{zero-sign-change}
  Let $V$ satisfy \eqref{main-assumption-V},  let $w \in \cH^s(B)$ be an eigenfunction of (\ref{eq:1.1-Lin}) corresponding to the eigenvalue $\s$ and let $W$ be its $s$-harmonic extension. If $w(x_0)=0$ for some $x_0 \in B$, then $W$ changes sign in every relative neighborhood $N$ of $(x_0,0)$ in $\overline{\R^{N+1}_+}$.
\end{lemma}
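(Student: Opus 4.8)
The plan is to argue by contradiction: suppose $W$ does \emph{not} change sign in some relative neighborhood $N$ of $(x_0,0)$ in $\overline{\R^{N+1}_+}$, so that, after possibly replacing $w$ by $-w$, we may assume $W \ge 0$ on $N$. Since $w(x_0)=0$ and $w$ is continuous, the point $(x_0,0)$ is a boundary minimum of $W$ on $N$ with value $0$. The idea is to contradict the strong maximum principle / Hopf boundary point lemma for the degenerate elliptic operator $\mathrm{div}(t^{1-2s}\n \cdot)$ on the half-space, combined with the boundary condition that $W$ satisfies along $B \times \{0\}$.

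First I would recall that, by Lemma~\ref{lem:reg-bdr-ok}(iii), $w \in C^{2s+\b}_{loc}(B)$, so by the remarks following \eqref{eq:extens1} the extension $W$ satisfies \eqref{eq:extens1} and the Neumann-type identity \eqref{eq:Ext-Ds-lim}, namely $-\lim_{t\to 0} t^{1-2s}\partial_t W(x,t) = a_s \Ds w(x)$ for $x$ near $x_0$, with $\Ds w(x_0) = (\s - V(x_0)) w(x_0) = 0$. Thus $W$ is a nonnegative solution of a degenerate elliptic equation in the (half-)ball part of $N$, with $W(x_0,0)=0$, and with vanishing conormal derivative $t^{1-2s}\partial_t W$ at $(x_0,0)$. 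Now there are two cases. If $W \equiv 0$ on a full neighborhood of $(x_0,0)$ in $\overline{\R^{N+1}_+}$, then a unique continuation argument for the extension problem — or simply the fact, used in Lemma~\ref{lem-nod-dom-simple}, that the nodal set of $W$ cannot contain an open set unless $W$ vanishes on a whole nodal domain — forces $w \equiv 0$ near $x_0$, and then $w \equiv 0$ in $B$ by unique continuation for $\Ds + V$, contradicting that $w$ is an eigenfunction. Otherwise $W \not\equiv 0$ near $(x_0,0)$, and then the strong maximum principle for $\mathrm{div}(t^{1-2s}\n\cdot)$ (valid for this $A_2$-Muckenhoupt weight, see Fabes--Kenig--Serapioni type theory, or the version used for Caffarelli--Silvestre extensions) gives $W>0$ in the interior of the half-ball; the Hopf boundary point lemma at the characteristic boundary point $(x_0,0) \in \R^N \times \{0\}$ then yields a strictly negative (weighted) conormal derivative there, i.e.\ $\liminf_{t\to 0^+} \big({-}t^{1-2s}\partial_t W(x_0,t)\big) > 0$, contradicting $a_s\Ds w(x_0)=0$.

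The main obstacle I anticipate is making the Hopf-type statement at the \emph{flat part} $\{t=0\}$ of the boundary rigorous for the degenerate weight $t^{1-2s}$: at such points the outward normal is tangent to the degeneracy, so one must use the weighted Hopf lemma adapted to the Caffarelli--Silvestre setting rather than the classical one. A clean way around this, which I would actually prefer, is to avoid Hopf entirely and instead reason topologically as in Lemma~\ref{lem-nod-dom-simple}: assuming $W \ge 0$ on $N$, the function $\varphi := \min(W,\d)\cdot \chi$ for a suitable cutoff $\chi$ supported in $N$ and a small constant $\d>0$, or more simply $\varphi = W\, 1_{\{W < \d\} \cap N}$ appropriately truncated, can be tested in \eqref{eq:CS}; since $W(x_0,0)=0$ and $w$ does not vanish identically, one extracts that $\n W = 0$ on the set where $0<W<\d$ near $(x_0,0)$, forcing $W$ to be locally constant and hence (by continuity and $W(x_0,0)=0$) to vanish near $(x_0,0)$, which as above contradicts unique continuation. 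I would present the argument in this second form, since it only uses tools already set up in the paper — \eqref{eq:CS}, the regularity Lemma~\ref{lem:reg-bdr-ok}, and unique continuation for $\Ds+V$ — and sidesteps the delicate degenerate Hopf lemma, which is anyway developed separately in Section~\ref{sec:hopf-type-boundary}.
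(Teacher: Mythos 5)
Your first argument is, almost verbatim, the paper's own proof: assume by contradiction that $W\ge 0$ on some relative neighborhood $N$ of $(x_0,0)$; rule out $W\equiv 0$ on $N$ by unique continuation (via \cite{Fall-Felli}); apply the strong maximum principle to get $W>0$ in $N\cap\R^{N+1}_+$; invoke the boundary Hopf lemma for the extension at the flat point $(x_0,0)$ to get $-\lim_{t\to 0}t^{1-2s}\partial_t W(x_0,t)<0$; and contradict $a_s\Ds w(x_0)=(\s-V(x_0))w(x_0)=0$, which is available by Lemma~\ref{lem:reg-bdr-ok}(iii) and \eqref{eq:Ext-Ds-lim}. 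The ``delicate degenerate Hopf lemma'' you worry about is not an obstacle: the paper simply cites \cite[Proposition 4.11]{CS}, which is exactly the weighted Hopf/boundary Harnack statement for $\div(t^{1-2s}\n\cdot)$ at the flat boundary. Had you stopped there, the proposal would be correct and identical in route to the paper.

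The alternative you say you would actually present, however, has a genuine gap. Testing \eqref{eq:CS} with $\varphi=\min(W,\d)\chi$ (or a truncation of $W\,1_{\{W<\d\}\cap N}$) does not yield $\n W=0$ on $\{0<W<\d\}$ near $(x_0,0)$: unlike in Lemma~\ref{lem-nod-dom-simple}, where the test function $W_2 1_{\{W_2>0\}}$ has \emph{identically vanishing trace} on $\R^N\times\{0\}$ so that the right-hand side $a_s[w,\varphi(\cdot,0)]_s$ drops out, here $\varphi(\cdot,0)$ is nonzero near $x_0$ (since $w(x_0)=0$ forces $|w|<\d$ on a whole neighborhood of $x_0$), so the identity reads $\int t^{1-2s}\n W\cdot\n\varphi = a_s\int_B(\s-V)w\,\varphi(\cdot,0)\,dx$, whose right-hand side has no reason to vanish or even to have a sign; in addition the cutoff produces the cross term $\int \min(W,\d)\,t^{1-2s}\n W\cdot\n\chi$, which is likewise uncontrolled. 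You therefore cannot conclude local constancy of $W$, and indeed the intermediate conclusion you aim for ($W\equiv 0$ in a neighborhood of $(x_0,0)$ whenever $W\ge 0$ there) is exactly the negation of what a quantitative Hopf lemma rules out --- it cannot be reached by this soft energy argument alone. Keep the first version of your proof and discard the second.
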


We point out that neither $V$ nor $w$ needs to be radial here.

\begin{proof}
We first claim that 
\be \label{zero-sign-change-1}
\textrm{$W$ takes negative values in any relative neighborhood $N$ of $(x_0,0)$ in $\ov{\R^{N+1}_+}$.}
\ee
To show this, we suppose by contradiction that there exists a relative neighborhood $N$ of $(x_0,0)$ in $\ov{\R^{N+1}_+}$ with $W\geq 0$ in $N$. We have $W \not \equiv 0$ in $N$ since otherwise $W \equiv 0$ in $\ov{\R^{N+1}_+}$ by unique continuation (see e.g. \cite{Fall-Felli}) and therefore $w \equiv 0$, which is impossible. Hence the strong maximum principle implies that $W>0$ in $N \cap \R^{N+1}_+$. Consequently, since we assume that $w(x_0)=W(x_0,0)=0$, it follows from \cite[Proposition 4.11]{CS} that  $-\lim \limits_{t\to 0} t^{1-2s}\de_t W(x_0,0)<0$. Indeed, this is stated with $\liminf$ in place of $\lim$ in \cite[Proposition 4.11]{CS}, but the limit exists in this case due to the regularity properties of eigenfunctions and their extensions pointed out in the preceding section. 
On the other hand, by Lemma~\ref{lem:reg-bdr-ok}$(iii)$ we have ${w}\in C^{2s+\a}_{loc}(B)$, and \eqref{eq:Ext-Ds-lim} yields  
 $$
 -\lim_{t\to 0} t^{1-2s}\de_t W(x_0,0)=a_s\Ds {w}(x_0)=(-V(x_0)+\s){w}(x_0)=0.
 $$
This contradiction proves \eqref{zero-sign-change-1}.  Moreover,replacing $w$ with $-w$ and $W$ with $-W$ shows that $W$ also takes positive values in any relative neighborhood $N$ of $(x_0,0)$ in $\ov{\R^{N+1}_+}$. The claim thus follows. 
\end{proof}

Next, we show that radial solutions to  \eqref{eq:1.1-Lin} are uniquely determined by their value in the origin if $\s=\s_2(V)$.

 \begin{theorem}\label{th:-main-nonzero}
Let $V$ satisfy (\ref{main-assumption-V}) and let $w_2 \in \cH^s_{rad}(B)$ be a radial solution to  \eqref{eq:1.1-Lin} with $\s=\s_2(V)$.
If $w_2(0)=0$, then $w_2 \equiv 0$ in $B$. As a consequence, the eigenvalue $\s_2(V)$ is simple. 
\end{theorem}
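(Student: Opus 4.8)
The plan is to argue by contradiction: suppose $w_2 \in \cH^s_{rad}(B)$ is a nontrivial radial second eigenfunction with $w_2(0)=0$, and derive a contradiction via a rearrangement argument that lowers the Rayleigh quotient below $\s_2(V)$. First I would record the structural consequences already available: by Corollary \ref{lem-nod-dom-simple-corollary}, $w_2$ changes sign at most twice in the radial variable, and by Lemma \ref{lem-nod-dom-simple} its $s$-harmonic extension $W_2$ has exactly two nodal domains, both meeting $B\times\{0\}$; by Lemma \ref{zero-sign-change} the hypothesis $w_2(0)=0$ forces $W_2$ to change sign in every relative neighborhood of the origin $(0,0)$ in $\overline{\R^{N+1}_+}$. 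Combined with radial monotonicity-type information, this means $w_2$ actually changes sign \emph{twice} in the radial variable (it cannot change sign only once, since then $w_2(0)\neq 0$), so in radial terms $w_2$ is, say, negative near $0$, positive on a middle annulus, and negative near $\partial B$ (or the sign pattern $+,-,+$).

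The heart of the argument is a radial rearrangement that ``folds'' the inner nodal annulus into the origin to produce a competitor $v \in \cH^s_{rad}(B)$ with the \emph{same} two-nodal-domain structure but with a strictly smaller Gagliardo energy $[v]_s^2$, the same $L^2$ norm, and $\int_B V v^2 \le \int_B V w_2^2$ (using here that $V$ is radially nondecreasing, so pushing mass toward the center does not increase the potential term), while preserving the orthogonality constraint $\langle v, w_1\rangle_{L^2(B)}=0$ --- or, more robustly, one builds $v$ so that it is a \emph{sign-definite-on-the-inner-ball} function that is admissible for a suitable min-max and beats $\s_2(V)$. Concretely, I would define $v$ by reflecting/reassembling the three radial pieces of $w_2$ about the two interior zeros so as to collapse the structure to a function whose positive part is a ball and whose negative part is the complementary annulus, i.e. a genuine ``one sign change'' profile; the point of $w_2(0)=0$ is precisely that it allows this surgery to be performed continuously at the center without creating a singularity, because the inner piece vanishes there. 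The nonlocal Gagliardo seminorm must then be shown to strictly decrease under this operation: this is where polarization / two-point rearrangement inequalities for the kernel $|x-y|^{-N-2s}$ (as in the work of Frank--Lenzmann--Silvestre, cf.\ \cite{FLS,FL}) enter, exploiting strict convexity-type properties of the kernel and the fact that the rearrangement is nontrivial (it genuinely moves mass, by Lemma \ref{zero-sign-change}).

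Having produced such a $v$, the contradiction is immediate: $v$ is radial, nontrivial, and --- after subtracting its $w_1$-component, which only decreases the numerator of the Rayleigh quotient while the orthogonal projection argument controls the change --- it yields, via \eqref{eq:sig-kV-intro-s-2-alternative}, a test function with $\frac{[v]_s^2+\|v\|_{L^2(B;Vdx)}^2}{\|v\|_{L^2(B)}^2} < \s_2(V)$, which is impossible. Finally, the ``as a consequence'' clause: once we know $w_2(0)\neq 0$ for every nontrivial second eigenfunction, simplicity of $\s_2(V)$ follows by the standard linear-algebra argument --- if the eigenspace had dimension $\ge 2$, one could take a nontrivial linear combination of two independent radial eigenfunctions vanishing at $0$, contradicting what we just proved. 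The main obstacle I anticipate is the careful bookkeeping in the rearrangement step: one must verify that the surgery keeps $v$ in $\cH^s(B)$ (no energy concentration at the gluing radii), that the strict decrease of $[\cdot]_s$ is genuinely strict (not merely $\le$), and that the potential term is controlled using only ``radially nondecreasing'' rather than any monotonicity we do not have; getting the strictness and the orthogonality constraint to coexist cleanly is the delicate point, and it is presumably where the hypothesis $w_2(0)=0$ is used in an essential way.
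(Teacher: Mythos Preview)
Your proposal has a genuine structural gap at the very first branching point. You assert that $w_2(0)=0$ together with Corollary~\ref{lem-nod-dom-simple-corollary} forces $w_2$ to change sign \emph{twice} in the radial variable, writing ``it cannot change sign only once, since then $w_2(0)\neq 0$''. This implication is not justified: a radial eigenfunction that changes sign exactly once, say $w_2\ge 0$ on $B_{r_0}$ and $w_2\le 0$ on $B\setminus B_{r_0}$, may perfectly well satisfy $w_2(0)=0$ (nothing in Lemma~\ref{zero-sign-change} rules this out; it only says the \emph{extension} $W_2$ changes sign near $(0,0)$, which is compatible with the trace being nonnegative near $0$). In fact, the paper proves the \emph{opposite} conclusion: under the hypothesis $w_2(0)=0$, a topological argument (Lemma~\ref{zero-sign-change} combined with the path-connectedness of the nodal domains of $W_2$ and the curve-intersection Lemma~\ref{sec:topological-lemma-variant}) excludes the two-sign-change scenario, so $w_2$ changes sign \emph{exactly once}. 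Your subsequent ``folding'' surgery is therefore built on the wrong case.

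Once one knows $w_2$ changes sign once, the paper's mechanism is also quite different from what you sketch. Writing $v=w_2$ with $v^+$ supported in a ball $B_{r_0}$ and $v^-$ in the annulus, the paper replaces $v^+$ by its Schwarz symmetrization $v_*$ and shows three separate inequalities: $[v_*]_s^2\le [v^+]_s^2$ (Almgren--Lieb), $\int(\s_2-V)v_*^2\ge \int(\s_2-V)(v^+)^2$ (Hardy--Littlewood, using that $V$ is radially nondecreasing), and the nontrivial cross-term estimate $-[v_*,v^-]_s\le -[v^+,v^-]_s$, proved by writing the interaction kernel as a hypergeometric function monotone in the inner radius. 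One then tests the Rayleigh quotient with $v_*-\kappa v^-$ (choosing $\kappa>0$ to enforce orthogonality to $w_1$) and obtains a chain of inequalities that can be closed only if $\kappa=1$ and \emph{all} inequalities are equalities. Equality in the chain forces $v_*-v^-$ itself to be a second eigenfunction, and unique continuation then gives $w_2=v_*-v^-$, whence $w_2(0)=v_*(0)=\|v_*\|_{L^\infty}>0$, the desired contradiction. Note in particular that no \emph{strict} inequality in the rearrangement step is needed or claimed; the contradiction comes from the equality-case analysis, not from beating $\s_2(V)$ from below. Your plan to obtain a strict drop in $[\cdot]_s$ and then ``subtract the $w_1$-component'' would require a separate strictness argument that you have not supplied, and the orthogonality adjustment you describe does not, in general, only decrease the numerator.
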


\begin{proof}
Suppose by contradiction that ${w_2}(0)= 0$ but ${w_2} \not \equiv 0$.  We already know that ${w_2}$ changes sign at least once and in the radial variable, since it is $L^2$-orthogonal to the (up to normalization unique) positive first eigenfunction.

{\bf Claim 1:} $w_2$ changes sign only once.  

To see this, we argue by contradiction and assume, without loss of generality, that  there exists $0<r_1<r_2<r_3<1$ such that $w_2(r_i)>0$ for $i=1,3$ and $w_2(r_2)<0$.\\
Let $W_2$ be the $s$-harmonic extension of $w_2$, and define $\ti W: \ov{ \R_+\times \R_+} \to \R$ by $\ti W(|x|,t)=W_2(x,t)$. Then by Lemma~\ref{lem-nod-dom-simple}, the sets $\cO_+:= \{\ti W>0\}$ and $\cO_-:= \{\ti W<0\}$ are (relatively) open in $\ov{ \R_+\times \R_+}$ and connected in $\ov{\R_+\times \R_+}$, so they are also path connected. In particular, there exists a continuous  curve  $\g :[0,1]\to \cO_+$ joining the points $(r_1,0)$ and $(r_3,0)$.  

Moreover, since we assume that $w_2(0)=\ti W(0,0)=0$ and therefore $(0,0)\not\in \g([0,1])$, we have $d:= \textrm{dist}(\g([0,1]), (0,0) )>0$, and we may use Lemma~\ref{zero-sign-change} to find $z \in \ov{\R_+\times  \R_+}$ with $|z|<d$ and $\ti W(z)<0$. By path connectedness of $\cO_-$, we then find a continuous curve $\eta:[0,1]\to \cO_-$ joining $z$ and $(r_2,0)$. By Lemma~\ref{sec:topological-lemma-variant} in the appendix applied to the points $0<r_1<r_2<r_3$, this curve must intersect $\gamma$. This, however, is impossible since $\cO_+ \cap \cO_- = \varnothing$.   From this contradiction, Claim 1 follows.

Next, we write $v$ in place of $w_2$ to simplify the notation.  As a consequence, from \eqref{eq:1.1-Lin}, we have 
\be\label{eq:ine-vp} 
  \int_{\R^N}(\s_2-V)(v^+)^2 dx= [v^+]_s^2-[v^+,v^-]_s
\ee
and 
\be \label{eq:ine-vm} 
  \int_{\R^N}(\s_2-V)(v^-)^2 dx= [v^-]_s^2-[v^+,v^-]_s.
\ee
By Claim 1, we may assume that there exists ${r_0} \in (0,1)$ with $v \ge 0$, $v \not \equiv 0$ on $B_{r_0}(0)$ and $v \le 0$, $v \not \equiv 0$ on $B_1(0) \setminus B_{r_0}(0)$. Let $v_*$ denote the Schwarz symmetrization of the function $v^+ \in \cH^s(B)$. Then
\be\label{eq:supporv_*}
\text{supp}\, v_*\subset B_{r_0} 
\ee
  and by a classical result of Almgren and Lieb \cite[Theorem 9.2 (i)]{almgren-lieb}, we have  $v_* \in \cH^s(B)$  and 
\begin{equation}
  \label{eq:Almgren-Lieb}
[v_*]_s^2 \le [v^+]_s^2.   
\end{equation}
We note also  that\footnote{We note that if $V$ is unbounded,  then the inequality holds with   $V 1_{B_{1-\frac{1}{n}}}\in L^\infty(B)$, for $\in \N$.  Therefore, by the dominated convergence theorem,  we can let $ n\to \infty$ to get \eqref{BBDG-cons-1}.} 
\begin{equation}
  \label{BBDG-cons-1}
 \int_{\R^N}(\s_2-V)v_*^2  dx \ge \int_{\R^N}(\s_2-V)(v^+)^2 dx,
  \end{equation}
by the classical Hardy-Littlewood inequality (see e.g. \cite[Theorem 3.4]{Lieb-Loss}), since the function $\s_2-V$ is nonincreasing by assumption and since $v_*^2$ equals the Schwarz symmetization of $(v^+)^2$.   \\
%
  In the following, we wish to prove that
  \begin{equation}
  \label{BBDG-cons-2-0}
  - [v_*,v^-]_s \le -[v^+,v^-]_s
  \end{equation}
Since $v_* \equiv 0$ on $B_1(0) \setminus B_{r_0}(0)$ and $v^- \equiv 0$ on $B_{r_0}(0)$, we have, using polar coordinates,
  \begin{align}
    -[v_*,v^-]_s &=  2 c_{N,s}\int_{B_1 \setminus B_{r_0}}\int_{B_{r_0}} |x-y|^{-N-2s} v_*(x)v^-(y)\,dx dy  \nonumber \\
                 &=  2 c_{N,s} \int_{{r_0}}^1 \rho^{N-1} v^-(\rho)\Bigl(\int_{B_{r_0}}v_*(x) h_\rho(x)dx\Bigr)d \rho,  \label{BBDG-cons-2-0-0}
  \end{align}
  where, for $\rho \in ({r_0},1)$, 
  $$
  h_\rho(x)= \int_{S^{N-1}} |x-\rho y|^{-N-2s}d\sigma(y) =\Theta_N(|x|,\rho) 
  $$
  with
  $$
  \Theta_N(r,\rho) = (\rho-r)^{-N-2s} + (\rho+r)^{-N-2s} \qquad    \text{for $N=1, \quad 0 \le r < \rho \le 1$}
  $$
  and
  $$
  \Theta_N(r,\rho)= \int_{S^{N-1}} |r e_1-\rho y|^{-N-2s}d\sigma(y)
  = \frac{\alpha_N}{\rho^{N+2s}} {_2}F_1\Bigl(\frac{N+2s}{2}; s+1; \frac{N}{2}, \frac{r^2}{\rho^2}\Bigr)
  $$
  for $N>1$ and $0 \le r < \rho \le 1$, see e.g. \cite[Section 5]{Ferone-Volzone-2021}. Here $\alpha_N = \frac{2 \pi^{\frac{N-1}{2}}}{\Gamma(\frac{N-1}{2})}$,  and ${_2}F_1$ denotes the hypergeometric function given by 
  $$
  x \mapsto {_2}F_1(a,b;c;x) = \sum_{n=0}^\infty \frac{(a)_n (b)_n}{n! (c)_n} x^n
  $$
  with the Pochhammer symbols $(a)_n, (b)_n$ and $(c_n)_n$. Since, for fixed $a,b,c>0$,
  the function $  x \mapsto {_2}F_1(a,b;c;x)$ is positive and increasing on $(0,1)$ as $(a)_n, (b)_n$ and $(c_n)_n$ are positive for all $n$, the function $r\mapsto \Theta_N(r,\rho)$ is positive and increasing in $r \in (0,\rho)$ for $N>1$.
The same is true for $N=1$ since in this case we have
$$
  \frac{d}{dr}\Theta_N(r,\rho)= (N+2s)\Bigl((\rho-r)^{-N-2s-1} - (\rho+r)^{-N-2s-1}\Bigr)>0 \qquad \text{for $0 \le r < \rho \le 1$}
$$
Consequently, for $\rho \in ({r_0},1)$ we have, by applying again the Hardy-Littlewood inequality, 
$$
  \int_{B_{r_0}}v_*(x) h_\rho(x)dx = \int_{B_{r_0}}v_*(x)\Theta_N(|x|,\rho) dx \le \int_{B_{r_0}}v^+(x)\Theta_N(|x|,\rho)dx = \int_{B_{r_0}}v_+(x) h_\rho(x)dx,  
$$
  which by \eqref{BBDG-cons-2-0-0} implies that
$$  -[v_*,v^-]_s \le 2 c_{N,s} \int_{{r_0}}^1 \rho^{N-1} v^-(\rho)\Bigl(\int_{B_{r_0}}v_+(x) h_\rho(x)dx\Bigr)d \rho =     -[v_+,v^-]_s,
$$
as claimed in (\ref{BBDG-cons-2-0}).\\
We now proceed by an argument similar to the one in \cite[Proof of Lemma 2.1]{BBDG}. In view of \eqref{eq:sig-kV-intro-s-2-alternative},  there exists  $\k>0$  such that  $\int_{B}(v_*-\k v^-)w_1dx=0$ and  
\be \label{eq:near-eigen}
\int_{B}(\s_2-V) (v_*-\k v^-)^2dx\leq [v_*-\k v^-]_s^2=[v_*]_s^2+\k^2 [v^-]^2_s-2\k [v_*,v^-]_s . 
\ee
From this,  \eqref{eq:Almgren-Lieb} and \eqref{BBDG-cons-2-0}, we obtain
$$
\int_{B}(\s_2-V) (v_*-\k v^-)^2dx \leq  [v^+]_s^2+\k^2 [v^-]^2_s-2\k [v^+,v^-]_s.
$$
Combining  this with  \eqref{eq:ine-vp}, \eqref{eq:ine-vm} and  \eqref{eq:supporv_*},   we get 
\begin{align}\label{eq:chain-of-ineq}
&\int_{B}(\s_2-V) (v^+-\k v^-)^2dx\leq \int_{B}(\s_2-V) v_*^2 dx+\k^2 \int_{B}(\s_2-V) (v^-)^2dx \nonumber \\
& =\int_{B}(\s_2-V) (v_*-\k v^-)^2dx \leq [v^+]_s^2+\k^2 [v^-]^2_s-2\k [v^+,v^-]_s  \\
&      =\int_{B}(\s_2-V) (v^+-\k v^-)^2dx+ (1+\k ^2-2\k) [v^+,v^-]_s. \nonumber 
\end{align}
Since  $[v^+,v^-]_s<0$ and $1+\k ^2-2\k=(1-\k)^2$, we then deduce that $\k=1$ and all the inequalities in the display  \eqref{eq:chain-of-ineq} become equalities.  A a consequence equality holds in \eqref{eq:near-eigen} with $\k=1$ and thus  $\o =v_*- v^-=v_*- w_2^-$ is an eigenfunction of (\ref{eq:1.1-Lin}) corresponding to $\s=\s_2(V)$.
  Since,  by  \eqref{eq:supporv_*}, $w_2 \equiv  \o$ on $B_1 \setminus B_{r_0}$, we conclude that $w_2 \equiv \o$ by fractional unique continuation (see \cite{Fall-Felli}).  However, since $w_2^+ \not \equiv 0$, it now follows from the properties of Schwarz symmetrization that
  $$
  v_*(0)= w_2(0)= \|v_*\|_{L^\infty(B)} >0,
  $$
  which gives a contradiction. The proof is thus finished. 
\end{proof}

Next, we need the following key equivalence statement.

 \begin{proposition}\label{key-characterization}
Let $V$ satisfy (\ref{main-assumption-V}), and let $w_2 \in \cH^s_{rad}(B) \setminus \{0\}$ be a radial solution to  \eqref{eq:1.1-Lin} with $\s=\s_2(V)$. Then the following assertions are equivalent:
\begin{enumerate}
\item[(i)] $w_2$ changes sign precisely once in the radial variable.
\item[(ii)] We have   
  \begin{equation}
    \label{key-characterization-ii}
w_2(0) \int_{B} w_2 \,dx < 0.
  \end{equation}
\end{enumerate}
\end{proposition}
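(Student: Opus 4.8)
\emph{Strategy.} Since $w_2(0)\neq 0$ by Theorem~\ref{th:-main-nonzero} and since both (i) and (ii) are invariant under $w_2\mapsto -w_2$, I would first normalize $w_2(0)>0$. As $w_2$ is $L^2(B)$-orthogonal to the positive first eigenfunction $w_1$ it changes sign at least once, and by Corollary~\ref{lem-nod-dom-simple-corollary} it changes sign at most twice; so it is enough to show that $\int_B w_2\,dx<0$ when $w_2$ changes sign exactly once and $\int_B w_2\,dx\ge 0$ when it changes sign exactly twice. A common ingredient will be the asymptotics of the $s$-harmonic extension $W_2$ of $w_2$ along the $t$-axis: from the Poisson representation \eqref{eq:Posso-Ker} and dominated convergence, $t^N W_2(0,t)\to p_{N,s}\int_B w_2\,dx$ as $t\to\infty$, so that whenever $\int_B w_2\,dx\neq 0$ the sign of $W_2(0,t)$ for large $t$ equals the sign of $\int_B w_2\,dx$.

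\emph{One sign change implies (ii).} Here I would use $w_1$, which I claim is strictly radially decreasing. That $w_1$ is radially nonincreasing follows from Schwarz symmetrization exactly as in the proof of Theorem~\ref{th:-main-nonzero} (using \eqref{eq:Almgren-Lieb}, the Hardy--Littlewood inequality, the fact that $\s_1(V)-V$ is radially nonincreasing, and simplicity of $\s_1(V)$). Strictness is then obtained by excluding that $w_1$ is constant on a spherical shell $\{a<|x|<b\}\subset B$: on such a shell the equation reads $\Ds w_1(x)=(\s_1(V)-V(x))\,w_1(x)$, while a direct evaluation of $\Ds w_1(x)$ for $x$ in the shell (the contribution of $y$ in the shell drops out, and $w_1$ is $\ge$ the constant on $B_a$ and $\le$ it outside $B_b$), together with the strict monotonicity of $r\mapsto\Theta(r,\rho)$ recalled in Section~\ref{sec:proof-theor-refm}, shows $\Ds w_1(x)$ to be strictly increasing in $|x|$ on the shell; this forces $V$ to be strictly radially decreasing there, against \eqref{main-assumption-V}. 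Now if $w_2$ changes sign exactly once then, excluding a flat zero set by unique continuation, there is $r_0\in(0,1)$ with $w_2>0$ on $B_{r_0}$ and $w_2\le 0$ on $B\setminus B_{r_0}$; comparing $w_1$ with the constant $w_1(r_0)$ on each region in the identity $0=\int_B w_1 w_2\,dx$ (using $w_1>w_1(r_0)$ where $w_2>0$ and $w_1\le w_1(r_0)$ where $w_2\le 0$) gives $0>w_1(r_0)\int_B w_2\,dx$, i.e.\ (ii).

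\emph{Two sign changes imply (ii) fails.} Assume $w_2$ changes sign exactly twice, so there are $0<r_1<r_2<1$ with $w_2>0$ on $B_{r_1}$, $w_2<0$ on $B_{r_2}\setminus\overline{B_{r_1}}$ and $w_2>0$ on $B\setminus\overline{B_{r_2}}$, and suppose for contradiction that $\int_B w_2\,dx<0$, so that $W_2(0,t)<0$ for all large $t$. Let $\widetilde W_2(|x|,t):=W_2(x,t)$ on $Q:=\{(r,t):r\ge 0,\ t\ge 0\}$; by Lemma~\ref{lem-nod-dom-simple} the sets $\{\widetilde W_2>0\}$ and $\{\widetilde W_2<0\}$ are open, disjoint and connected in $Q$. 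Choosing radii $0<\rho_1<\rho_2<\rho_3<1$ with $\rho_1\in(0,r_1)$, $\rho_2\in(r_1,r_2)$, $\rho_3\in(r_2,1)$ and $w_2(\rho_1),w_2(\rho_3)>0>w_2(\rho_2)$, there is a curve $\gamma$ in $\{\widetilde W_2>0\}$ joining $(\rho_1,0)$ to $(\rho_3,0)$ and a curve $\eta$ in $\{\widetilde W_2<0\}$ joining $(\rho_2,0)$ to $(0,T)$ with $T$ large. The curve $\gamma$, together with the segment of $\partial Q$ between its endpoints, encloses a bounded region which separates $(\rho_2,0)$ from $(0,T)$ within $Q$; since $\eta$ joins these two points and cannot cross the piece of the enclosing curve lying in $\partial Q$, it must cross $\gamma$ — contradicting $\{\widetilde W_2>0\}\cap\{\widetilde W_2<0\}=\varnothing$. (This curve-intersection step is the half-space-extension analogue of the topological lemmas of Section~\ref{sec:appendix} already used in the proof of Theorem~\ref{th:-main-nonzero}.) Hence $\int_B w_2\,dx\ge 0$, so $w_2(0)\int_B w_2\,dx\ge 0$ and (ii) fails.

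\emph{Main obstacle.} The delicate point is this last topological step: with $W_2$ only continuous up to $\{t=0\}$, one must ensure that the nodal sets $\{\widetilde W_2>0\}$, $\{\widetilde W_2<0\}$ are regular enough to run a Jordan-curve argument in $Q$ and to package the intersection claim through the appendix lemmas. A milder, more routine point is establishing the strict radial monotonicity of $w_1$ used in the first case.
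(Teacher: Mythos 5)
Your proof is correct and follows essentially the same route as the paper's: orthogonality to the strictly radially decreasing first eigenfunction yields (i) $\Rightarrow$ (ii), while the two nodal domains of the extension, the curve-intersection lemmas of the appendix, and the Poisson-kernel asymptotics $t^N W_2(0,t)\to p_{N,s}\int_B w_2\,dx$ rule out two sign changes under (ii). The only deviation is that the paper simply cites \cite[Corollary 1.2]{JW} for the strict radial monotonicity of $w_1$, whereas you rederive it via symmetrization plus a shell argument; that derivation is sound but would need to be written out carefully.
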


\begin{proof}
  By Theorem~\ref{th:-main-nonzero} we may, replacing $w_2$ by $-w_2$ if necessary, assume that
  \begin{equation}
    \label{eq:without-loss-positive}
w_2(0)> 0.       
  \end{equation}
We first prove that (i) implies (ii). 
 Let $w_1\in \cH^s(B)\cap C(B)$ be the unique $L^2$-normalized positive  eigenfunction  corresponding to the first  eigenvalue $\s_1(V)$. From  \cite[Corollary 1.2]{JW}, we may deduce that $w_{1}$ is strictly decreasing in its radial variable $|x|$.   
Let $r_0 \in B$ be such that  that $w_2 \gneqq0$, $w_2 \not \equiv 0$ in $B_{r_0}$ and $w_2 \lneqq0$, $w_2 \not \equiv 0$ in $B \setminus B_{r_0}$. Since $w_1=w_1(|x|)$ is strictly decreasing in the radial variable, we then get 
$$
0=\int_B w_2 w_1 dx=\int_{B_{r_0}} w_2 w_1  dx+ \int_{B \setminus B_{r_0} } w_2 w_1  dx> w_1(r_0) \int_B w_2 dx
$$
and hence
\be 
\label{eq:intwBneg}
\int_{B} w_2 \,dx <0.
\ee
Combining (\ref{eq:without-loss-positive}) with \eqref{eq:intwBneg}, we get (ii).\\
Next we prove that (ii) implies (i). For this we argue by contradiction and assume that $w_2= w_2(|x|)$ changes twice in the radial variable, i.e. there exists $0<r_1<r_2<r_3$ with $w(r_1)>0$, $w(r_2)<0$ and $w(r_3)>0$ after replacing $w$ with $-w$ if necessary. We then argue similarly as in the proof of Theorem~\ref{th:-main-nonzero}. For this we let $W_2$ be the $s$-harmonic extension of $w_2$, and we claim that
\begin{equation}
  \label{eq:claim-positivy-vertical-axis}
W_2(0,t) \ge 0 \qquad \text{for all $t>0$.}  
\end{equation}
To see this, we define $\ti W: \ov{\R_+ \times \R_+} \to \R$ by $\ti W(|x|,t)= W_2(x,t)$. By Lemma~\ref{lem-nod-dom-simple}, the sets $\cO_{\pm}:= \{(x,t) \in \ov{\R_+ \times  \R_+}\,:\, \pm W_2>0\}$ are (relatively) open in $\ov{\R_+ \times  \R_+}$ and connected, hence they are also path connected. In particular, there exists a continuous path $\gamma:[0,1] \to \cO_+$ with $\gamma(0)=(r_1,0)$ and $\gamma(1)= (r_3,0)$. Arguing by contradiction, we now assume that there exists a point $(0,t_0)$ with $t_0>0$ and $W_2(0,t_0)<0$. Then there exists another continuous path $\eta: [0,1] \to \cO_-$ with $\eta(0)=(0,t_0)$ and $\eta(1)= (r_2,0)$. By Lemma~\ref{sec:topological-lemma-second-variant} in the appendix, this curve must intersect $\gamma$, but this is impossible since $\cO_+ \cap \cO_-= \varnothing$. The contradiction shows that (\ref{eq:claim-positivy-vertical-axis}) holds. 

Noticing that  
$$
t^{N}  W_2(0,t)=t^{N}  \ti W(0,t)=p_{N,s}\int_{\R^N}\frac{w_2(y)dy}{(1+|y|^2/t^2)^{\frac{N+2s}{2}}}
$$
and that $w_2\in L^1(\R^N)$, we then conclude that 
\be \label{eq:W20w20}
  \lim_{t\to \infty} t^{N}  \ti W(0,t)=p_{N,s}\int_{\R^N}{w_2(y)dy}
\ee
and thus $\int_{\R^N}{w_2(y)dy}\geq0$, since $p_{N,s}>0$ by \ref{eq:Posso-Ker}.  
Together with (\ref{eq:without-loss-positive}), this contradicts our assumption (ii). The contradiction shows that $w_2$ changes sign only once in the radial variable, as claimed.
\end{proof}

Next, we first consider the case $V \equiv 0$, i.e., eigenfunctions corresponding the second radial eigenvalues of the Dirichlet fractional Laplacian.

\begin{proposition}\label{eq:second-eigen-class}
  For $s\in (0,1]$,  let $\l_{2,s}=\s_2(0)$ be the second radial eigenvalue of the Dirichlet fractional Laplacian and $\vp_{2,s}$ be a corresponding eigenfunction. Then $\vp_{2,s}$ changes sign only once in the radial variable.  In particular $\vp_{2,s}(0) \int_{B}\vp_{2,s}\,dx <0$. \end{proposition}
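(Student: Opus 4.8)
The plan is to handle the limit case $s=1$ by classical ODE/Sturm--Liouville theory and then transfer the result to all $s\in(0,1)$ by the continuity of eigenfunctions established in Lemma~\ref{lem:vonve-eig-eig}, using the sign-change characterization of Proposition~\ref{key-characterization}. For $s=1$, the second radial Dirichlet eigenfunction $\vp_{1,2}$ of $-\Delta$ on $B$ solves an ODE in the radial variable, and it is a classical fact (from the oscillation theory of the Bessel-type equation, or from the nodal-domain theorem applied within the radial class) that $\vp_{1,2}$ has exactly one interior zero $r_0\in(0,1)$; this gives the base case. In particular we may normalize so that $\vp_{1,2}(0)>0$, so $\vp_{1,2}>0$ on $B_{r_0}$ and $\vp_{1,2}<0$ on $B\setminus\overline{B_{r_0}}$, whence $\int_B \vp_{1,2}\,dx<0$ by the same computation as in the proof of Proposition~\ref{key-characterization} (pair against the positive decreasing first radial eigenfunction), so the ``in particular'' conclusion holds at $s=1$.

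Next I would set up the continuation argument. Consider the family of potentials $V\equiv 0$, which trivially satisfies \eqref{main-assumption-V} for every $s$, and the map $s\mapsto \s_2(0)=\l_{2,s}$. One should first argue that $\s_2(0)$ is simple for each $s\in(0,1)$: this follows from Theorem~\ref{th:-main-nonzero} with $V\equiv 0$. Then, fixing $s_*\in(0,1)$ and using Lemma~\ref{lem:vonve-eig-eig} along a sequence $s_n\to s_*$ (with the obvious modification that the Gagliardo seminorm itself depends on $s$, which is what the convergence statements in the preliminaries are designed to accommodate — one uses $L^2$-normalized eigenfunctions and the compact embedding, passing to the limit via the Fourier representation \eqref{eq:fourier-transform-representation}), one obtains that normalized eigenfunctions $\vp_{s_n,2}$ converge in $C(\overline B)$, up to sign and subsequence, to $\k\,\vp_{s_*,2}$ with $\k\neq 0$. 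In particular the quantities $\vp_{s,2}(0)$ and $\int_B \vp_{s,2}\,dx$ depend continuously on $s$ (after a consistent choice of sign), so the sign of the product $\vp_{s,2}(0)\int_B\vp_{s,2}\,dx$ is locally constant in $s$ wherever it is nonzero. By Proposition~\ref{key-characterization}, this product is nonzero precisely when $\vp_{s,2}$ changes sign exactly once in the radial variable; and by Corollary~\ref{lem-nod-dom-simple-corollary}, $\vp_{s,2}$ changes sign \emph{at most twice}, so the only alternative is that it changes sign exactly twice, in which case — again by Proposition~\ref{key-characterization}, which is an equivalence — the product is nonnegative, but one must also rule out it being zero.

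Here is where the argument needs care, and I expect the main obstacle to be showing that the ``bad set'' $\{s : \vp_{s,2}(0)\int_B\vp_{s,2}\,dx = 0\}$ is empty rather than merely that the sign is locally constant off this set. The clean way around this: let $A=\{s\in(0,1]:\vp_{s,2}\text{ changes sign exactly once}\}$. By Proposition~\ref{key-characterization}, $s\in A$ iff $\vp_{s,2}(0)\int_B\vp_{s,2}\,dx<0$, an \emph{open} condition; by the continuity from Lemma~\ref{lem:vonve-eig-eig} this shows $A$ is open in $(0,1]$. To see $A$ is closed, suppose $s_n\in A$, $s_n\to s_*$; the limit $\vp_{s_*,2}$ changes sign at least once (it is $L^2$-orthogonal to the positive first eigenfunction) and, by $C(\overline B)$-convergence together with $\vp_{s_n,2}(0)\to\vp_{s_*,2}(0)\neq 0$ (nonzero by Theorem~\ref{th:-main-nonzero}) and $\int_B\vp_{s_n,2}\to\int_B\vp_{s_*,2}$, we get $\vp_{s_*,2}(0)\int_B\vp_{s_*,2}\,dx\le 0$. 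If this product is strictly negative we are done; if it equals zero, then $\int_B\vp_{s_*,2}\,dx=0$, but then pairing $\vp_{s_*,2}$ against the strictly decreasing positive first eigenfunction $w_1$ (as in the proof of Proposition~\ref{key-characterization}) and using that $\vp_{s_*,2}$ changes sign at most twice leads to a contradiction with $\int_B\vp_{s_*,2}w_1\,dx=0$ — a two-sign-change radial function with $w_1(r_0)$ weighting cannot be $L^2$-orthogonal to $w_1$ while having zero mean unless it is trivial. Hence $A$ is closed. Since $1\in A$ by the base case and $(0,1]$ is connected, $A=(0,1]$, which is the assertion; the final ``in particular'' statement is then immediate from Proposition~\ref{key-characterization}. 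The one genuinely delicate point to write out carefully is the closedness step, specifically the exclusion of the degenerate case $\int_B\vp_{s_*,2}\,dx = 0$ — this is where the monotonicity of $w_1$ and the at-most-twice sign change both get used simultaneously.
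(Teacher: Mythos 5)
Your overall strategy --- base case $s=1$ by classical radial Sturm theory, then continuation in $s$ driven by the equivalence in Proposition~\ref{key-characterization} and the $C(\overline B)$-continuity of normalized second eigenfunctions --- is exactly the paper's strategy (the paper phrases the connectedness argument via an infimum $s_*$ rather than an open--closed decomposition, which is cosmetic). However, your closedness step contains a genuine gap. To exclude the degenerate case $\vp_{s_*,2}(0)\int_B\vp_{s_*,2}\,dx=0$ you claim that a radial function changing sign twice which is $L^2$-orthogonal to the positive, strictly decreasing first eigenfunction $w_1$ and has zero mean must be trivial. This is false as a statement about sign patterns: take $v=a\,\chi_{B_{r_1}}-b\,\chi_{B_{r_2}\setminus B_{r_1}}+c\,\chi_{B\setminus B_{r_2}}$ with $a,b,c>0$ (or a smooth version thereof). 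The two conditions $\int_B v\,dx=0$ and $\int_B v\,w_1\,dx=0$ reduce to requiring that a weighted average of the mean values of $w_1$ over the inner and outer regions equal its mean value over the middle annulus; since $w_1$ is strictly decreasing, the middle mean value lies strictly between the other two, so suitable $a,b,c>0$ exist. Hence the sign pattern and the two orthogonality relations alone yield no contradiction; one would have to use the equation satisfied by $\vp_{s_*,2}$, which you do not.

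The fix is the one you announce in your preamble but then abandon: condition (i) of Proposition~\ref{key-characterization} is itself closed under uniform convergence. With Definition~\ref{def:change-sign}, ``changes sign at least twice'' requires three points with alternating nonzero values and is therefore an open condition in $C(\overline B)$; so a $C(\overline B)$-limit of functions changing sign exactly once changes sign at most once, and at least once by orthogonality to $w_1$, hence exactly once --- and then the product is strictly negative by the equivalence, with no degenerate case left to exclude. This is precisely how the paper concludes (combining the strict inequality at $s_*$ obtained this way with the inequality $\ge 0$ coming from parameters where the property fails). Separately, note that Lemma~\ref{lem:vonve-eig-eig} treats varying $V$ at fixed $s$ and does not give the convergence $\vp_{s,2}\to\vp_{\overline s,2}$ as $s\to\overline s$ --- in particular as $s\to 1^-$, which your base case requires. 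The paper spends a substantial part of the proof on this point, using uniform boundary regularity estimates of Ros-Oton and Serra, a uniform bound on $\l_{2,s}$, and the prior convergence of the first eigenfunctions (needed for the min--max characterization of the second eigenvalue); calling this an ``obvious modification'' understates a genuinely technical step.
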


\begin{proof}
We start with the preliminary remark that (\ref{key-characterization-ii}) holds in the case $s=1$, $V \equiv 0$, i.e., we have 
\begin{equation}
  \label{key-property-s-1}
\vp_{2,1}(0) \int_{B} \vp_{2,1} \,dx <0.
  \end{equation}
Indeed, it is well known that $\vp_{2,1}$ changes sign precisely once in the radial variable. Moreover, we have
  \begin{equation}
    \label{eq:key-property-s-1-2}
\int_{B} {\vp_{2,1}} \,dx = \frac{1}{\l_{2,1}}\int_{B}(-\Delta {\vp_{2,1}})\,dx = - \frac{1}{\l_{2,1}}\int_{\partial B}\partial_\nu {\vp_{2,1}}\,d\sigma,
\end{equation}
where $\partial_\nu$ denotes the outer normal derivative on $\partial \Omega$. After replacing ${\vp_{2,1}}$ with $-{\vp_{2,1}}$ if necessary, we may now assume that ${\vp_{2,1}}(0)>0$. Moreover, since ${\vp_{2,1}}$ changes sign precisely once in the radial variable, the classical Hopf Lemma implies that $\partial_\nu {\vp_{2,1}}>0$ on $\partial B$. Hence (\ref{eq:key-property-s-1-2}) implies (\ref{key-property-s-1}).

Next, we recall the variational characterization of $\lambda_{2,s}$ from \eqref{eq:sig-kV-intro} with $V=0$, which is given by
\be \label{eq:sig-kV-intro-V-0}
\lambda_{2,s}=\inf_{\stackrel{\cS\subset \cH^s_{rad}(B)}{\text{dim}(\cS)=2}}\sup_{w\in \cS\setminus \{0\}}\frac{[w]^2_s}{\|w\|_{L^2(B)}^2}.
 \ee
 We claim that
 \begin{equation}
   \label{eq:uniform-boundedness-lambda-2}
\l_{2,s}\leq C(N) \qquad \text{for all $s \in (0,1]$ with a constant $C(N)>0$.}  \end{equation}
To see this, we choose an arbitrary two-dimensional subspace $\cS$ of radial functions in $C^\infty_c(B)$, and we consider the compact subset $\widetilde \cS:= \{\vp \in \cS\::\: \|\vp\|_{L^2(B)}=1\}$. From \ref{eq:sig-kV-intro-V-0}, we then deduce that 
$$
\l_{2,s} \leq \sup_{\vp \in \widetilde \cS} [\vp]_s^2 = \sup_{\vp \in \widetilde \cS} \int_{\R^N}|\xi|^{2s}|\widehat\vp|^2d\xi \leq \sup_{\vp \in \widetilde \cS} \int_{\R^N}(1+|\xi|)^{2}|\widehat\vp|^2d\xi = \sup_{\vp \in \widetilde \cS} \|\vp\|_{H^1(B)}^2=: C(N)
$$
for all $s \in (0,1]$, so (\ref{eq:uniform-boundedness-lambda-2}) is proved. 

Next, we let $\l_{1,s}$ be the first radial eigenvalue of the Dirichlet fractional Laplacian and $\vp_{1,s}\in \cH^s(B)$ be the corresponding positive eigenfunctions, normalized such that
  \begin{equation}
    \label{eq:normalization-L-infty}
   \|\vp_{1,s}\|_{L^\infty(B)}=1 \qquad \text{for $s \in (0,1]$.} 
  \end{equation}
Since $\Ds \vp_{1,s}=\l_{1,s} \vp_{1,s}$ in $B$ and $0 \le \l_{1,s} \le \l_{2,s}\le C(N)$, we may apply\cite[Theorem 1.3]{RS-Duke} to see that, for all $s_0\in (0,1)$, there exists $C=C(N,s_0)>0$ such that   
\be \label{eq:unif-est-eigffu-1}
\|\vp_{1,s}\|_{C^{s}(\ov B)} \leq  C  \qquad\textrm{ for all $s\in [s_0,1)$.}
\ee
Hence, if ${s_*} \in (0,1]$ and $(s_n)_n \subset (0,1)$ is a sequence with $s_n\to {s_*}$, then, up to passing to a subsequence, we have $\l_{1,s_n} \to \lambda_*$ and $\vp_{1,s_n} \to {v_*}$ in $C(\ov B)$ for some function ${v_*} \in C(\ov B)$ satisfying $\|{v_*}\|_{L^\infty(B)}=1$ and $|{v_*}(x)|\leq C (1-|x|)^s_+$ for all $x\in \R^N$.  Moreover, identifying ${v_*}$ with its trivial extension to all of $\R^N$, we have $(-\D)^{{s_*}} {v_*}=\l_{*} {v_*}$ in $\cD'(B)$. Since $\vp_{1,s_n} \to {v_*}$ in $L^1(\R^N)$, we have $\widehat{\vp_{1,s_n}} \to \widehat{{v_*}}$ pointwisely on $\R^N$ and therefore, by Fatou's lemma and (\ref{eq:fourier-transform-representation}), 
\begin{align*}
[{v_*}]_{{s_*}}^2 = \int_{\R^N}|\xi|^{2s}|\widehat{{v_*}}(\xi)|^2d\xi &\le  \liminf_{n \to \infty}\int_{\R^N}|\xi|^{2s_n}|\widehat{\vp_{1,s_n}}(\xi)|^2d\xi = \liminf_{n \to \infty} [\vp_{1,s_n}]_{s_n}^2 \\
&\le \liminf_{n \to \infty} \l_{1,s_n}\|\vp_{1,s_n}\|_{L^2(B)}^2 \le C(N)|B|.
\end{align*}
We stress that this not only holds for ${s_*}<1$ but also in the case ${s_*} = 1$ in which we have
$$
[{v_*}]_{{s_*}}^2 = \int_{B}|\nabla {v_*}|^2\,dx.
$$
Hence ${v_*} \in \cH^{{s_*}}(\B)$, and ${v_*}$ satisfies the eigenvalue equation $(-\D)^{{s_*}} {v_*}=\l_{*} {v_*}$ in weak sense. Since ${v_*}$ is nonnegative and $ \|{v_*}\|_{L^\infty(B)}=1$, we then obtain that  $\l_{*}= \lambda_{1,{s_*}}$ is the first  Dirichlet radial eigenvalue of $(-\D)^{{s_*}}$ in $B$ and ${v_*}= \vp_{\ov  s,1}$. It thus follows that for any ${s_*}\in (0,1]$ we have  
\be\label{eq:first-eig-conv}
\vp_{1,s}\to \vp_{1,\ov  s}  \quad\textrm{in $C(\ov B)$ as $s\to {s_*}$.}
\ee

In the following, we may, by normalization and Theorem~\ref{th:-main-nonzero}, assume that
\begin{equation}
  \label{eq:assume-nonzero-at-zero}
\|\vp_{2,s}\|_{L^\infty(B)}=1 \quad \text{and}\quad \vp_{2,s}(0)>0 \qquad \text{for all $s \in (0,1]$.}  
\end{equation}
Consider again ${s_*} \in (0,1]$ and a sequence $(s_n)_n \subset (0,1)$ with $s_n\to {s_*}$. Then we have
\begin{equation}
  \label{eq:lambda-upper-semicont}
\lambda_{2,{s_*}} \ge \limsup_{n \in \N}\lambda_{2,s_n} 
\end{equation}
Indeed, if $\eps>0$ is given, we may, by the variational characterization \eqref{eq:sig-kV-intro-V-0} and the density of $C^\infty_c(B)$ in $\cH^s(B)$, find a two-dimensional subspace $\cS$ of radial functions in $C^\infty_c(B)$ with the property that
$$
\sup_{\vp \in \widetilde \cS} [\vp]_s^2 \le \lambda_{2,{s_*}} +\eps, \qquad \text{where $\widetilde \cS:= \{\vp \in \cS\::\: \|\vp\|_{L^2(B)}=1\}$.}
$$
Applying \eqref{eq:sig-kV-intro-V-0} again, we thus deduce that
$$
\l_{2,s_n} \le \sup_{\vp \in \widetilde \cS}[\vp]_{s_n}^2 = \sup_{\vp \in \widetilde \cS} [\vp]_s^2 + o(1) \le \lambda_{2,{s_*}} +\eps +o(1) \qquad \text{as $n \to \infty.$}
$$
Thus (\ref{eq:lambda-upper-semicont}) follows. Using the regularity estimate given in \cite[Theorem 1.3]{RS-Duke} together with the facts that $\Ds \vp_{2,s_n}=\l_{2,s_n} \vp_{s_n,2}$ in $B$ and $\l_{2,s_n} \le C(N)$, we may now argue as above to see that, up to passing to a subsequence, we have $\l_{2,s_n} \to \lambda_*$ and $\vp_{2,s_n} \to {v_*}$ in $C(\ov B)$ for some function ${v_*} \in C(\ov B) \cap \cH^{{s_*}}(B)$ satisfying $(-\D)^{{s_*}} {v_*}=\l_{*} {v_*}$, while also
\begin{equation}
  \label{eq:lower-semicont}
  \l_{*} = \frac{[{v_*}]_{{s_*}}^2}{\|{v_*}\|_{L^2(B)}^2} \le \liminf_{n \to \infty}\frac{[\vp_{2,s_n}]_{s_n}^2}{\|\vp_{2,s_n}\|_{L^2(B)}^2}= \liminf_{n \to \infty}\l_{2,s_n}.
\end{equation}
Moreover, by (\ref{eq:assume-nonzero-at-zero}) we have $\|{v_*}\|_{L^\infty(B)}=1$ and ${v_*}(0)>0$, whereas 
$$
\int_{B} {v_*} \vp_{1,\ov  s}\,dx = \lim_{n \to \infty}\int_{B} \vp_{2,s_n} \vp_{1,s_n}\,dx = 0
$$
by \ref{eq:first-eig-conv}. Hence $v$ is sign changing, which implies that $\lambda_* \ge \lambda_{2,{s_*}}$. On the other hand,
$$
\l_{*}\le  \liminf_{n \to \infty}\l_{2,s_n} \le  \limsup_{n \to \infty}\l_{2,s_n} \le \lambda_{2,{s_*}}
$$
by (\ref{eq:lambda-upper-semicont}) and (\ref{eq:lower-semicont}), so equality holds. Since $\|{v_*}\|_{L^\infty(B)}=1$ and ${v_*}(0)>0$, it thus follows from (\ref{eq:assume-nonzero-at-zero}) and the simplicity of $\lambda_{2,{s_*}}$ that ${v_*} = \vp_{2,\ov  s}$.

Consequently, we have shown that for any ${s_*}\in (0,1]$ we have  
\be\label{eq:sec-eig-conv}
\vp_{2,s} \to \vp_{2,{s_*}} \quad \textrm{ in $C (\ov B)$}   \qquad\textrm{ as $s\to  {s_*}$.} 
\ee
We now recall that $\vp_{2,s}$ changes at most sign twice in the radial variable for all $s \in (0,1)$ by Corollary~\ref{lem-nod-dom-simple-corollary}.  By (\ref{key-property-s-1}), we have  $\vp_{2,1}(0) \int_{B}\vp_{2,1}\,dx <0$. Hence by \eqref{eq:sec-eig-conv} and Proposition~\ref{key-characterization}, there exists $s_0\in (0,1)$ such that $\vp_{2,s}$ changes sign precisely once in the radial variable for all $s\in (s_0,1)$. We define 
 $$
 s_*:=\inf \{s_0\in (0,1]\,:\,  \textrm{ $\vp_{2,s}$ changes sign only once in the radial variable for all $s\in (s_0, 1)$}\}.
$$
The proof finishes once we show that $s_*=0$.   Assume on the contrary that $s_*>0$.
 Then by \eqref{eq:sec-eig-conv} and the definition of $s_*$, $\vp_{2,s_*}$ changes sign only once in the radial variable and thus by  Proposition~\ref{key-characterization}
$$
\vp_{2,s_*}(0) \int_{B}\vp_{2,s_*}\,dx <0.
$$
On the other hand  Proposition~\ref{key-characterization} implies that  $\vp_{2,\t}(0) \int_{B}\vp_{2,\t}\,dx \geq 0$ for all $\t\in (0,s_*)$. Hence letting $\t\nearrow s_*$ and using \eqref{eq:sec-eig-conv}, we find that  $\vp_{2,s_*}(0) \int_{B}\vp_{2,s_*}\,dx \geq 0$.  This leads to a contradiction and thus $s_*=0$, as desired.
\end{proof}

 \begin{theorem}\label{th:-main-1-sign-change}
Let $V$ satisfy (\ref{main-assumption-V}), and let $w_2 \in \cH^s_{rad}(B)$ be a nontrivial  solution to  \eqref{eq:1.1-Lin} with $\s=\s_2(V)$.
Then $w_2$ changes sign precisely once in the radial variable. 
\end{theorem}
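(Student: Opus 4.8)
The plan is to combine a homotopy/continuation argument with the characterization in Proposition~\ref{key-characterization}, starting from the case $V \equiv 0$ handled in Proposition~\ref{eq:second-eigen-class}. Fix $V$ satisfying \eqref{main-assumption-V} and, for $\tau \in [0,1]$, set $V_\tau := \tau V$. Each $V_\tau$ is radial and radially nondecreasing and lies in $L^q(B) \cap C^\beta_{loc}(B)$, so Theorem~\ref{th:-main-nonzero} applies: $\s_2(V_\tau)$ is simple, with an $L^2$-normalized radial eigenfunction $w_2^\tau$, and after fixing the sign we may assume $w_2^\tau(0) > 0$ for all $\tau$ (using $w_2^\tau(0)\neq 0$ from Theorem~\ref{th:-main-nonzero}). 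By Lemma~\ref{lem:vonve-eig-eig}, the map $\tau \mapsto w_2^\tau$ is continuous from $[0,1]$ into $C(\overline B)$, hence $\tau \mapsto \bigl(w_2^\tau(0), \int_B w_2^\tau\,dx\bigr)$ is continuous, and in particular $\tau \mapsto w_2^\tau(0)\int_B w_2^\tau\,dx$ is continuous.

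The key point is then the following trichotomy supplied by Proposition~\ref{key-characterization} together with Corollary~\ref{lem-nod-dom-simple-corollary}: for every $\tau$, $w_2^\tau$ changes sign at most twice in the radial variable, and it changes sign exactly once if and only if $w_2^\tau(0)\int_B w_2^\tau\,dx < 0$; consequently, if this quantity is $\geq 0$ then $w_2^\tau$ changes sign exactly twice (it cannot change sign zero times since it is $L^2$-orthogonal to the positive first eigenfunction). Thus the set $S := \{\tau \in [0,1] : w_2^\tau$ changes sign exactly once$\}$ coincides with $\{\tau : w_2^\tau(0)\int_B w_2^\tau\,dx < 0\}$, which is open in $[0,1]$ by continuity. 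I claim $S$ is also closed: if $\tau_n \to \tau$ with $\tau_n \in S$, then $w_2^{\tau_n}(0)\int_B w_2^{\tau_n}\,dx < 0$ for all $n$, so in the limit $w_2^\tau(0)\int_B w_2^\tau\,dx \leq 0$; but if equality held, then $w_2^\tau$ would change sign exactly twice (by the trichotomy), and then Proposition~\ref{key-characterization} forces $w_2^\tau(0)\int_B w_2^\tau\,dx \geq 0$ — no contradiction yet, so I need to rule out the borderline case. The clean way is to observe that the borderline value $w_2^\tau(0)\int_B w_2^\tau\,dx = 0$ is impossible for a nontrivial second eigenfunction: since $w_2^\tau(0) > 0$, it would force $\int_B w_2^\tau\,dx = 0$; reexamining the proof of Proposition~\ref{key-characterization}, the implication (ii)$\Rightarrow$(i) actually shows that two sign changes force $\int_{\R^N} w_2^\tau\,dy \geq 0$ via \eqref{eq:W20w20}, while one sign change forces $\int_B w_2^\tau\,dx < 0$ strictly via the strict monotonicity of $w_1$ and \eqref{eq:intwBneg}; so $\int_B w_2^\tau\,dx = 0$ is incompatible with one sign change, leaving only two sign changes, which by the same argument gives $\int w_2^\tau \geq 0$ consistently. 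Hence the borderline set is contained in the ``two sign changes'' regime, so $S$ is relatively closed: $\tau_n \in S$ means one sign change, the limit cannot have two sign changes because the ``two sign changes'' condition $w_2^\tau(0)\int w_2^\tau \geq 0$ is itself closed and would persist, but the limiting value is $\le 0$, so it is $< 0$ unless it is $0$, which we excluded.

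Since $S$ is nonempty (it contains $\tau = 0$ by Proposition~\ref{eq:second-eigen-class}, as $w_2^0 = \vp_{s,2}$) and both open and closed in the connected interval $[0,1]$, we conclude $S = [0,1]$; taking $\tau = 1$ gives that $w_2$ changes sign precisely once in the radial variable. The main obstacle, and the step requiring the most care, is the closedness of $S$, i.e., excluding the degenerate boundary case $w_2^\tau(0)\int_B w_2^\tau\,dx = 0$: one must argue, as above, that a nontrivial radial second eigenfunction with $w_2^\tau(0) > 0$ cannot have $\int_B w_2^\tau\,dx = 0$, because with one sign change the strict inequality \eqref{eq:intwBneg} holds (strict monotonicity of $w_1$), and with two sign changes one gets $\int w_2^\tau \geq 0$ but then Proposition~\ref{key-characterization}(ii)$\Rightarrow$(i) would again yield a contradiction with having two sign changes; the cleanest packaging is to phrase the continuation directly in terms of the open condition (ii) and invoke Proposition~\ref{eq:second-eigen-class} exactly as the base case $\tau=0$, mirroring the $s_*$-infimum argument already used in the proof of Proposition~\ref{eq:second-eigen-class}.
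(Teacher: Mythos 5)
Your overall strategy coincides with the paper's: continuation along $V_\tau=\tau V$, anchored at $\tau=0$ by Proposition~\ref{eq:second-eigen-class}, using the continuity supplied by Lemma~\ref{lem:vonve-eig-eig} and Theorem~\ref{th:-main-nonzero}, the bound of Corollary~\ref{lem-nod-dom-simple-corollary}, and the equivalence of Proposition~\ref{key-characterization}. The openness of $S=\{\tau\,:\,w_2^\tau \text{ changes sign exactly once}\}$ via the strict inequality in (ii), and the fact that $0\in S$, are fine.

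The gap is in your closedness argument for $S$. If $\tau_n\in S$ and $\tau_n\to\tau$, the limit only gives $w_2^{\tau}(0)\int_B w_2^{\tau}\,dx\le 0$, and your attempt to exclude the borderline value $0$ does not close. You correctly note that one sign change forces the strict inequality \eqref{eq:intwBneg}, so equality would force two sign changes; but you then concede that two sign changes together with $\int_B w_2^{\tau}\,dx=0$ is ``consistent'' --- and indeed Proposition~\ref{key-characterization} alone cannot exclude this configuration, since its (ii)$\Rightarrow$(i) direction only yields $\int_{\R^N}w_2^{\tau}\,dy\ge 0$ for a two-sign-change eigenfunction. Your concluding clause (``\dots unless it is $0$, which we excluded'') is therefore circular, and the remark that the condition ``$\ge 0$'' is closed ``and would persist'' points the wrong way: along $\tau_n\in S$ the quantity is $<0$, so closedness of $\{\ge 0\}$ gives no information about the limit. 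The missing ingredient --- which is precisely what the paper uses and what it means by saying that condition (I) is closed --- is that, by Definition~\ref{def:change-sign}, ``changes sign at least twice'' is an \emph{open} condition under convergence in $C(\overline B)$ (it is witnessed by strict inequalities at finitely many radii), hence ``changes sign at most once'' is closed along the continuous family $\tau\mapsto w_2^\tau$. Combined with the fact that any second eigenfunction changes sign at least once (orthogonality to the positive first eigenfunction), the limit $w_2^{\tau}$ changes sign precisely once, so $\tau\in S$ directly, and Proposition~\ref{key-characterization} then upgrades the limiting inequality to a strict one. With this observation inserted, your open--closed--connected argument on $[0,1]$ becomes equivalent to the paper's argument via $\t_*=\sup\{\e\,:\,\dots\}$.
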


\begin{proof}
For $\t\in [0,1]$,  we define 
$$
V_\t : B \to \R, \qquad   V_\t(x)=\t V(x),
$$
and we let  $w_{2,\t} $ be an  eigenfunction  associated  to  $\s_2(V_\t)$. By Theorem \ref{th:-main-nonzero}, we may normalize $w_{2,\t}$ such that
\begin{equation}
  \label{eq:normalization-w-2-t}
\|w_{2,\t}\|_{L^2(B)}=1 \quad \text{and}\quad w_{2,\t}(0)>0\qquad \text{for all $\t \in [0,1]$.}  
\end{equation}
Applying Lemma \ref{lem:vonve-eig-eig}, Theorem \ref{th:-main-nonzero} and (\ref{eq:normalization-w-2-t}),  we find  that, for every $\ov \t\in [0,1]$, 
\be\label{eq:convetbar}
w_{2,\t} \to w_{2,\ov \t} \quad\textrm{ in $C (\ov B)$ }\quad    \quad\textrm{ as $\t\to \ov \t$.} 
\ee
Moreover, for all $\t \in [0,1]$, the function $ w_{2,\t}$ changes sign at most twice in the radial variable by Corollary~\ref{lem-nod-dom-simple-corollary}.  In addition $w_{2,0}(0)\int_B w_{2,0} dx< 0$  by Proposition~\ref{eq:second-eigen-class}.
Therefore from \eqref{eq:convetbar}   and Proposition \ref{key-characterization}, there exists $\e\in (0,1]$ such that $w_{2,\t}$ changes sign  only once  in the radial variable    for all $\t\in [0, \e]$. 
We define 
 $$
 \t_*:=\sup \{\e\in [0,1]\,:\,  \textrm{ $w_{2,\t}$ changes sign only once for all $\t\in [0,\e]$}\}.
$$
By definition of $\t_*$, \eqref{eq:convetbar} and Proposition \ref{key-characterization}, we see that $w_{2,\t_*}$ changes sign only once.  In particular  
\be\label{eq:contr-ar-c1}
 w_{2,\t_*}(0)\int_B w_{2,\t_*} dx< 0.
\ee
We claim that $\t_*=1$.  Indeed,  if we had $\t_*<1$, then Proposition \ref{key-characterization} would yield
$$
w_{2,\t}(0)\int_B w_{2,\t} dx\geq 0 \quad\textrm{ for all $\t\in (\t_*,1)$. }
$$  Letting $\t \searrow \t_*$ in the above inequality and using  \eqref{eq:convetbar},  we get  $w_{2,\t_*}(0)\int_B w_{2,\t_*} dx\geq 0$ which contradicts \eqref{eq:contr-ar-c1}.   As consequence \eqref{eq:contr-ar-c1} holds with $\t_*=1$.  Combining this with  Proposition \ref{key-characterization} and Theorem \ref{th:-main-nonzero},   we conclude that  $    w_{2,1} \in \bigl\{ \frac{w_2}{\|w_2\|_{L^2(B)}}; \frac{-w_2}{\|w_2\|_{L^2(B)}}\bigr\}$ changes sign precisely once in the radial variable, as claimed. 
\end{proof}

\begin{remark}\label{rem:Enier-space}
  The continuity argument we use in the proof of  Theorem~\ref{main-theorem} is inspired by the work of Frank, Lenzmann and Silvestre \cite{FLS}.  However, our arguments here allow to simplify the proof in \cite{FLS} of the property that, if $V$ is a nondecreasing radial H\"older continuous potential, a simple second radial eigenfunction of $\Ds+V$ in $\R^N$ changes sign precisely once . Note that this property is established in \cite{FLS} via a continuity argument along a one parameter family  of equations $(-\D)^{s_\t}+V_\t$ which interpolates between $\Ds+V$ and $-\Delta + V_\e$ with $V_\e(x)=\e e^{-|x|^2})$ for some $\e<0$.
Considering the corresponding branch of eigenfunctions $w_{2,\t}$, the expansion of the Green function of the operator $\Ds +1$ on $\R^N$ is used in \cite{FLS} to derive an open condition given by  the sign of $ \int_{\R^N} w_{2,\t_*}dx$, while here we simply observe  in Proposition~\ref{key-characterization} that this sign is given by $\lim \limits_{t\to \infty}t^N W_{2,\t}(0,t)$ where $W_{\t}$ is the $s_{\t_*}$-harmonic extension of $w_{2,\t}$.  
\end{remark}

With the help of a new local Hopf-type Lemma for the $s$-harmonic extension given in Theorem~\ref{new-Hopf-lemma-simple-version} in the appendix, we shall now prove that the fractional normal derivative of a radial second eigenfunction of  \eqref{eq:1.1-Lin} is nontrivial.  

 \begin{proposition}\label{key-characterization-fractional derivative}
Let $V$ satisfy (\ref{main-assumption-V}), and let $w_2 \in \cH^s_{rad}(B) \setminus \{0\}$ be a radial solution to  \eqref{eq:1.1-Lin} with $\s=\s_2(V)$.
Then we have 
  \begin{equation}
    \label{key-characterization-iii}
w_2(0)\psi_{w_2}(1)<0,
\end{equation}
 where $\psi_{w_2}(1):=\displaystyle \lim_{|x| \nearrow 1}\frac{w_2(x)}{(1-|x|)^s}$.
\end{proposition}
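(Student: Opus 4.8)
The plan is to deduce Proposition~\ref{key-characterization-fractional derivative} from the Hopf-type statement of Theorem~\ref{th:Boundary-UCP}, applied to the $s$-harmonic extension of $w_2$. First I would normalize. By Theorem~\ref{th:-main-nonzero} we have $w_2(0)\neq 0$, and it is enough to treat the case $w_2(0)>0$, the case $w_2(0)<0$ being symmetric (apply the conclusion to $-w_2$, which by Theorem~\ref{th:-main-1-sign-change} again changes sign precisely once). By Theorem~\ref{th:-main-1-sign-change} there is $r_0\in(0,1)$ with $w_2>0$ on $B_{r_0}$ and $w_2<0$ on $B\setminus\overline{B_{r_0}}$; by radial continuity $w_2\equiv 0$ on $\partial B$. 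Since $w_2$ is radial, $\psi_{w_2}(1)=\liminf_{r\nearrow 1}\frac{\ti w_2(r)}{(1-r)^s}$, and as $\ti w_2<0$ near $r=1$ this is automatically $\le 0$. Hence it suffices to produce a constant $c>0$ and an $r_1\in(r_0,1)$ with
\[
-w_2(x)\ \ge\ c\,(1-|x|)^s \qquad\text{for $r_1\le |x|<1$,}
\]
since this forces $\psi_{w_2}(1)\le -c<0$ and therefore $w_2(0)\psi_{w_2}(1)<0$.

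To obtain this lower bound I would pass to the $s$-harmonic extension $W_2$ of $w_2$ and set $U:=-W_2$. By \eqref{eq:extens1} the function $U$ is continuous on $\ov{\RNp}$ and solves $\div(t^{1-2s}\n U)=0$ in $\RNp$, with trace $-w_2$ on $\R^N\times\{0\}$. Fixing any $x_0\in\partial B$ and a small half-ball $\cN=\{(x,t)\,:\,|(x,t)-(x_0,0)|<\rho,\ t>0\}$, the flat part $\cN\cap(\R^N\times\{0\})$ splits into the Dirichlet part $\cN\cap(\R^N\setminus B)$, on which $U\equiv0$, and the Neumann part $\cN\cap B$, on which, using \eqref{eq:Ext-Ds-lim} together with $\Ds w_2=(\s_2-V)w_2$,
\[
-\lim_{t\to 0}t^{1-2s}\de_t U(x,t)\ =\ a_s\Ds(-w_2)(x)\ =\ a_s\bigl(\s_2-V(x)\bigr)U(x,0).
\]
Thus $U$ solves precisely a degenerate mixed Dirichlet/Robin boundary value problem of the type treated in Theorem~\ref{th:Boundary-UCP} near the interface point $(x_0,0)$, with Robin coefficient $a_s(\s_2-V)$ controlled by the assumption $V\in L^q(B)$ (and bounded in the application $V=-pu^{p-1}$). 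Moreover, once $\rho<1-r_0$ we have $\cN\cap B\subset B\setminus\overline{B_{r_0}}$, so $U>0$ on the Neumann part and in particular $U\not\equiv0$ there.

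The step I expect to be the main obstacle is verifying that $U\ge 0$ on a full relative neighborhood of $(x_0,0)$ in $\ov{\RNp}$ (equivalently, that $-W_2\ge0$ near $\partial B\times\{0\}$): a maximum-principle argument in $\cN$ alone does not control $U$ on the spherical part of $\partial\cN$. I would establish this from the Poisson representation \eqref{eq:Posso-Ker}. For $(x,t)$ close to $(x_0,0)$, the contribution of the positive part $w_2^+\big|_{B_{r_0}}$ to $U(x,t)$ is $O(t^{2s})$ with a constant bounded in terms of $\dist(x,B_{r_0})^{-N-2s}\le(1-r_0-\rho)^{-N-2s}$, while the contribution of $w_2^-$, supported on the annulus $B\setminus\overline{B_{r_0}}$ adjacent to $x_0$, is the dominant term; hence, choosing $\rho$ small relative to $1-r_0$, one gets $U\ge 0$ on $\cN$, and then $U>0$ in $\cN\cap\RNp$ by the strong maximum principle for $\div(t^{1-2s}\n\,\cdot\,)$.

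With these hypotheses verified, Theorem~\ref{th:Boundary-UCP} yields
\[
\liminf_{(x,t)\to(x_0,0)}\ \frac{U(x,t)}{\bigl(\dist((x,t),\R^N\setminus B)\bigr)^s}\ >\ 0,
\]
and restricting to $t=0$ with $|x|\nearrow1$, and using radial symmetry to recognize this as a limit in the radial variable, gives the desired bound $-w_2(x)\ge c(1-|x|)^s$ for $|x|$ near $1$. By the reduction in the first paragraph this proves $w_2(0)\psi_{w_2}(1)<0$. Besides the positivity step above, the only further technical point to check is that the Robin coefficient $a_s(\s_2-V)$, whose negative part lies in $L^q$ near $\partial B$, meets the precise assumptions imposed in Theorem~\ref{th:Boundary-UCP}.
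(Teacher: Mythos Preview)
Your overall strategy is sound: normalize to $w_2(0)>0$, reduce to $\psi_{w_2}(1)<0$, and invoke Theorem~\ref{th:Boundary-UCP} on $U=-W_2$. The gap is exactly where you flag it, and your proposed fix does not close it. The nonnegativity of $U$ on a full half-neighbourhood of $(x_0,0)$ cannot be extracted from the Poisson representation as you sketch. Your heuristic that ``the contribution of $w_2^-$ is the dominant term'' is not justified: for $(x,t)$ with $|x|\ge 1$ and $t$ small, the $w_2^+$-contribution to $W_2$ is of order $t^{2s}$, as you say, but the contribution of $w_2^-$ coming from any sub-annulus bounded away from $\partial B$ is \emph{also} of order $t^{2s}$, and the two constants cannot be compared a priori. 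A contribution of strictly lower order from $w_2^-$ (namely $\sim t^s$, which would indeed dominate) would require $w_2^-(y)\ge c(1-|y|)^s$ near $\partial B$, i.e.\ $\psi_{w_2}(1)<0$, which is precisely the conclusion you want. In fact, for a generic continuous function with the same sign pattern as $w_2$ (nonnegative on $B_{r_0}$, nonpositive on $B\setminus B_{r_0}$, zero outside $B$) one can make the extension strictly positive near $(x_0,0)$ simply by enlarging $\|w^+\|_{L^1}$; so some eigenfunction-specific information must enter, and your Poisson argument uses none. (A smaller point: at this stage only $w_2\le 0$ on the annulus is available, not the strict negativity you invoke---that is established later in the paper, using the present proposition.)

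The paper circumvents this by an indirect argument that brings in global information. Assume $\psi_{w_2}(1)=0$; then Theorem~\ref{th:Boundary-UCP}, applied contrapositively to $-W_2$, forces $W_2$ to take \emph{positive} values in every neighbourhood of $(e_1,0)$. Together with $w_2(0)>0$ and path-connectedness of $\{\tilde W>0\}$ (Lemma~\ref{lem-nod-dom-simple}), this yields a curve in $\{\tilde W>0\}$ joining a point near $(0,0)$ to a point near $(1,0)$ in the radial half-plane. On the other hand, Proposition~\ref{key-characterization} gives $\int_B w_2<0$, and by \eqref{eq:W20w20} this produces $t_*>0$ with $\tilde W(0,t_*)<0$; path-connectedness of $\{\tilde W<0\}$ then gives a curve in $\{\tilde W<0\}$ from $(0,t_*)$ to some $(r_*,0)$ in the annulus. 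By Lemma~\ref{sec:topological-lemma-third-variant} the two curves must intersect, which is impossible. The ingredients you are missing are thus the sign of $\int_B w_2$ and the two-nodal-domain structure of $W_2$; the Poisson estimate alone does not substitute for them.
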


\begin{proof}
  By Theorem~\ref{th:-main-nonzero} we may, replacing $w_2$ by $-w_2$ if necessary, again assume that
  \begin{equation}
    \label{eq:without-loss-positive-1}
w_2(0)> 0.       
  \end{equation}
  By Theorem~\ref{th:-main-1-sign-change}, the equivalent properties of Proposition~\ref{key-characterization} are satisfied.
  Let $W_2$ be the $s$-harmonic extension of $w_2$, and let $\ti W: \ov{\R_+ \times \R_+} \to \R$ be defined as in the proof of Proposition~\ref{key-characterization}, i.e., $\ti W(|x|,t)=W_2(x,t)$. Moreover, we consider again the path connected sets $\cO_{\pm}:= \{(x,t) \in \ov{\R_+ \times  \R_+}\,:\, \pm W_2>0\}$

By (\ref{key-characterization-ii}), \eqref{eq:W20w20} and (\ref{eq:without-loss-positive-1}),  there must exist $t_*>0$ such that 
$
\ti W(0,t_*)<0. 
$
Moreover, there exists $r_* \in (0,1)$ such that $\ti W(r_*,0)<0$. By the path connectedness of $\cO_-$, there exists a continuous curve $\eta:[0,1]\to \cO_-$ with $\eta(0)=(0,t_*)$ and $\eta(1)=(r_*,0)$. Since $w_2$ changes sign precisely once in the radial variable, we have that  $\psi_{w_2}(1)\leq 0$.   

Let us  now assume by contradiction that $\psi_{w_2}(1)=0$. 

We claim that $W_2$ takes positive values in every relative neighborhood of the point $(e_1,0)$ in $\overline{\R^{N+1}_+}$ in this case, where $e_1$ denotes the first coordinate vector in $\R^N$. Indeed, suppose by contradiction that $W_2\leq 0$ in some relative neighborhood $N$ of $(e_1,0)$ in $\overline{\R^{N+1}_+}$. We note that $W_2 \not \equiv 0$ in $N$, since otherwise $W_2 \equiv 0$ in $\ov{\R^{N+1}_+}$ by unique continuation (see e.g. \cite{Fall-Felli}) and therefore $w_2 \equiv 0$, which is impossible. Hence $W_2 \le 0$, $W_2 \not \equiv 0$ in $N$, and therefore Theorem~\ref{new-Hopf-lemma-simple-version} in the appendix implies that
$$
\psi_{w_2}(1)=\lim_{r\searrow 0} \frac{w_2((1-r)e_1)}{r^s}<0, 
$$
contrary to our current assumption. The contradiction shows that the function $W_2$ takes positive values in every relative neighborhood of the point $(e_1,0)$ in $\overline{\R^{N+1}_+}$, as claimed. As a consequence, $\ti W$ takes positive values in  $Q_\rho^+:=\{(r,t)\in {\R_+}\times \R_+\,:\,  |(r,t)-(1,0)|<\rho \}$ for  every  $\rho>0$.  Therefore letting $d=\textrm{dist}((1,0),\eta([0,1]))>0$,   there exists 
\be \label{eq:pos-near-boundary}
z\in   Q_{d/2}^+  \qquad\textrm{with $\ti W (z)>0$.}
\ee
On the other hand, by (\ref{eq:without-loss-positive-1}) there exists  $\e\in (0, r_*)$ such that $ \ti W(\e,0)>0$, and by the path connectedness of $\cO_+$ there exists a continuous curve $\g :[0,1]\to \cO_+$ joining the points $(\e,0)$ and $z$. By Lemma~\ref{sec:topological-lemma-third-variant} in the appendix applied to the  points $t_*,\e,r_*,1$, the curves $\eta$ and $\gamma$ intersect. This however is impossible since $\cO_+ \cap \cO_- = \varnothing$. The contradiction yields $\psi_{w_2}(1)< 0$, and together with (\ref{eq:without-loss-positive-1}) the claim follows.
\end{proof}

\begin{proof}[Proof of Theorem~\ref{main-theorem} (completed)]
  Let $V$ satisfy \eqref{main-assumption-V}. By Theorem~\ref{th:-main-nonzero}, the eigenvalue $\s_2(V)$ is simple, and every associated eigenfunction $w_2=w_2(|x|)$ satisfies $w_2(0) \not = 0$. Moreover, by Theorem~\ref{th:-main-1-sign-change} we may assume, after replacing $w_2$ by $-w_2$ if necessary, that there exists $r_0 \in (0,1)$ with the property that   $w_2 \ge 0$, $w_2 \not \equiv 0$ on $B_{r_0}$ and $w_2 \le 0$, $w_2 \not \equiv 0$ on $B \setminus \overline{B_{r_0}}$.
  Then we may follow the second part of the proof of Theorem~\ref{th:-main-nonzero} to see that $w_2^+$ coincides with its Schwarz symmetrization, which implies that $w_2\big|_{B_{r_0}}$ is decreasing in the radial variable. In addition, the property \eqref{eq:main-theorem-hopf-property} follows from Proposition~\ref{key-characterization-fractional derivative}.

  Finally, fractional unique continuation (see \cite{Fall-Felli}) implies that
  \begin{equation}
    \label{eq:fractional-uniq-cont-conseq}
  \text{$w_2$ is nonzero on a dense (open) subset of $B$.}  
  \end{equation}
   Since $w_2\big|_{B_{r_0}}$ is decreasing in the radial variable, we thus conclude that $w_2>0$ on $B_{r_0}$.
  It thus remains to show that $w_2<0$ in $B \setminus \overline{B_{r_0}}$, i.e. $w_2 <0$ in $(r_0,1)$ as a function of the radial variable. Suppose by contradiction that there exists $r_3 \in (r_0,1)$ with $w_2(r_3)=0$. By (\ref{eq:fractional-uniq-cont-conseq}), there exist points $r_1 \in (0,r_0)$, $r_2 \in (r_0,r_3)$ and $r_4 \in (r_3,1)$ with $w_2(r_1)>0$, $w_2(r_2)<0$ and $w_2(r_4)<0$.  
  Let $W_2$ be the $s$-harmonic extension of $w_2$, and let again $\ti W: \ov{\R_+ \times \R_+} \to \R$ be defined by $\ti W(|x|,t)=W_2(x,t)$. Moreover, we consider again the path connected sets $\cO_{\pm}:= \{(x,t) \in \ov{\R_+ \times  \R_+}\,:\, \pm W_2>0\}$. 
We now fix a continuous curve $\gamma:[0,1] \to \cO_-$ joining the points $(r_2,0)$ and $(r_4,0)$. Since $w_2(r_3)=\ti W(r_3,0)=0$, we have $(r_3,0) \not \in \gamma([0,1])$ and therefore we may, by Lemma~\ref{zero-sign-change}, choose a point $z  \in \overline{\R_+ \times \R_+}$ with $\ti W(z)>0$ and $|z-(r_3,0)|< \textrm{dist}((1,0),\gamma([0,1]))$. By the path connectedness of $\cO_+$, there exists a continuous curve $\eta:[0,1] \to \cO_+$ joining the points $(r_1,0)$ and $z$. Now Lemma~\ref{sec:topological-lemma-second-variant}, applied to the points $r_1<r_2<r_3<r_4$, shows that $\gamma$ and $\eta$ must intersect, which is impossible as $\cO_+ \cap \cO_- = \varnothing$. The contradiction shows that $w_2<0$ in $B \setminus \overline{B_{r_0}}$, as required.
\end{proof}

%

\section{Nondegeneracy and uniqueness of   ground state solutions}
\label{sec:nond-uniq-m_1}

In this section we complete the proof of Theorem~\ref{th-nondeg}.  For a radial function $v \in C^s(\R^N)$ with $v \equiv 0$ on $\R^N \setminus B$, we define 
$$
\psi_v \in L^\infty(0,1), \qquad \psi_v(|x|):= \frac{v(x)}{\text{dist}(x,\R^N\setminus B)^s}= \frac{v(x)}{(1-|x|)^s} \qquad \text{for $x \in B$.}
$$
and, as before, we define $\psi_v(1):= \liminf \limits_{\rho \nearrow 1}\psi_v(\rho)$.

%
We start by collecting some properties of solutions to \eqref{eq:1.1}. Throughout this section, we let $p \in (1,2^*_s-1)$ and $\l\ge 0$ be fixed, and we let $u\in \cH^s(B)$ denote a fixed solution of \eqref{eq:1.1}.

We recall the following well-known properties of $u$.
\begin{lemma}\label{lem:qual-sol}
The following statements hold:
\begin{enumerate}
\item[(i)] $u\in C^\infty(B)\cap C^s(\R^N)$ and $u$ is radially symmetric and  strictly decreasing.
\item[(ii)]  $\psi_u$ extends to a continuous function on $[0,1]$, and $\psi_u(1)>0$.
\end{enumerate}
\end{lemma}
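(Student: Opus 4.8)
\textbf{Proof plan for Lemma~\ref{lem:qual-sol}.}

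The plan is to prove both statements by combining the regularity theory collected in Lemma~\ref{lem:reg-bdr-ok} with the known moving-plane (or rearrangement) results for the fractional Dirichlet problem in the ball. For part (i), I would first observe that $u\in\cH^s(B)$ solves $\Ds u=f$ in $B$ with $f=u^p-\l u$. Since $u\in\cH^s(B)\hookrightarrow L^{2^*_s}(B)$ and $p<2^*_s-1$, a standard bootstrap (Moser iteration / the interior and boundary estimates of \cite{RS-Duke,RS16a}, as packaged in Lemma~\ref{lem:reg-bdr-ok}) first gives $u\in L^\infty(B)$, hence $f\in L^\infty(B)$, and then Lemma~\ref{lem:reg-bdr-ok}(ii) yields $u\in C^s(\R^N)$. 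Once $u$ is bounded, $f$ is locally H\"older continuous in $B$ (indeed $u>0$ in $B$ by the strong maximum principle, so $u^p$ is as smooth as $u$), and Lemma~\ref{lem:reg-bdr-ok}(iii) together with a further bootstrap gives $u\in C^\infty(B)$. Radial symmetry and strict radial monotonicity follow from the moving-plane method for the fractional Laplacian applied to positive solutions of $\Ds u=f(u)$ in the ball with $f(0)\ge 0$ (this is the fractional analogue of Gidas--Ni--Nirenberg; it applies here since $f(t)=t^p-\l t$ and $u$ is positive and vanishes outside $B$); the strict monotonicity in the radial variable is part of the conclusion of that argument, using the strong maximum principle in the antisymmetric setting.

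For part (ii), the claim that $\psi_u$ extends continuously to $[0,1]$ is exactly the boundary regularity statement $u/d^s\in C^{s-N/p}(\overline B)$ from Lemma~\ref{lem:reg-bdr-ok}(ii), once we know $f=u^p-\l u\in L^p(B)$ for some $p>N/s$, which holds since $u\in L^\infty(B)$. Restricting this H\"older-continuous function $x\mapsto u(x)/(1-|x|)^s$ to the radial variable gives a continuous function on $[0,1]$ equal to $\psi_u$, in particular $\psi_u(1)=\lim_{\rho\nearrow1}\psi_u(\rho)$ exists. To see that this limit is strictly positive, I would invoke the fractional Hopf boundary lemma for the Dirichlet problem (e.g. \cite[Proposition 4.11]{CS}, already cited in the excerpt, applied to the $s$-harmonic extension $U$ of $u$): since $u>0$ in $B$, $u\equiv0$ on $\R^N\setminus B$, and $u$ is a nonnegative supersolution near $\partial B$ (indeed $\Ds u=u^p-\l u$, which is bounded), the extension $U$ satisfies $-\lim_{t\to0}t^{1-2s}\partial_t U>0$ on $\partial B$ in the appropriate sense, which translates into $\liminf_{\rho\nearrow1}u(x)/(1-|x|)^s>0$, i.e. $\psi_u(1)>0$.

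The only genuinely non-routine point is making the fractional Hopf lemma deliver the \emph{strict positivity of the limit} $\psi_u(1)$ rather than just nonnegativity; but since $u$ is a positive solution of a nice semilinear equation with $C^s$-regularity up to the boundary, this is a direct application of the boundary estimates plus the Hopf-type statement in \cite{CS,RS-Duke}, and there is essentially no obstacle here. One could alternatively cite directly the known sharp boundary behaviour $u(x)\asymp(1-|x|)^s$ for positive solutions of the fractional Lane--Emden--type equation in a ball, which is by now standard. I would therefore present the proof as: bootstrap regularity ($\Rightarrow C^s(\R^N)\cap C^\infty(B)$), moving planes ($\Rightarrow$ radial and strictly decreasing), boundary H\"older estimate for $u/d^s$ ($\Rightarrow \psi_u\in C[0,1]$), and the fractional Hopf lemma ($\Rightarrow\psi_u(1)>0$).
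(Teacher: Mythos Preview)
Your proposal is correct and follows essentially the same route as the paper: bootstrap to $L^\infty$ and then to $C^s(\R^N)\cap C^\infty(B)$, moving-plane symmetry and monotonicity, boundary H\"older regularity for $u/d^s$, and a fractional Hopf lemma for $\psi_u(1)>0$. The only minor point is that \cite[Proposition~4.11]{CS} is the Hopf-type statement for the extension at interior points of $\{t=0\}$, not at $\partial B$; the paper instead cites the fractional boundary Hopf lemma \cite[Proposition~3.3]{FJ-2015} (or one could use the boundary estimates of \cite{RS-Duke}), which directly gives $\liminf_{|x|\nearrow 1} u(x)/(1-|x|)^s>0$.
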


\begin{proof}
As noted in \cite{FW-uniq1D}, we can apply \cite[Proposition 3.1]{XJV} to get $u\in L^\infty(B)$. Then, by a classical bootstrap argument using interior and boundary  regularity (see \cite{S} and \cite{RS16a}), we find that $u\in C^s(\R^N)\cap C^\infty(B)$, and that $\psi_u$ extends to a continuous function on $[0,1]$. From \cite[Corollary 1.2]{JW} we deduce that $u$ is radially symmetric   and  strictly decreasing in the radial variable. Finally, $\psi_u(1)>0$ follows from the fractional Hopf lemma,  see e.g. \cite[Proposition 3.3]{FJ-2015}.
\end{proof}

As a consequence, we note that $V=-pu^{p-1} $ satisfies assumption~(\ref{main-assumption-V}), with $q=+\infty$. The following lemma has been proved in \cite{FW-uniq1D} in the case $N=1$. The proof is almost the same in the multidimensional case, but we prefer to give the details for the convenience of the reader.
\begin{lemma}
  \label{lemma-fractional-integration-by-parts}
  Let $u\in \cH^s(B)$ be a  solution to \eqref{eq:1.1}, and let $w\in \cH^s_{rad}(B) $ be a radial solution of
  \be
  \label{eq:linearized-op-w}
\Ds w-pu^{p-1}w =-\l w \qquad\textrm{ in $B$.} 
\ee
Then  $\psi_w\in C ([0,1])$ and 
\be\label{eq:perpup}
\int_B u^p wdx=0 \qquad \text{and}\qquad [u,w]_s = -\l \int_{B}w u\,dx. 
\ee  
Moreover, the fractional normal derivatives  $\psi_u(1)$ and $ \psi_w(1)$ of $u$ and $w$ satisfy the identity
\be \label{eq:from-Pohoza}
2s \l    \int_B  u w dx=-\G^2(1+s)|\de B| \psi_u(1)\psi_w(1).
\ee
\end{lemma}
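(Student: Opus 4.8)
The plan is to establish the three assertions in order: first the orthogonality relations \eqref{eq:perpup}, then the regularity $\psi_w \in C([0,1])$, then the boundary identity \eqref{eq:from-Pohoza}. For \eqref{eq:perpup}, I would first note that $V=-pu^{p-1}$ satisfies \eqref{main-assumption-V} (with $q=+\infty$), as already observed after Lemma~\ref{lem:qual-sol}, so $w\in C^s(\R^N)\cap C^{2s+\beta}_{loc}(B)$ by Lemma~\ref{lem:reg-bdr-ok}, and likewise $u$ enjoys the regularity of Lemma~\ref{lem:qual-sol}. Testing \eqref{eq:linearized-op-w} (in weak form) with $v=u\in\cH^s(B)$ gives $[w,u]_s - p\int_B u^p w\,dx = -\lambda\int_B w u\,dx$; testing \eqref{eq:1.1} with $v=w$ gives $[u,w]_s + \lambda\int_B u w\,dx = \int_B u^p w\,dx$. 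Since $[w,u]_s=[u,w]_s$, subtracting yields $(p-1)\int_B u^p w\,dx=0$, hence $\int_B u^p w\,dx=0$, and then the second relation in \eqref{eq:perpup} follows from the tested form of \eqref{eq:1.1}.

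For the regularity $\psi_w\in C([0,1])$: since $V=-pu^{p-1}\in L^\infty(B)$ and the right-hand side $-\lambda w\in L^\infty(B)$, Lemma~\ref{lem:reg-bdr-ok}(ii) (applied with $p$ there taken large) gives $w/d^s\in C^{s-N/p}(\overline B)$; specializing to radial functions shows $\psi_w$ extends continuously to $[0,1]$. The main work is \eqref{eq:from-Pohoza}, which I expect to be the principal obstacle. The natural tool is a fractional integration-by-parts (Pohozaev-type) identity on the ball: for the pair $u,w$ solving equations of the form $\Ds u = f$, $\Ds w = g$ in $B$ with $u=w=0$ outside, one has
\[
\int_B (f w - g u)\,dx = 0 \quad\text{if there were no boundary term,}
\]
but the correct identity carries a boundary contribution of the form $\Gamma^2(1+s)\int_{\partial B}\psi_u\,\psi_w\, (x\cdot\nu)\,d\sigma$ coming from the fractional analogue of the Rellich–Pohozaev argument. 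Concretely, I would apply the fractional integration-by-parts formula relating $\int_B \bigl(w\,\Ds u - u\,\Ds w\bigr)$ to a boundary integral of $\psi_u\psi_w$ over $\partial B$ — the precise statement being the one used in \cite{FW-uniq1D} (and traceable to Ros-Oton–Serra type computations). Here $f=\Ds u = u^p-\lambda u$ and $g=\Ds w = pu^{p-1}w - \lambda w$. Then
\[
\int_B\bigl(w\,\Ds u - u\,\Ds w\bigr)dx
= \int_B\bigl(w(u^p-\lambda u) - u(pu^{p-1}w-\lambda w)\bigr)dx
= (1-p)\int_B u^p w\,dx = 0
\]
by the first part of \eqref{eq:perpup}; wait — this would force the boundary term to vanish, which is \emph{not} what \eqref{eq:from-Pohoza} says, so the correct identity must instead be a Pohozaev identity tested against the dilation field $x\cdot\nabla$, not the plain symmetry of $\Ds$.

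So the right approach is: apply the fractional Pohozaev identity in the form $\int_B (x\cdot\nabla w)\,\Ds u\,dx$ (or the symmetric version) which produces $\int_B (x\cdot\nabla w)(u^p-\lambda u)\,dx$ on one side and, on the other, $\frac{\Gamma^2(1+s)}{2}\int_{\partial B}\psi_u\psi_w\,(x\cdot\nu)\,d\sigma$ plus interior terms that, after using the equation for $u$, collapse. Using $x\cdot\nu=1$ on $\partial B$, $\int_{\partial B}d\sigma=|\partial B|$, integrating by parts in the interior terms (using $w=0$ on $\partial B$ to kill stray boundary contributions), and invoking $\int_B u^p w\,dx=0$ and $\int_B u w\,dx$ explicitly, the $u^p$-contributions cancel and one is left precisely with $2s\lambda\int_B uw\,dx = -\Gamma^2(1+s)|\partial B|\,\psi_u(1)\psi_w(1)$. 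The delicate points are (a) justifying the fractional Pohozaev identity for the pair $(u,w)$ given only $C^s$ boundary regularity and $C^{2s+\beta}_{loc}$ interior regularity, which is exactly the generality in which the formula from \cite{FW-uniq1D} is stated, and (b) correctly bookkeeping the interior integration-by-parts so that the nonlinear terms drop out using $\int_B u^p w\,dx=0$. I would carry out (b) by writing $\int_B(x\cdot\nabla w)u^p\,dx = -\int_B w\,\div(x\,u^p)\,dx = -N\int_B u^p w\,dx - p\int_B u^p w\,(x\cdot\nabla u/u)\,dx$ and similarly for the $\lambda uw$ and the $w$-equation contributions, then combining with the analogous identity obtained by swapping the roles of $u$ and $w$; the term $\int_B u^p w\,dx$ vanishes and the remaining $\lambda$-terms assemble into $2s\lambda\int_B uw\,dx$ after using that the Pohozaev constant for $\Ds$ on the radial profile is $\frac{2s-N}{2}$ for the Dirichlet energy and $s$ relative to the mass term.
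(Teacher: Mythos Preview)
Your proposal is correct and follows essentially the same route as the paper: the orthogonality \eqref{eq:perpup} comes from testing the two equations against each other, the regularity of $\psi_w$ from Lemma~\ref{lem:reg-bdr-ok}(ii), and \eqref{eq:from-Pohoza} from the bilinear fractional Pohozaev identity of Ros-Oton--Serra \cite[Theorem~1.9]{RX-Poh} with the dilation field $x\cdot\nabla$. The only cosmetic difference is that the paper streamlines your bookkeeping in (b) by observing, after one integration by parts using both equations, that $\int_B (x\cdot\nabla u)\,\Ds w\,dx + \int_B (x\cdot\nabla w)\,\Ds u\,dx = -N[u,w]_s$ directly, which combined with the Pohozaev identity and \eqref{eq:perpup} gives \eqref{eq:from-Pohoza} in one line.
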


\begin{proof}
  We first note that it follows from Lemma~\ref{lem:reg-bdr-ok} that $w \in C^{s}(\R^N) \cap C^{2s+\a}_{loc}(B)$ and $x\mapsto\frac{w(x)}{(1-|x|)^s}  \in C^\a(\ov B)$ for some $\a>0$.  Next we note that the weak formulations of (\ref{eq:1.1RN}) and \eqref{eq:linearized-op-w} yield that
  $$
  \int_{B}u^p w\,dx=  [u,w]_s + \l \int_{B}w u\,dx = p \int_{B}u^p w\,dx  
$$
and therefore \eqref{eq:perpup} follows. Moreover, the bilinear version of the fractional integration by parts formula given in \cite[Proposition 1.6]{RX-Poh} now  yields
\begin{align}
&\int_{B}\n u\cdot x\Ds w\,dx+ \int_{B}\n w\cdot x\Ds u\,dx \nonumber\\
&=-\G^2(1+s)\int_{\partial B}\psi_u\psi_w\, d\sigma-(N-2s)[u,w]_s \label{eq-integration-by-parts2}
\end{align}
By integration by parts and \eqref{eq:perpup}, we have 
\begin{align*}
&\int_{B}\n w\cdot x\Ds u\,dx =\int_{B}\n w\cdot x(-\l u+u^p)\,dx \\
&=-N\int_{B} w (-\l u+u^p)\,dx -\int_{B} \n u\cdot x(-\l w+u^{p-1}w)\,dx \\
&=-N[u,w]_s -\int_{B} \n u\cdot x\Ds w\,dx.
\end{align*}
Combing this with \eqref{eq-integration-by-parts2}, we deduce that 
$$
 -N[ u,w]_s=-\G^2(1+s)\int_{\partial B}\psi_u\psi_w\, d\sigma-(N-2s)[ u,w]_s.
$$
This and \eqref{eq:perpup} gives \eqref{eq:from-Pohoza}.
\end{proof}

\begin{corollary}\label{lem:multi-c-s}
Let $V = -pu^{p-1}$. Then we have $\s_2(V) \not=-\l$ for the second radial eigenvalue $\s_2(V)$ of (\ref{eq:1.1-Lin}).
\end{corollary}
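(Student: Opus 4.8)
The plan is to argue by contradiction: suppose $\sigma_2(V) = -\lambda$ with $V = -pu^{p-1}$. Then any eigenfunction $w_2 \in \cH^s_{rad}(B)$ associated with $\sigma_2(V)$ is precisely a nontrivial radial solution of the linearized equation \eqref{eq:linearized-op-w}. Since $V$ satisfies \eqref{main-assumption-V} (with $q = +\infty$, as noted right after Lemma~\ref{lem:qual-sol}), I may invoke the structural results of Section~\ref{sec:proof-theor-refm}. First, Theorem~\ref{th:-main-nonzero} gives that $\sigma_2(V)$ is simple and $w_2(0) \neq 0$; after replacing $w_2$ by $-w_2$ if necessary, assume $w_2(0) > 0$. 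Next, Theorem~\ref{th:-main-1-sign-change} tells us $w_2$ changes sign precisely once in the radial variable, and then Proposition~\ref{key-characterization-fractional derivative} yields $w_2(0)\psi_{w_2}(1) < 0$, hence $\psi_{w_2}(1) < 0$.

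The second ingredient is the Pohozaev-type identity \eqref{eq:from-Pohoza} from Lemma~\ref{lemma-fractional-integration-by-parts}, which applies to exactly this $w := w_2$ since $w_2$ solves \eqref{eq:linearized-op-w}. That identity reads
$$
2s\lambda \int_B u w_2\, dx = -\Gamma^2(1+s)|\partial B|\, \psi_u(1)\psi_{w_2}(1).
$$
By Lemma~\ref{lem:qual-sol}(ii) we have $\psi_u(1) > 0$, and we have just established $\psi_{w_2}(1) < 0$, so the right-hand side is strictly positive, forcing $\lambda > 0$ and $\int_B u w_2\, dx > 0$. On the other hand, since $u > 0$ in $B$ and $u$ is strictly radially decreasing (Lemma~\ref{lem:qual-sol}(i)) while $w_2$ changes sign precisely once with $w_2 > 0$ near the origin, I can compare the sign of $\int_B u w_2\,dx$ against the orthogonality relation $\int_B u^p w_2\,dx = 0$ from \eqref{eq:perpup}: writing $r_0$ for the sign-change radius and using that $u^{p-1}(r)$ is strictly decreasing, a standard rearrangement/Chebyshev-type argument gives
$$
0 = \int_B u^p w_2\,dx = \int_{B_{r_0}} u^{p-1}(u w_2)\,dx + \int_{B\setminus B_{r_0}} u^{p-1}(u w_2)\,dx > u^{p-1}(r_0)\int_B u w_2\,dx,
$$
so $\int_B u w_2\,dx < 0$ (the inequality is strict because $w_2$ is nontrivial on both $B_{r_0}$ and $B\setminus \overline{B_{r_0}}$). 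This contradicts $\int_B u w_2\,dx > 0$, completing the proof.

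The only delicate point is handling the case $\lambda = 0$ cleanly: then the left side of \eqref{eq:from-Pohoza} vanishes, forcing $\psi_u(1)\psi_{w_2}(1) = 0$, which is impossible since both factors are nonzero ($\psi_u(1) > 0$ by the fractional Hopf lemma, $\psi_{w_2}(1) < 0$ by Proposition~\ref{key-characterization-fractional derivative}). So the $\lambda = 0$ case is actually immediate and does not even need the orthogonality argument; the rearrangement step is only needed to close off $\lambda > 0$. I expect the main (though still routine) obstacle to be making the strict inequality in the Chebyshev-type comparison airtight — specifically verifying that $w_2 \not\equiv 0$ on each nodal region and that $u^{p-1}$ is genuinely strictly monotone there, both of which follow from Lemma~\ref{lem:qual-sol} and Theorem~\ref{th:-main-1-sign-change}.
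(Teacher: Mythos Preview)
Your proposal is correct and follows essentially the same argument as the paper: a contradiction via the Pohozaev-type identity \eqref{eq:from-Pohoza} combined with $\psi_u(1)>0$, $\psi_{w_2}(1)<0$, and the Chebyshev-type comparison of $\int_B u^p w_2$ against $\int_B u w_2$ using the strict radial monotonicity of $u$. The only cosmetic difference is that the paper cites Theorem~\ref{main-theorem} directly (which packages the sign-change and Hopf properties together) rather than the individual ingredients, and handles the $\lambda=0$ case implicitly in the line ``only possible if $\lambda>0$''.
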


\begin{proof}
  Suppose by contradiction that $\s_2(V)  =-\l$, and let $w \in \cH^s(B)$ be a corresponding eigenfunction, so $w$ satisfies
  \eqref{eq:perpup} and \eqref{eq:linearized-op-w}. Moreover, by Theorem~\ref{main-theorem} we have, after replacing $w$ by $-w$ if necessary, that
 \begin{equation}
   \label{eq:main-theorem-hopf-property-proof-m-1}
\psi_{w_2}(1)<0,
 \end{equation}
 and there exists $r \in (0,1)$ with the property that
 \begin{equation}
   \label{nondegeneracy-proof}
\text{ $w_2>0$ on $B_{r}\qquad$ and $\qquad w_2 < 0$ on $B \setminus \overline{B_{r}}$.}
 \end{equation}
 Since $\psi_u(1)>0$, it follows from \eqref{eq:from-Pohoza} and (\ref{eq:main-theorem-hopf-property-proof-m-1}) that
 $$
 \lambda \int_{B}uw\,dx >0,
 $$
 which, since $\lambda \ge 0$, is only possible if $\lambda>0$ and $\int_{B}uw\,dx>0$. However, from \eqref{eq:perpup} and the fact that  $u$ is radially  symmetric, positive and  strictly decreasing in the radial variable,  for  $e\in \de B$, we obtain
$$
0=\int_B u^p wdx=  \int_{B_{r}} u^p wdx+\int_{B \setminus B_{r}} u^p wdx>u^{p-1}(r e) \int_B uwdx  \qquad \text{with}\quad u^{p-1}(re)>0,
$$
which yields a contradiction. The claim thus follows.
\end{proof}

\begin{theorem}
  \label{sec:nond-uniq-m_1-1}
Suppose that $u$ is a ground state solution of (\ref{eq:1.1}). Then $u$ is nondegenerate, i.e., the equation \eqref{eq:linearized-op-w} does not admit nontrivial solutions $w \in \cH^s(B)$.
\end{theorem}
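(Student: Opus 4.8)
The plan is to reduce the linearized equation \eqref{eq:linearized-op-w} to a radial eigenvalue equation and then to show that $-\l$ cannot occur in the radial Dirichlet spectrum of $\Ds+V$ with $V=-pu^{p-1}$, a potential which satisfies \eqref{main-assumption-V} with $q=+\infty$ by Lemma~\ref{lem:qual-sol}. So suppose, for contradiction, that $w\in\cH^s(B)\setminus\{0\}$ solves \eqref{eq:linearized-op-w}. First I would invoke the nonradial nondegeneracy of positive solutions of \eqref{eq:1.1} established in \cite{FW-uniq1D} (see also \cite{DIS-1}): it guarantees that the kernel of the linearized operator $\Ds+\l-pu^{p-1}$ on $\cH^s(B)$ contains no nonradial functions, hence $w\in\cH^s_{rad}(B)$. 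Consequently $w$ is a nontrivial radial eigenfunction of \eqref{eq:1.1-Lin} for the potential $V=-pu^{p-1}$ with eigenvalue $\s=-\l$, and, by the compact embedding \eqref{eq:compact-embedding} together with standard spectral theory, the min-max levels $\s_k(V)$, $k\ge1$, of \eqref{eq:sig-kV-intro} exhaust the radial Dirichlet spectrum of $\Ds+V$ in $B$, counted with multiplicity and listed in nondecreasing order. Thus $-\l=\s_k(V)$ for some $k\ge1$.

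It then remains to exclude every such $k$. For $k=1$: since $w$ is now known to be radial, Lemma~\ref{lemma-fractional-integration-by-parts} applies and \eqref{eq:perpup} gives $\int_B u^p w\,dx=0$; because $u^p>0$ in $B$ and $w\not\equiv0$, the function $w$ must change sign, whereas every eigenfunction associated with the first (simple) radial eigenvalue has constant sign. Hence $k\ne1$. For $k\ge2$: since $u$ is a ground state solution, the characterization \eqref{eq:ground-state-solutions-definition-variant} gives $\s_2(V)\ge-\l$, so $\s_2(V)\le\s_k(V)=-\l\le\s_2(V)$, which forces $\s_2(V)=-\l$ and contradicts Corollary~\ref{lem:multi-c-s}. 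Since no value of $k$ survives, \eqref{eq:linearized-op-w} admits only the trivial solution, which is the assertion. As an alternative way to rule out $k=1$ (and in fact to place $-\l$ strictly between $\s_1(V)$ and $\s_2(V)$), one may test the Rayleigh quotient in \eqref{eq:sig-kV-intro} against $u$ itself: using \eqref{eq:1.1} one computes $[u]_s^2+\int_B V u^2\,dx=-\l\|u\|_{L^2(B)}^2-(p-1)\int_B u^{p+1}\,dx$, whence $\s_1(V)\le-\l-(p-1)\|u\|_{L^2(B)}^{-2}\int_B u^{p+1}\,dx<-\l$.

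The main obstacle is the reduction to radial solutions in the first step, which is where the deeper input --- the nonradial nondegeneracy from \cite{FW-uniq1D} --- enters; everything after that is an elementary comparison of $-\l$ with the first two radial eigenvalues, via \eqref{eq:perpup}, the ground state inequality \eqref{eq:ground-state-solutions-definition-variant}, and Corollary~\ref{lem:multi-c-s}. A secondary point to check with some care is that the variational levels $\s_k(V)$ really account for all radial eigenvalues of $\Ds+V$, in particular for the one carried by $w$; this follows from the compactness in \eqref{eq:compact-embedding} and the spectral theory of lower semibounded self-adjoint quadratic forms.
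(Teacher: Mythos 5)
Your proposal is correct and follows essentially the same route as the paper: reduce to radial solutions via the nonradial nondegeneracy result of \cite{FW-uniq1D}, then show that $-\l$ cannot be a radial eigenvalue of $\Ds - pu^{p-1}$ by combining the ground state inequality \eqref{eq:ground-state-solutions-definition-variant} with Corollary~\ref{lem:multi-c-s} to get $\s_2(V)>-\l$, and excluding the first eigenvalue (the paper does this by exactly the Rayleigh-quotient computation you give as your ``alternative,'' yielding $\s_1(V)<-\l$; your primary variant via $\int_B u^p w\,dx=0$ and the constant sign of the first eigenfunction is an equally valid one-line substitute). No gaps.
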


\begin{proof}
  We first note that (\ref{eq:ground-state-solutions-definition-variant}) and Corollary~\ref{lem:multi-c-s} imply that $\s_2(V)> -\l$ for $V:= -p u^{p-1}$. In addition, by (\ref{eq:1.1}) we have 
  $$
  [u]^2_s+\int_{B}Vu^2\,dx  = -\lambda \|u\|_{L^2(B)}^2 - (p-1)\int_{B}u^p\,dx < -\lambda \|u\|_{L^2(B)}^2
  $$
  and therefore $\s_1(-p u^{p-1})<-\lambda$ by \eqref{eq:sig-kV-intro}. Hence (\ref{eq:1.1-Lin}) does not admit nontrivial solutions $w \in \cH^s_{rad}(B)$ for $\s = -\l$, and therefore  \eqref{eq:linearized-op-w} does not admit nontrivial solutions in $\cH^s_{rad}(B)$.

It thus remains to show that 
  \begin{equation}
    \label{eq:extra-nonradial-statement}
\text{\eqref{eq:linearized-op-w} does not admit nontrivial solutions $w \in \cH^s(B) \setminus \cH^s_{rad}(B)$.}
  \end{equation}
  In fact, this has been proved independently and simultaneously in the very recent papers \cite{DIS-1,Azahara-Parini,FW-uniq1D}. The proofs in \cite{DIS-1,Azahara-Parini} are based on polarization, while the proof in \cite{FW-uniq1D} is based on a new Picone type identity. Here we give new proof of (\ref{eq:extra-nonradial-statement}) which is shorter than the ones in \cite{DIS-1,Azahara-Parini,FW-uniq1D} and which could be of independent interest.

  Assume by contradiction that a solution $w \in \cH^s(B) \setminus \cH^s_{rad}(B)$ of \eqref{eq:linearized-op-w} exists. Then there exists a hyperplane reflection $\s \in O(N)$ with the property that
  $$
  {\ti w}:=\frac{ w- w\circ\s}{2} \in \cH^s(B) \setminus \{0\}.
  $$
Without loss of generality, after rotating $w$, we may assume that $\sigma$ is the reflection at the hyperplane $\{x_1 =0\}$. Replacing $w$ by ${\ti w}$ or $-{\ti w}$, we may therefore assume that
$w$ is odd with respect to the $x_1$-variable, and that $w^+ \not = 0$ on $B^+$, where $B^\pm:= B \cap H^\pm$ and $H^\pm: = \{x \in \R^N \::\: \pm x_1>0\}$.

Next, let $f = w^+ 1_{B^+} - w^- 1_{B^-}$, and let 
$v \in \cH^s(B)$ be the unique solution of 
$$
(-\Delta)^s v + \lambda v = p u^{p-1}f \quad \text{in $B$}\qquad v = 0 \quad \text{on $\R^N \setminus B$.}
$$
By uniqueness, $v$ is odd with respect to the $x_1$-variable. Moreover, by the antisymmetric weak and strong maximum principles (see Prop. 3.5. and 3.6 in \cite{JW16}) and the antisymmetric Hopf lemma (see Prop. 3.3 in \cite{FJ-2015}), we  have 
\begin{equation}
  \label{eq:antisymmetric-max-cons}
v>0 \quad \text{in $B^+$}\qquad \text{and}\qquad  \frac{v}{\delta^s}>0 \quad \text{on $\Gamma^s:= \partial B \cap H^+.$}
\end{equation}
The weak antisymmetric maximum principle also implies that $v \ge w$ in $B^+$, since the function $v-w \in \cH^s(B)$ is odd in the $x_1$-variable and satisfies
$$
(-\Delta)^s (v-w) + \lambda (v-w) = p u^{p-1}(f-w) \ge 0 \quad \text{in $B^+$},\qquad v-w =0 \quad \text{in $H^+ \setminus B^+$.}
$$
Consequently, we have 
\begin{equation}
  \label{eq:v-ge-f}
v \ge \max\{w,0\}= w^+ = f \qquad \text{in $B^+$.}
\end{equation}
For $\eps>0$, we let $\rho_\eps$ be the standard (radial) mollifier and consider the function 
$$
g_\eps:=p \, \rho_\eps *\! (u^{p-1}f) \in C^\infty_c(\R^N)
$$
which is also odd with respect to the $x_1$-variable.  Moreover, we let  $v_\eps$ be the unique solution to 
$$
(-\Delta)^s v_\eps + \lambda v_\eps =  g_\eps \quad \text{in $B$}\qquad v_\eps = 0 \quad \text{on $\R^N \setminus B$.}
$$
Since $g_\eps\to p u^{p-1} f$ in $L^\infty(B)$ as $\eps \to 0^+$, we also have, by fractional elliptic regularity up to the boundary (see \cite[Theorem 1.3]{RS-Duke}), that  
\be\label{eq:v_ntov-privntov}
v_\eps\to v \quad \text{in $C(\ov B)$} \qquad\textrm{ and }  \qquad \frac{v_\eps}{\delta^s} \to \frac{v}{\delta^s} \quad\textrm{in $C(\partial B)$}\qquad \text{as $\eps \to 0^+$.} 
\ee
Moreover, since $g_\eps\in C^\infty(\ov B)$ for $\eps>0$,  the integration by parts formula in \cite[Theorem 1.9]{RX-Poh} gives
\begin{align}
- &\Gamma(1+s)^2 \int_{\partial B}\frac{u}{\delta^s}\frac{v_\eps}{\delta^s} \nu_1 d\sigma = \int_{B}\Bigl(\de_{x_1} u (-\Delta)^sv_\eps + \de_{x_1} v(-\Delta)^s u\Bigr)dx \nonumber\\ 
  & = \int_{B}\Bigl(\de_{x_1} u (g_\eps -\lambda v_\eps)  + \de_{x_1}v_\eps (u^p-\lambda u)\Bigr)dx \nonumber\\
  &=\int_{B}(\de_{x_1} u) \bigl(g_\eps -   p u^{p-1} v_\eps  \bigr)dx +  \int_B \partial_{x_1}(v_\eps u^p - \lambda v_\eps u )\,dx \nonumber\\
&= \int_{B}(\de_{x_1} u) \bigl(g_\eps -   p u^{p-1} v_\eps  \bigr)dx. \label{eps-to-0}
\end{align}
Here we used in the last step that the function $v_\eps u^p - \lambda v_\eps u \in C(\ov B)$ vanishes on $\partial B$ and its gradient is in $L^1(B)$. Letting $\eps \to 0^+$ in (\ref{eps-to-0}) and using \eqref{eq:v_ntov-privntov} together with the fact that $g_\eps \to p u^{p-1}f$ in $L^\infty(B)$, we get 
\begin{align*}
- \Gamma(1+s)^2 \int_{\partial B}\frac{u}{\delta^s}\frac{v}{\delta^s} \nu_1 d\sigma 
                                                    & = p \int_{B} u^{p-1}(\de_{x_1}u)(f-v)dx.
\end{align*}
Since the integrand $u^{p-1}(\de_{x_1}u)(f-v)$ is an even function with respect to $x_1$ and $u^{p-1} \ge 0$, $\de_{x_1} u\le 0$ and $f-v \le 0$ in $B^+$ by Lemma~\ref{lem:qual-sol} and (\ref{eq:v-ge-f}), it follows that
$u^{p-1}(\de_{x_1}u)(f-v) \ge 0$ in $B$ and therefore 
\begin{equation}
  - \Gamma(1+s)^2 \int_{\partial B}\frac{u}{\delta^s}\frac{v}{\delta^s} \nu_1 d\sigma \ge   0. \label{contra}
\end{equation}
On the other hand, it follows from (\ref{eq:antisymmetric-max-cons}) that
the function $\frac{v}{\delta^s} \nu_1$ is positive a.e. on $\partial B$, since it is even in the $x_1$-variable. Since also $\frac{u}{\delta^s} \equiv \psi_u(1)$ is positive on $\partial B$ by Lemma~\ref{lem:qual-sol}, we have arrived at a contradiction to (\ref{contra}). Hence~(\ref{eq:extra-nonradial-statement}) follows.
\end{proof}

\begin{proof}[Proof of  Theorem \ref{th-nondeg}]
Since ground state solutions solutions of (\ref{eq:1.1}) are nondegenerate for all $p\in (1, 2^*_s-1)$, $\lambda \ge 0$ and since uniqueness of solutions to \eqref{eq:1.1} holds for $p$ close to $1$ by the results in \cite{DIS}, we can use the same branch continuation  argument as in \cite{FW-uniq1D}  to deduce that uniqueness holds for all allowed values of  $p$.
\end{proof}

\begin{remark}
  \label{remark-continuity-nondegeneracy}
The strategy to use nondegeneracy result together with a branch continuation argument to deduce uniqueness results for positive solutions to semilinar problems is inspired by a classical paper of Lin \cite{Lin91} and has been used extensively both in the local (see e.g. \cite{DGP,Dancer:03}) and the nonlocal case (\cite{FLS,FL,DIS-1,DIS,FW-uniq1D,Azahara-Parini}).
\end{remark}

 \section{Appendix I: A Hopf-type boundary point lemma}
\label{sec:hopf-type-boundary}

The aim of this section is to establish a new Hopf-type boundary point lemma for open sets $\Omega$ of class $C^{1,1}$, which was used in the special case $\Omega = B$ with radial data in the proof of Theorem~\ref{main-theorem}.

We believe that this new lemma could be of independent interest. It applies, 
in particular, to an arbitrary solution $w \in H^s(\R^N) \setminus \{0\}$ of the Dirichlet problem
\begin{equation}
  \label{eq:new-appendix-problem-1}
-\Delta w + V w = 0\quad \text{in $\Omega$,}\qquad w \equiv 0 \quad \text{in $\R^N \setminus \Omega$}
\end{equation}
with bounded and locally Hölder continuous potential $V$ on $\Omega$, and it yields positivity of the fractional normal derivative of $w$ at a boundary point $x_0 \in \partial \Omega$ if the $s$-harmonic extension $W$ of $w$ is nonnegative in a relative neighborhood of $(x_0,0) \in \overline{\R^{N+1}_+}$. Hence, in contrast to previous versions of the fractional Hopf lemma which are restricted to {\em globally} nonnegative solutions $w$ of (\ref{eq:new-appendix-problem-1}), we only need {\em local} nonnegativity assumptions.
Therefore may also consider sign-changing solutions with additional information on the nodal structure of the $s$-harmonic extension.   

We need to introduce some notation. For $x_0 \in \R^N$, we let, as before, $B_r(x_0) $ denote the ball in $\R^N$ of radius $r$ centered at $x_0$. Moreover, we define $\B_r(x_0)$ as the ball in $\R^{N+1}$ centered at the point $(x_0,0)$ with radius $r$ and $\B_r^+(x_0):=\R^{N+1}_+\cap\B_r(x_0)$. If there is no confusion, we will identify, as before, subsets $A$ of $\R^N$ with $A \times \{0\} \subset \overline{\R^{N+1}_+}$. Note that,
with this identification, $B_r(x_0)$ coincides with $\de \B_r^+(x_0)\setminus \R^{N+1}_+$ up to a set of set of zero
\mbox{$N$-dimensional} Lebesgue measure.

\begin{theorem}
  \label{new-Hopf-lemma-simple-version}
  Suppose that $\O \subset \R^N$ is a $C^{1,1}$ open set with $0\in \de \O$, let $r>0$, and let $V \in L^\infty( B_r(0) \cap \O) \cap C^\a_{loc} (B_r(0)\cap\O) $, for some $\a>0$. Suppose moreover that $w \in H^s(\R^N) \cap L^\infty(\R^N) \cap C(\R^N)$ satisfies
  $w \not \equiv 0$ and
  \begin{equation}
    \label{eq:supersolution-Hopf-simple-version}
  (-\Delta)^s w + V w \ge 0 \qquad \text{in $B_r(0) \cap \Omega$,}
  \end{equation}
in weak sense, and suppose that the $s$-harmonic extension $W \in D^{1,2}(\R^{N+1}_+,t^{1-2s})$ of $w$ satisfies $W \ge 0$ in $\B_r^+(0)$. Then we have
\begin{equation}
  \label{eq:hopf-derivative-simple-version}
\liminf \limits_{\rho \searrow 0}\frac{w(\rho \nu)}{ \rho^s}>0,
\end{equation}
where $\nu$ is the unit interior normal of $\de\O$ at $0$.
\end{theorem}

With regard to related results in the general context of solutions $w$ of (\ref{eq:supersolution-Hopf-simple-version}), we are only aware of \cite[Theorem 1.2]{DSV} where (\ref{eq:hopf-derivative-simple-version}) is also shown without assuming global positivity of $w$, but only under a priori growth assumptions on $w$ near zero which we are not able to verify in our application to second eigenfunctions in Theorem~\ref{main-theorem}. 

We shall derive Theorem~\ref{new-Hopf-lemma-simple-version} from a more general, purely local result
given in Theorem~\ref{th:Boundary-UCP} below. For this, we need some further notation. For a Lipschitz domain $A \subset \R^{N+1}_+$, we define the weighted Sobolev space $H^1(A;t^{1-2s})$ as the space of all functions $U \in L^2(A)$ with
$$
\|U\|_{H^1(A;t^{1-2s})}^2:= \int_{A}(|\n U|^2+U^2) t^{1-2s}\, dxdt< \infty,
$$
where $\n U$ denotes the distributional gradient of $U$. Moreover, we let $H^1_{0,+}(A;t^{1-2s})$ denote the subspace of functions $U \in H^1_{0,+}(A;t^{1-2s})$ with $U \equiv 0$ on $\partial A \cap \R^{N+1}_+$ in trace sense. 

We recall the following Poincare and Sobolev trace inequalities.

\begin{lemma}
  \label{weighted-poincare}
There exists $C=C(N,s)>0$ with the property that  
$$
\int_{\B_{1}^+(0)} |w|^2 t^{1-2s}dxdt \le C \Bigl(\int_{\B_{1}^+(0)} |\n w|^2 t^{1-2s}dxdt+ \|w\|^2_{L^2( \de  \B_{1}^+(0)\cap \R^{N+1}_+;t^{1-2s})} \Bigr),
  $$
for any $w\in H^1(\B_{1}^+(0); t^{1-2s})$.
\end{lemma}
This is a direct consequence of \cite[Lemma 2.4]{Fall-Felli}.

\begin{lemma}
  \label{Lemma-Sobolev-Trace}
  Let $m = 2^*_s=\frac{2N}{N-2s}$ if $N>2s$, and let $2 < m< \infty$ in the case $1 = N \le 2s$. 
Then there exists $C=C(N,s,m)>0$ with the property that  
$$
\|w\|_{L^{m}(B_{1}(0))}^2 \nonumber \\
  \le C \Bigl(\int_{\B_{1}^+(0)} |\n w|^2 t^{1-2s}dxdt+ \|w\|^2_{L^2( \de  \B_{1}^+(0)\cap \R^{N+1}_+;t^{1-2s})} \Bigr),
  $$
for any $w\in H^1(\B_{1}^+(0); t^{1-2s})$.
\end{lemma}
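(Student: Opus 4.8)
The plan is to deduce this trace inequality from the global Sobolev trace inequality on the half-space $\R^{N+1}_+$ by a reflection and extension argument localized to the half-ball $\B_1^+(0)$. Recall that the global inequality (a consequence of the Caffarelli--Silvestre extension theory, cf.\ \eqref{eq:CS} and Remark~\ref{eq:minextens}) states that for every $U \in D^{1,2}(\R^{N+1}_+;t^{1-2s})$ one has $\|U(\cdot,0)\|_{L^{2^*_s}(\R^N)}^2 \le C \int_{\R^{N+1}_+}|\n U|^2 t^{1-2s}\,dxdt$ when $N>2s$, and the analogous inequality into $L^m(\R^N)$ for any $m<\infty$ when $N=1\le 2s$ (after combining with the $L^2$ bound on a bounded set). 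So the task reduces to producing, from a given $w\in H^1(\B_1^+(0);t^{1-2s})$ with $w(\cdot,0)=0$ on $B_1(0)\setminus B_{1/2}(0)$, a function $\widetilde w \in D^{1,2}(\R^{N+1}_+;t^{1-2s})$ that agrees with $w$ on $B_{1/2}(0)\times\{0\}$ and whose weighted Dirichlet energy is controlled by the right-hand side of the claimed estimate.

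First I would fix a smooth cutoff $\chi\in C^\infty_c(\B_1(0))$ with $\chi\equiv 1$ on $\B_{1/2}(0)$ and $0\le\chi\le1$, and set $v:=\chi w$ on $\B_1^+(0)$. Since $w(\cdot,0)$ vanishes on $B_1(0)\setminus B_{1/2}(0)$, the product $v$ vanishes (in trace sense) on the flat part $\partial\B_1^+(0)\cap\{t=0\}$ outside $B_{1/2}$, and $v$ vanishes near the curved part $\partial\B_1^+(0)\cap\{t>0\}$ because $\chi$ does. A standard product-rule estimate gives
\[
\int_{\B_1^+(0)}|\n v|^2 t^{1-2s}\,dxdt \le C\left(\int_{\B_1^+(0)}|\n w|^2 t^{1-2s}\,dxdt + \int_{\B_1^+(0)} w^2 t^{1-2s}\,dxdt\right),
\]
and the zero-order term on the right is in turn controlled by the weighted trace $\|w\|^2_{L^2(\partial\B_1^+(0)\cap\{t>0\};t^{1-2s})}$ together with the gradient term, via a weighted Poincaré (or rather a one-dimensional fundamental-theorem-of-calculus argument integrated over rays, exploiting that $w$ is, up to the cutoff, pinned down on part of the boundary) — this is the kind of weighted Hardy/Poincaré estimate on the half-ball that is available from the $A_2$-weight theory for $t^{1-2s}$. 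Having bounded the energy of $v$, I would extend $v$ by zero outside $\B_1^+(0)$ to obtain $\widetilde w$ on all of $\R^{N+1}_+$; because $v$ vanishes near $\partial\B_1^+(0)\cap\{t>0\}$ and has vanishing trace on $\{t=0\}$ outside $B_{1/2}$, this zero-extension lies in $D^{1,2}(\R^{N+1}_+;t^{1-2s})$ with no new energy contribution. Applying the global trace inequality to $\widetilde w$ and noting $\widetilde w(\cdot,0)=w(\cdot,0)$ on $B_{1/2}(0)$ (hence on $B_1(0)$, since $w(\cdot,0)$ is supported there) yields the claim. In the regime $N=1\le 2s$ one first uses the bounded-domain trace embedding into $L^2$ and interpolates/combines with a higher-integrability trace bound to reach arbitrary finite $m$; equivalently, one invokes the version of the global inequality valid for $N=1$ into $L^m_{loc}$.

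The main obstacle I anticipate is the weighted Poincaré-type step used to absorb $\int_{\B_1^+(0)}w^2 t^{1-2s}$ into the gradient term plus the curved-boundary trace term: one must be careful that the relevant Poincaré inequality holds on the half-ball with the $A_2$ weight $t^{1-2s}$ and with the boundary datum prescribed only on the ``cap'' $\partial\B_1^+(0)\cap\{t>0\}$ rather than on all of $\partial\B_1^+(0)$. This is true — it follows from the general theory of Sobolev spaces with Muckenhoupt weights (e.g.\ Fabes--Kenig--Serapioni) — but it is the place where the argument is not merely formal. An alternative that sidesteps it is to note that $C^\infty_c(\B_1(0))$ is dense in the natural space, reduce to smooth $w$, and run the ray-integration argument by hand; the cost is some routine but slightly tedious estimates that I would not grind through here.
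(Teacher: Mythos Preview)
Your argument is essentially correct in outline, but it is a considerably more elaborate route than what the paper does. The paper's proof consists of two lines: for $N>2s$ it simply cites \cite[Lemma 2.6]{Fall-Felli}; for the remaining case $1=N\le 2s$ it observes that on the unit half-ball one has $t\le 1$ and hence $t^{1-2s}\ge 1$, so that
\[
\int_{\B_1^+(0)}|\nabla w|^2\,dxdt \;\le\; \int_{\B_1^+(0)}|\nabla w|^2\,t^{1-2s}\,dxdt,
\]
and likewise for the curved-boundary term. This immediately reduces the case $s>\tfrac12$ to the classical unweighted case $s=\tfrac12$, where the inequality is standard. No cutoff, no extension, and no weighted Poincar\'e step are needed.

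Your approach, by contrast, proves the statement from scratch via cutoff, a weighted Poincar\'e inequality on the half-ball (to absorb $\int_{\B_1^+} w^2 t^{1-2s}$ into the gradient plus the cap trace), extension by zero, and the global trace inequality. This works in principle---the $A_2$ theory of Fabes--Kenig--Serapioni does supply the Poincar\'e step you flag as the obstacle, and the ray-integration argument can be made rigorous---but it is substantially more work, and your treatment of the case $N=1\le 2s$ remains vague (``interpolate/combine'', ``$L^m_{loc}$''). The paper's monotonicity-in-$s$ trick handles that case in one stroke. What your approach buys is self-containment (no appeal to \cite{Fall-Felli}); what the paper's approach buys is brevity and the avoidance of any genuinely delicate weighted estimate.
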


This inequality is classical in the unweighted case $1=2s$, and it  in \cite[Lemma 2.6]{Fall-Felli} in the case $N>2s$. To deal with the remaining case $1= N<2s$, we merely note that
$$
\int_{\B_{1}^+(0)} |\n w|^2 dxdt \le \int_{\B_{1}^+(0)} |\n w|^2 t^{1-2s}dxdt  
\qquad \text{for any $w\in H^1(\B_{1}^+(0); t^{1-2s})$ if $s \ge \frac{1}{2}$.}
$$ 
Therefore, if $m \in (2,\infty)$ is fixed, the inequality for $s = \frac{1}{2}$ implies the one for $s> \frac{1}{2}$.

We are now in a position to formulate our more general and purely local variant of Theorem~\ref{new-Hopf-lemma-simple-version}.

 \begin{theorem} \label{th:Boundary-UCP}
Let $\O \subset \R^N$ be a $C^{1,1}$ open set with $0\in \de \O$, let $r>0$, and let $V \in L^\infty( B_r(0) \cap \O) \cap C^\a_{loc} (B_r(0)\cap\O) $, for some $\a>0$. Moreover, let $U \in H^1(\B^+_r(0);t^{1-2s})\cap C (\overline{\B^+_r(0)}) $ be nonnegative and satisfy 
   \begin{align}\label{eq:eqU-ext}
\begin{cases}
-\div(t^{1-2s}  \n U)\geq   0& \quad\textrm{ in  $\B^+_r(0)$,}\\
-\lim \limits_{t\to 0} t^{1-2s} \de_t U+VU\geq 0& \quad\textrm{ in    $B_r(0)\cap\O  $.}
\end{cases}
   \end{align}
   in weak sense, i.e.,
   \begin{equation}
     \label{eq:eqU-ext-weak-sense}
     \int_{\B^+_r(0)}t^{1-2s}\nabla U \cdot \nabla \Phi\,dx dt + \int_{B_r(0)} V U \Phi d x \ge 0 \quad \text{for all 
       $\Phi \in H^1_{0,+}(\B^+_r(0);t^{1-2s}), \Phi \ge 0$.}
\end{equation}
Then either $U\equiv 0$ in $\B_r^+(0)$  or $\liminf \limits_{\rho \searrow 0}\frac{U(\rho \nu ,0)}{ \rho^s}>0$, where $\nu$ is the unit interior normal of $\de\O$ at $0$.
\end{theorem}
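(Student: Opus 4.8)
The statement is a fractional Hopf-type boundary point lemma for supersolutions of the degenerate mixed boundary value problem on the half-ball $\B_r^+(0)$. The plan is to adapt the classical construction of a barrier (subsolution) that vanishes to the right order $\rho^s$ at the boundary point $0 \in \partial\Omega$, and to combine it with a maximum/comparison principle for the weighted operator $\div(t^{1-2s}\nabla \cdot)$ together with the Robin-type condition at $t=0$. First I would reduce to the case where $U$ is not identically zero. Then, by the strong maximum principle for the degenerate operator $-\div(t^{1-2s}\nabla U)\ge 0$ (Harnack inequality for $A_2$-weights, as in Fabes–Kenig–Serapioni), I get $U>0$ in the interior $\B_\rho^+(0)$ for some smaller $\rho>0$. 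The key is then a quantitative lower bound near the boundary portion $B_\rho(0)\cap\Omega$ lying on $\{t=0\}$.

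The central step is the construction of an explicit comparison function. Since $\Omega$ is $C^{1,1}$, I can flatten the boundary near $0$ or, more simply, work with an interior tangent ball: choose a ball $B_\delta(x_*)\subset\Omega$ with $0\in\partial B_\delta(x_*)$ and interior normal $\nu$. On the extended cylinder I would use a barrier of the form $\Phi(x,t)= \eta(x,t)\,\varphi_{s}(x)$ or, more directly, build on the known $s$-harmonic extension of $(\delta^2-|x-x_*|^2)_+^s$-type functions — i.e., the extension of the torsion-type function for the fractional Laplacian on a ball, whose fractional normal derivative is strictly positive (this is exactly the content of the fractional Hopf lemma of Fall–Jarohs, cf. \cite{FJ-2015}, at the extension level). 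Call this barrier $\Psi\in H^1(\B_\delta^+(x_*);t^{1-2s})$; it satisfies $-\div(t^{1-2s}\nabla\Psi)= 0$ (or $\le 0$ after a small perturbation to absorb the zeroth-order term $V$), vanishes on $\partial B_\delta(x_*)\times\{0\}$, is positive inside, and satisfies $\liminf_{\rho\to 0^+}\Psi(\rho\nu,0)/\rho^s>0$. To handle the potential $V\in L^q$, I would shrink the radius so that $\|V\|_{L^q}$ on the small ball is tiny, use the trace inequality of Lemma~\ref{Lemma-Sobolev-Trace} to control $\int V \Psi^2$ by a small multiple of the Dirichlet energy, and thereby make $\Psi$ (or a slight modification $\Psi - C\Psi^2$-type correction, or an eigenfunction-type rescaling) a genuine subsolution of the mixed problem on the small half-ball.

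With the barrier in hand, the comparison argument runs as follows: on an appropriate half-ball $\B_{\rho'}^+$ contained in $\B_r^+(0)\cap(\Omega\times[0,\infty))$, the function $U$ is bounded below by a positive constant $c_0$ on the "inner" part of the boundary $\partial\B_{\rho'}^+\cap\{t>0\}$ and on $\partial B_{\rho'}\times\{0\}$ (by interior positivity and continuity), while $U\ge 0$ everywhere and satisfies the supersolution inequalities \eqref{eq:eqU-ext}. Rescaling $\Psi$ so that $\varepsilon\Psi\le c_0$ on that inner boundary, the difference $U-\varepsilon\Psi$ is a supersolution of the mixed problem, nonnegative on the relevant boundary pieces; the weak maximum principle (tested against $(U-\varepsilon\Psi)^-\in H^1_{0,+}$, using \eqref{eq:eqU-ext-weak-sense} and the subsolution property of $\varepsilon\Psi$, plus the smallness of $\|V\|_{L^q}$ to keep the bilinear form coercive) forces $U\ge\varepsilon\Psi$ in the half-ball. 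Taking $\liminf_{\rho\to 0^+}$ of $U(\rho\nu,0)/\rho^s\ge\varepsilon\Psi(\rho\nu,0)/\rho^s>0$ gives the conclusion.

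\textbf{Main obstacle.} The hard part is the construction and verification of the barrier $\Psi$ in the degenerate weighted setting: one needs a function that is simultaneously (a) a subsolution of $-\div(t^{1-2s}\nabla\Psi)\le 0$ in the half-ball, (b) a subsolution of the Robin-type condition $-\lim_{t\to 0}t^{1-2s}\partial_t\Psi + V\Psi\le 0$ on the flat part despite $V$ being merely in $L^q$, and (c) has the \emph{exact} boundary decay rate $\rho^s$ with a strictly positive coefficient along $\nu$. Points (a) and (c) are essentially encoded in the extension of the fractional torsion function on a ball (where explicit hypergeometric-type formulas, as already used near \eqref{BBDG-cons-2-0-0}, are available), but matching the $C^{1,1}$ geometry of $\partial\Omega$ via an interior tangent ball, and absorbing the low-regularity zeroth-order term $V$ into the subsolution inequality through the trace inequality and a radius-shrinking argument, is where the technical weight of the proof lies. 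I would expect the authors' proof to follow the scheme of \cite[Lemma 5.10]{FW-uniq1D} referenced in the introduction, adapting its barrier to the present non-radial mixed boundary setting.
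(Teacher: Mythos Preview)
Your overall strategy---interior tangent ball, a torsion-type barrier coming from the $s$-harmonic extension of a solution $\phi$ of $(-\Delta)^s\phi=f$ on $B_\delta(x_*)$ together with the fractional Hopf lemma of \cite{FJ-2015}, smallness of $\|V\|_{L^q}$ on small balls combined with the trace inequality of Lemma~\ref{Lemma-Sobolev-Trace}, and a weak maximum principle tested against the negative part---is exactly the skeleton of the paper's proof. In particular, your identification of the ``main obstacle'' (absorbing the low-regularity potential $V$ into the subsolution inequality while keeping the precise $\rho^s$ decay) is the right one.

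Where your plan diverges is that you propose a \emph{direct} comparison $U\ge\varepsilon\Psi$ on a half-ball, whereas the paper inserts an intermediate step: it first constructs, by energy minimization, an auxiliary function $W$ solving the mixed problem \emph{exactly} (with the potential $V$) on $\B_{r/2}^+(0)$, with $W=U$ on the curved boundary, $W(\cdot,0)=0$ on $B_{r/2}(0)\setminus B_\tau(e_\tau)$, and proves $U\ge W\ge 0$. Only then is $W$ compared to the barrier $\eta\tilde\Phi$ (the extension of $\phi$), and this second comparison is done on an \emph{annulus} $\B_{\tau_2}^+(e_\tau)\setminus\overline{\B_{\tau_1}^+(e_\tau)}$ where $f\equiv 0$, so that $-\lim_{t\to 0}t^{1-2s}\partial_t\tilde\Phi=0$ there.

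This two-step indirection is not cosmetic; it resolves precisely the difficulty you flag. First, $W$ satisfies an \emph{equation}, so the regularity theory of \cite{CS} applies and gives continuity of $W$ and of $t^{1-2s}\partial_tW$; in particular \cite[Proposition~4.11]{CS} yields the crucial quantitative lower bound $W(x,t)\ge ct^{2s}$ on the outer curved boundary (using that $W(\cdot,0)=0$ outside $B_\tau(e_\tau)$), which you would not obtain for the mere supersolution $U$ under the stated continuity hypothesis $U\in C(\Omega\times[0,r))$. Second, working on the annulus kills the right-hand side $f$ of the barrier, so the Robin-type condition for $\Psi=W-\eta\tilde\Phi$ reduces to $-\lim t^{1-2s}\partial_t\Psi\ge -VW$, and the $VW$ term is then controlled \emph{in the weak formulation} by H\"older plus the trace inequality, exactly as you suggest---but without ever needing the pointwise subsolution inequality $a_sf+V\phi\le 0$ that your direct route would require and that fails in general for $f\ge 0$. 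Your scheme is therefore correct in outline, but the auxiliary $W$ and the annular comparison are the missing ideas that make it go through.
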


\begin{proof}
  Let us assume that $U\not\equiv 0$ in $\B_r^+(0).$ Then by the strong maximum principle     $U>0$ in  $\B_r^+(0)$.  By assumption, $\O$ satisfies the interior sphere condition at 0. Therefore, there exists $\t_0\in (0,r/2)$ such that for all $\t\in (0,\t_0)$,  there exists $e_\t\in \O$ such that  $B_\t(e_\t)\subset \O\cap B_r(0)$     and  $0\in \de B_\t(e_\t)$.
We claim that the problem 
      \begin{align}\label{eq:eqU-ext-ww}
\begin{cases}
-\div(t^{1-2s}  \n W)=   0& \quad\textrm{ in  $\B_{r/2}^+(0)$,}\\
-\lim_{t\to 0} t^{1-2s}\de_t W+VW= 0& \quad\textrm{ in    $B_\t(e_\t) $,}\\
W(\cdot,0) =0 &  \quad\textrm{ in    $B_{r/2}(0)\setminus B_\t(e_\t) $,}\\
W= U& \quad\textrm{ on   $ \de  \B_{r/2}^+(0)\cap \R^{N+1}_+$.}
\end{cases}
\end{align}
admits a (weak) solution if $\t>0$ is chosen sufficiently small. By this we mean that $W$ is contained in the affine subspace $\cH \subset H^1(\B_{r/2}^+(0);t^{1-2s})$ of functions $W\in H^1(\B_{r/2}^+(0);t^{1-2s})$ which satisfy the last two equations in \eqref{eq:eqU-ext-ww} in trace sense, and that 
\begin{equation}
  \label{eq:weak-sense-W-equation}
\left\{  \begin{aligned}
    & \int_{\B^+_{r/2}(0)}t^{1-2s}\nabla W \cdot \nabla \Phi\,dx dt + \int_{B_\tau(e_\t)} V W \Phi d x = 0\\
&\text{for all $\Phi \in H^1_{0,+}(\B^+_{r/2}(0);t^{1-2s})$ with $\Phi \equiv 0$ on $B_{r/2}(0)\setminus B_\t(e_\t) $.}
  \end{aligned}
\right.
\end{equation}
To see this, we minimize the  
energy functional
$$
W \mapsto J(W):=\int_{\B_{r/2}^+(0)} |\n W|^2 t^{1-2s}dxdt+\int_{B_\t(e_\t)}VW^2dx
$$
in $\cH$. By H\"older's inequality and Lemma~\ref{Lemma-Sobolev-Trace}, we have, for some $m \in (2,\infty)$,
\begin{align}
 & \Bigl|\int_{B_\t(e_\t)}VW^2dx\Bigr| \leq \|V\|_{L^\infty(B_\t(e_\t))}\|W\|_{L^{2}(B_\t(e_\t))}^2                                \label{eq:frac-Sob-trace} \\ 
  &\leq |\|V\|_{L^\infty(B_{r/2}(0))}|B_\t(e_\t)|^{\frac{m-2}{m}} \|W\|_{L^{m}(B_{r/2}(0))}^2\nonumber\\         
  &\le C|B_\t(e_\t)|^{\frac{m-2}{m}}  \|V\|_{L^\infty(B_{r/2}(0))} \Bigl(\int_{\B_{r/2}^+(0)} |\n W|^2 t^{1-2s}dxdt+ \|W\|^2_{L^2( \de  \B_{r/2}^+(0)\cap \R^{N+1}_+;t^{1-2s})} \Bigr)\nonumber\\
  &\le C|B_\t(e_\t)|^{\frac{m-2}{m}}  \|V\|_{L^\infty(B_{r/2}(0))} \Bigl(\int_{\B_{r/2}^+(0)} |\n W|^2 t^{1-2s}dxdt+ \|U\|^2_{L^2( \de  \B_{r/2}^+(0)\cap \R^{N+1}_+;t^{1-2s})} \Bigr)\nonumber
\end{align}
for $W\in \cH$ with a constant $C=C(r,N,s,m)>0$. If $\t>0$ is chosen small enough to guarantee that
\begin{equation}
  \label{eq:tau-volume-small}
C|B_\t(e_\t)|^{\frac{m-2}{m}}  \|V\|_{L^\infty(B_{r/2}(0))}<1,  
\end{equation}
then it follows from (\ref{eq:frac-Sob-trace}) and Lemma~\ref{weighted-poincare} that the functional $J$ is coercive on $\cH$. Since, furthermore, the trace map $H^1(\B_{r/2}^+(0);t^{1-2s}) \hookrightarrow L^2(B_\t(e_\t);Vdx)$ is compact, standard weak lower continuity arguments show that $J$ admits a minimizer in $\cH$, which then satisfies (\ref{eq:weak-sense-W-equation}).

In the following, we may therefore suppose that $\t>0$ is chosen sufficiently small so that (\ref{eq:eqU-ext-ww}) admits a weak solution. Moreover, making $\tau>0$ smaller if necessary, we may use the small volume maximum principle (see e.g. \cite[Prop. 2.4. and Rem. 2.6]{JW14}) and the fractional Hopf Lemma (see \cite[Prop. 3.3. and Rem. 3.5]{FJ-2015}) for the operator $(-\Delta)^s + V$ to see that for every nonnegative $f \in C^\infty_c(B_\t(e_\t))$ there exists a unique solution
$\phi \in \cH^s (B_\t(e_\t))$ of the equation
\begin{equation}
  \label{eq:fractional-schroedinger-proof-hopf-lemma}
\Ds \phi + V \phi = f\qquad \text{in $B_\t(e_\t)$}  
\end{equation}
satisfying 
 \be\label{eq:Hopf-phi}
\lim_{x\to 0}\frac{\phi(x)}{( \t-|x-e_\t|)_+^s}>0, 
 \ee
Next, we let, as before, $W$ be a weak solution of (\ref{eq:eqU-ext-ww}), and we note that $W\in C(\ov{ \B_{r/2}^+(0)} )$ and   $t^{1-2s}\de_t W \in C (  \B_\t^+(e_\t)  )$ by the regularity theory in \cite{CS}.
In addition, we deduce from (\ref{eq:eqU-ext-weak-sense}) and (\ref{eq:weak-sense-W-equation}) that 
\be \label{eq:lower-est-U-by-w}
U\geq W\ge 0\qquad\textrm{ in $\ov{\B^+_{r/2}(0)}$}.
\ee
Indeed, applying (\ref{eq:weak-sense-W-equation}) with $\Phi = W_- = \max(-W,0) \in H^1_{0,+}(\B^+_{r/2}(0);t^{1-2s})$ gives
$$
\int_{\B^+_{r/2}(0)}t^{1-2s}|\nabla W_-|^2 \,dx dt =
-\int_{B_\tau(e_\t)} V |W_-|^2 d x.
$$
Estimating as in (\ref{eq:frac-Sob-trace}) and using that $W_- \equiv U_- \equiv 0$ on $\de  \B_{r/2}^+(0)\cap \R^{N+1}_+$, we obtain $\int_{\B^+_{r/2}(0)}t^{1-2s}|\nabla W_-|^2 \,dx dt = 0$ and therefore $W_- \equiv 0$ in $\B^+_{r/2}(0)$, which gives the second inequality in \eqref{eq:lower-est-U-by-w}.
The first inequality in \eqref{eq:lower-est-U-by-w} follows in a similar way from (\ref{eq:eqU-ext-weak-sense}) and (\ref{eq:weak-sense-W-equation}).

Moreover, by \eqref{eq:lower-est-U-by-w} and the strong maximum principle,  
$$
W>0 \qquad \text{in  $\B_{r/2}^+(0)$.}
$$

For fixed $\t$ as above, we let  $\t_2\in (\t, r/2)$.  Since $W\equiv 0$ on $\overline{B_{\t_2}(e_\t)}\setminus B_{\t}(e_\t)$ and $t^{1-2s}\de_t W \in C ( \ov{ \B_{\t_2}^+(e_\t)}\setminus \ov{\B_{\t}^+(e_\t)}  )$,  by  applying  \cite[Proposition 4.11]{CS},    we can find  a constant  $c>0$ such  that 
\be\label{eq:low-est-U} 
W(x,t)\geq   c t^{2s} \qquad\textrm{ for  $ (x,t)\in \de  \B_{\t_2}^+(e_\t)\setminus B_{\t_2}(e_\t)$. }
\ee

We note that for  $e \in B_\t(e_\t) $, we have $W(e,0)>0$ because otherwise it would follow from $W(e,0)=0$ and \cite[Proposition 4.11]{CS} that
$$
0> -\lim_{t\to 0} t^{1-2s}\de_t W(e,t)= -V(e )W(e,0 )=0,
 $$
 which is not possible. 
Therefore, fixing $\t_1\in (0,\t)$ from now on, we deduce, by compactness and the continuity of $W$,  that 
\be\label{eq:low-est-ti-U1}
 W(z) \geq  c \qquad\textrm{ for all $z\in \ov{  \B_{\t_1}^+(e_\t)}$}
\ee 
after making $c>0$ smaller if necessary. Next, we choose a nonnegative and nontrivial function $f\in C^\infty_c(B_{\t_1}(e_\t) )$, and we let $\phi \in \cH^s (B_\t(e_\t))$ be the unique solution of (\ref{eq:fractional-schroedinger-proof-hopf-lemma}), which then satisfies \ref{eq:Hopf-phi}.

Let $\ti \Phi\in D^{1,2}_{B_\t(e_\t)}(\RNp;t^{1-2s})\cap C(\ov{\R^{N+1}_+})$ denote the $s$-harmonic extension of $\phi$. It then follows from the Poisson kernel representation and the fact that $\phi=0$ in $\R^N\setminus B_\t(e_\t) $ that 
$$
  \ti \Phi (x, t) \leq c' t^{2s} \qquad\textrm{ for all $(x,t)\in \de  \B_{\t_2}^+(e_\t)\setminus B_{\t_2}(e_\t)$.}
$$
and  that 
$$
  \ti \Phi(z) \leq  c' \qquad\textrm{ for all $z\in \ov{ \B_{\t_1}^+(e_\t)}  $,}
$$
for some constant $c'>0$.   
We then fix $\eta>0$ with $c>\eta c'$. By \eqref{eq:eqU-ext-ww},  \eqref{eq:low-est-U}    and \eqref{eq:low-est-ti-U1},  the   function $\Psi :=W-\eta \ti\Phi \in H^1(\B_{r/2}^+(0);t^{1-2s})$ satisfies 
 \begin{align*}
\begin{cases}
-\div(t^{1-2s}  \n  \Psi)\geq  0& \quad\textrm{ in $A $   }\\
-\lim_{t\to 0} t^{1-2s} \Psi(x,t) \geq - V \Psi & \quad\textrm{ for $ x\in B_{\t}(e_\t)\setminus \ov{B_{\t_1}(e_\t)}$,}\\
\Psi(x,0)= 0& \quad\textrm{  for $ x\in B_{\t_2}(e_\t)\setminus \ov{B_{\t}(e_\t)}$,}\\
\Psi\geq 0 & \quad\textrm{ on  $\de A \cap \R^{N+1}_+$,}\\
\end{cases}
\end{align*}
where $A:=\B_{\t_2}^+(e_\t)\setminus \ov{\B_{\t_1}^+(e_\t)} $.
Here we used that $f=0$ on  $B_{\t}(e_\t)\setminus {B_{\t_1}(e_\t)}$. It therefore follows that
$$
\Psi_*: =\Psi^-  1_{A} \in
H^1_{0,+}(\B_{r/2}^+(0);t^{1-2s}),
$$
where $\Psi^- = \max\{-\Psi,0\}$ is the negative part of $\Psi.$ Multiplying the above equation with $\Psi_*$ in $A$ and integrating by parts, we get
\begin{align*}
  -\int_{\B_{r/2}^+(0)}| \n \Psi_*|^2 t^{1-2s} dxdt&=  -\int_{A}| \n \Psi_*|^2 t^{1-2s} dxdt \geq -\int_{ B_{\t}(e_\t)\setminus {B_{\t_1}(e_\t)}} V |\Psi_*|^2 dx\\
                                                   &\geq- \|V \|_{L^\infty(B_{\t}(e_\t))}  \| \Psi_- \|_{L^2(B_{\t}(e_\t))}^2\\
  &\ge - \|V \|_{L^\infty(B_{r/2}(0))} |B_\t(e_\t)|^{\frac{m-2}{m}}  \| \Psi_- \|_{L^m((B_{r/2}(0))}^2
\end{align*}
for any $m \in (2,\infty)$, where we used H\"older's inequality in the last step. Choosing $m$ appropriately and applying Lemma~\ref{Lemma-Sobolev-Trace} as in \eqref{eq:frac-Sob-trace}, we obtain
\begin{align*}
  -\int_{\B_{r/2}^+(0)}| \n \Psi_*|^2 t^{1-2s} dxdt \ge   -  C \|V \|_{L^\infty(B_{r/2}(0))} |B_\t(e_\t)|^{\frac{m-2}{m}} \int_{\B_{r/2}^+(0)}| \n \Psi_*|^2 t^{1-2s} dxdt
\end{align*}
with $C=C(r,N,s,m)>0$ as in \eqref{eq:frac-Sob-trace}. From this and (\ref{eq:tau-volume-small}), we  get  $ | \n \Psi_*| =0$ on $\B_{r/2}^+(0)$ and therefore $\Psi^-\equiv 0$ in $\ov A$, which in particular implies that  $W(x,0)\geq \eta \Phi(x,0)=\eta \phi(x) $ for all $x\in B_{\t}(e_\t) $. By \eqref{eq:Hopf-phi} and \eqref{eq:lower-est-U-by-w}, we therefore get $\liminf_{\rho \searrow 0}\frac{U(\rho \nu ,0)}{ \rho^s}>0$, as claimed.
\end{proof}

\begin{proof}[Proof of Theorem~\ref{new-Hopf-lemma-simple-version}]
Let $w \in H^s(\R^N)$ satisfy the assumptions of Theorem~\ref{new-Hopf-lemma-simple-version}, and let $W$ be the $s-$harmonic extension of $w$. Then $U:= W\Big|_{\overline{\B^+_r(0)}} \in H^1(\B^+_r(0);t^{1-2s})\cap C (\overline{\B^+_r(0)})$, and $W$ satisfies (\ref{eq:eqU-ext}). Moreover, $U= W \not \equiv 0$ in $\B^+_r(0)$, since otherwise $W \equiv 0$ in $\overline{\R^{N+1}_+}$ by unique continuation and therefore $w \equiv 0$.  Hence Theorem~\ref{th:Boundary-UCP} yields that    
$$
\liminf \limits_{\rho \searrow 0}\frac{w(\rho \nu)}{ \rho^s}=\liminf \limits_{\rho \searrow 0}\frac{U(\rho \nu ,0)}{ \rho^s}>0,
$$
as claimed.
\end{proof}

\section{Appendix II: Some topological lemmas on curve intersection}
\label{sec:appendix}

In this appendix, we collect curve intersection properties which we have used in the previous sections. We start by citing the following lemma from \cite[Lemma 7.4]{FW-uniq1D}.

\begin{lemma}
  \label{sec:topological-lemma-1}
  Let $x_1< x_2 <x_3 < x_4$ be real numbers. Suppose that $\gamma, \eta: [0,1] \to \overline{\R^2_+}$ are continuous curves such that  $\gamma(0)=(x_1,0)$, $\gamma(1)=(x_3,0)$, $\eta(0)=(x_2,0)$, $\eta(1)=(x_4,0)$. Then the curves $\gamma$ and $\eta$ intersect, i.e. there exists $t,\tilde t \in (0,1)$ with $\gamma(t)= \eta(\tilde t)$.
\end{lemma}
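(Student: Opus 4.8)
The plan is to argue by contradiction with a winding--number (degree) argument; the advantage over a Jordan--curve approach is that it does not require $\gamma$ or $\eta$ to be simple arcs. So suppose $\gamma([0,1])\cap\eta([0,1])=\varnothing$. Then
\[
\Phi\colon [0,1]^2\to S^1,\qquad \Phi(s,t):=\frac{\eta(t)-\gamma(s)}{|\eta(t)-\gamma(s)|},
\]
is well defined and continuous. Since $[0,1]^2$ is contractible, $\Phi$ is null--homotopic, so the restriction of $\Phi$ to the boundary curve $\partial([0,1]^2)\cong S^1$ has winding number $0$. I will reach a contradiction by showing that this winding number equals $\pm 1$.

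To compute it I would traverse $\partial([0,1]^2)$ once counterclockwise and follow a continuous choice of the argument $\theta(s,t)$ of $v(s,t):=\eta(t)-\gamma(s)$. The structural point is that on each of the four edges exactly one of $\gamma,\eta$ is frozen at an endpoint lying on the $x$--axis while the other moves inside $\ov{\R^2_+}$; hence along each edge $v$ is confined to a closed half--plane — the lower half--plane $\{v_2\le 0\}$ on the bottom and top edges (where $\eta$ is frozen, so $v_2=-\gamma_2\le 0$) and the upper half--plane $\{v_2\ge 0\}$ on the left and right edges (where $\gamma$ is frozen, so $v_2=\eta_2\ge 0$). At the four corners $v$ is horizontal, namely $v=(x_2-x_1,0),\,(x_2-x_3,0),\,(x_4-x_3,0),\,(x_4-x_1,0)$ at $(0,0),(1,0),(1,1),(0,1)$ respectively, and the ordering $x_1<x_2<x_3<x_4$ fixes the signs of these as $+,-,+,+$. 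A vector trapped in a closed half--plane can change its argument by a net amount of at most $\pi$, with the sign of the change forced by the half--plane together with the two corner directions; carried out edge by edge this gives contributions $-\pi$ on the bottom edge, $-\pi$ on the right edge, and $0$ on each of the top and left edges, hence total change $-2\pi$. Therefore the winding number is $-1\ne 0$, the desired contradiction (the opposite orientation would give $+1$; only the nonvanishing is used).

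The one place where the argument is not completely mechanical — and the step I would write out in detail — is the bookkeeping of the continuous argument $\theta$ across the corners, where $v$ crosses the $x$--axis and the naive branch of $\arg$ jumps by $2\pi$. I would handle this with the elementary band lemma: if $v$ stays in a fixed closed half--plane on an edge, then $\sin\theta$ has a fixed sign there, so $\theta$ stays in one of the disjoint closed intervals of length $\pi$ on which $\sin$ has that sign; being continuous and starting in one such interval, it cannot leave it. Feeding the corner values into this lemma pins down $\theta$ exactly at each corner ($0$, then $-\pi$, then $-2\pi$, then $-2\pi$, then $-2\pi$) and yields the stated contributions. For the record I would also note that the contradiction hypothesis is precisely what makes $\Phi$ defined and continuous and keeps $v$ nonzero on the edges (where otherwise $\gamma(s)=\eta(0)$ or $\eta(t)=\gamma(0)$, etc., could occur), while at the corners $v\ne 0$ follows separately from $x_1<x_2<x_3<x_4$. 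An alternative route — apply the Jordan curve theorem to $\gamma$ together with the segment $[x_1,x_3]\times\{0\}$ and argue that $\eta$ must cross $\gamma$ rather than the segment — also works, but needs extra care when $\gamma$ is not simple, which the degree argument avoids.
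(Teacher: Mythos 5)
Your degree-theoretic argument is correct and carefully executed. The ``band lemma'' confinement of $\theta$ to the disjoint closed intervals on which $\sin\theta$ has a fixed sign is exactly what is needed to track the continuous argument across the corners, and the edge-by-edge contributions $-\pi,-\pi,0,0$ (corner values of $\theta$ equal to $0,-\pi,-2\pi,-2\pi,-2\pi$) are right, giving winding number $\mp 1\neq 0$ and the contradiction. Since the present paper does not prove this lemma but cites it from \cite[Lemma 7.4]{FW-uniq1D}, there is no in-paper proof to compare against; your argument is a clean, self-contained route, and, as you note, it has the advantage over a Jordan--curve approach of not caring whether $\gamma,\eta$ are simple arcs.

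One caveat on the conclusion: your contradiction hypothesis is $\gamma([0,1])\cap\eta([0,1])=\varnothing$, so what you actually establish is the existence of $t,\tilde t\in[0,1]$ with $\gamma(t)=\eta(\tilde t)$, whereas the lemma as printed asserts $t,\tilde t\in(0,1)$. The open-interval refinement does not follow from your argument, and is in fact false as literally stated: take $(x_1,x_2,x_3,x_4)=(0,1,2,3)$, $\gamma(s)=(2s,0)$ and $\eta(t)=(1+2t,\sin\pi t)$; both curves lie in $\overline{\R^2_+}$ with the prescribed endpoints, $\gamma((0,1))$ lies on the axis while $\eta((0,1))$ lies in the open upper half-plane, and the only intersection point is $\gamma(1/2)=\eta(0)$, i.e.\ $\tilde t=0\notin(0,1)$. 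This is harmless for the paper --- in every application the curves take values in disjoint open sets $\cO_+$ and $\cO_-$, so only the closed-interval version you prove is needed --- but you should be aware that you are proving the (correct, usable) version with $[0,1]$ rather than the stronger $(0,1)$ claim in the printed statement.
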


We have also used the following slight generalization.

\begin{lemma}
  \label{sec:topological-lemma-variant}
  Let $x_1< x_2 <x_3<x_4$ be real numbers, and let $\gamma, \eta: [0,1] \to \overline{\R^2_+}$ be continuous curves. Moreover, suppose that one of the following is satisfied.
  \begin{enumerate}
  \item[(i)] We have $\gamma(0)=(x_1,0)$, $\gamma(1)=(x_3,0)$,
   $|\eta(0)-(x_2,0)|< \dist((x_2,0),\gamma([0,1]))$ and $|\eta(1)-(x_4,0)| < 
   \dist((x_4,0),\gamma([0,1]))$
    \item[(ii)] We have $\eta(0)=(x_2,0)$, $\eta(1)=(x_4,0)$,
   $|\gamma(0)-(x_1,0)|< \dist((x_1,0),\eta([0,1]))$ and $|\gamma(1)-(x_3,0)| < 
   \dist((x_3,0),\eta([0,1]))$
 \end{enumerate}
Then the curves $\gamma$ and $\eta$ intersect.
\end{lemma}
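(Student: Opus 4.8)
The plan is to deduce Lemma~\ref{sec:topological-lemma-variant} from Lemma~\ref{sec:topological-lemma-1} by attaching two short straight segments to the endpoints of one of the curves in such a way that no new intersection with the other curve is created. I will spell out case~(i); case~(ii) follows by exchanging the roles of $\gamma$ and $\eta$ throughout.

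Assume (i) and set $\delta_2:=\dist((x_2,0),\gamma([0,1]))$ and $\delta_4:=\dist((x_4,0),\gamma([0,1]))$. The strict inequalities in (i) force $\delta_2,\delta_4>0$, for otherwise they would read $|\eta(0)-(x_2,0)|<0$ or $|\eta(1)-(x_4,0)|<0$; in particular no degenerate case needs to be treated separately. First I would build a continuous curve $\ti\eta\colon[0,1]\to\overline{\R^2_+}$ by concatenation: on $[0,1/3]$ let $\ti\eta$ traverse the line segment from $(x_2,0)$ to $\eta(0)$, on $[1/3,2/3]$ let it traverse $s\mapsto\eta(3s-1)$, and on $[2/3,1]$ let it traverse the line segment from $\eta(1)$ to $(x_4,0)$. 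Since $\overline{\R^2_+}$ is convex and contains the points $(x_2,0)$, $\eta(0)$, $\eta(1)$, $(x_4,0)$, the curve $\ti\eta$ stays in $\overline{\R^2_+}$; moreover it is continuous with $\ti\eta(0)=(x_2,0)$ and $\ti\eta(1)=(x_4,0)$.

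Next I would note that every point of the segment from $(x_2,0)$ to $\eta(0)$ has distance at most $|\eta(0)-(x_2,0)|<\delta_2$ from $(x_2,0)$, hence cannot lie on $\gamma([0,1])$; likewise the segment from $\eta(1)$ to $(x_4,0)$ is disjoint from $\gamma([0,1])$. Applying Lemma~\ref{sec:topological-lemma-1} to $\gamma$ and $\ti\eta$ with the ordering $x_1<x_2<x_3<x_4$ then produces parameters $t,\ti t\in(0,1)$ with $\gamma(t)=\ti\eta(\ti t)$. Since $\gamma(t)$ cannot lie on either of the two appended segments, $\ti\eta(\ti t)$ must lie on the middle portion of $\ti\eta$, i.e.\ $\gamma(t)\in\eta([0,1])$, which is the desired intersection. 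The argument is essentially bookkeeping, so I do not expect a genuine obstacle; the only step requiring care is the verification that the two appended segments avoid $\gamma([0,1])$ — which is exactly where the distance hypotheses are used — together with the elementary checks that the concatenation is continuous and stays in $\overline{\R^2_+}$.
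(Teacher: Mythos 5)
Your proof is correct and follows essentially the same strategy as the paper: attach the two line segments $[(x_2,0),\eta(0)]$ and $[\eta(1),(x_4,0)]$ to form $\ti\eta$, observe that the distance hypotheses prevent these segments from meeting $\gamma([0,1])$, and then invoke Lemma~\ref{sec:topological-lemma-1}. You have merely spelled out the triangle-inequality step that the paper leaves implicit.
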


\begin{proof}
  We only prove (i), the proof of (ii) is very similar. For two points $a,b \in \R^2$, we let $[a,b]:=\{ta+(1-t)b\,:\, t\in [0,1]\}$ denote the closed line segment joining $a$ and $b$. Then assumption (i) implies that the line segments $[(x_2,0), \eta(0)]$ and $[\eta(1),(x_4,0)]$ do not intersect the curve $\gamma$. On the other hand, adding these line segments to the curve $\eta$, we obtain a curve $\ti \eta: [0,1] \to \overline{\R^2_+}$ joining the points $(x_2,0)$ and $(x_4,0)$, so by Lemma~\ref{sec:topological-lemma-1} the curve $\ti \eta$ does intersect $\gamma$. It therefore follows that also the original curve $\eta$ must intersect $\gamma$, as claimed. 
\end{proof}

\begin{lemma}
  \label{sec:topological-lemma-second-variant}
  Let $t_0>0$, and let $0 \le x_2 <x_3 < x_4$. Suppose that $\gamma, \eta: [0,1] \to \overline{\R_+ \times \R_+}$ are continuous curves such that  $\gamma(0)=(0,t_0)$, $\gamma(1)=(x_3,0)$, $\eta(0)=(x_2,0)$, $\eta(1)=(x_4,0)$. Then the curves $\gamma$ and $\eta$ intersect.
\end{lemma}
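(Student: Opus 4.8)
The plan is to deduce this from Lemma~\ref{sec:topological-lemma-1} by reflecting $\gamma$ across the vertical axis. Since $0 \le x_2 < x_3$, we have $x_3 > 0$, so the four real numbers $-x_3 < x_2 < x_3 < x_4$ are strictly increasing, which is the configuration required by Lemma~\ref{sec:topological-lemma-1}. Let $R\colon \overline{\R^2_+} \to \overline{\R^2_+}$ be the reflection $R(x,t) = (-x,t)$. Because $\gamma(0) = (0,t_0)$ is a fixed point of $R$, the map $\overline\gamma\colon [-1,1] \to \overline{\R^2_+}$ given by $\overline\gamma(\tau) = \gamma(\tau)$ for $\tau \ge 0$ and $\overline\gamma(\tau) = R(\gamma(-\tau))$ for $\tau \le 0$ is continuous; after the affine reparametrization $[-1,1] \to [0,1]$ it becomes a continuous curve in $\overline{\R^2_+}$ joining $(-x_3,0)$ to $(x_3,0)$, whose trace equals $\gamma([0,1]) \cup R(\gamma([0,1]))$.

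Next I would apply Lemma~\ref{sec:topological-lemma-1} to the pair of curves $\overline\gamma$ (joining the outer points $-x_3$ and $x_3$) and $\eta$ (joining $x_2$ and $x_4$), both viewed as curves in $\overline{\R^2_+} \supset \overline{\R_+\times\R_+}$. This produces a point $p \in \overline\gamma([0,1]) \cap \eta([0,1])$. It then remains to check that $p$ lies on $\gamma$ itself rather than only on its mirror image: since $p \in \eta([0,1]) \subset \overline{\R_+\times\R_+}$, the first coordinate of $p$ is nonnegative, while every point of $R(\gamma([0,1]))$ has nonpositive first coordinate and can have first coordinate $0$ only if it lies on the $t$-axis, where $R$ acts as the identity and hence where it also belongs to $\gamma([0,1])$. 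Therefore $p \in \gamma([0,1]) \cap \eta([0,1])$, which is the desired conclusion.

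I do not anticipate a genuine difficulty here. The two places that need (routine) care are the continuity of $\overline\gamma$ at $\tau = 0$ --- which is precisely where the hypothesis $\gamma(0) = (0,t_0) \in \{x=0\}$ enters --- and the final sign-of-first-coordinate argument confirming that the intersection point supplied by Lemma~\ref{sec:topological-lemma-1} is an honest intersection of $\gamma$ and $\eta$ in the closed quarter-plane, which uses that $\eta$ stays in $\overline{\R_+\times\R_+}$. Both are elementary, so the whole argument is short.
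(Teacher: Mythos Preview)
Your proposal is correct and follows essentially the same approach as the paper: reflect $\gamma$ across the vertical axis to obtain a curve joining $(-x_3,0)$ to $(x_3,0)$, apply Lemma~\ref{sec:topological-lemma-1} to the four points $-x_3<x_2<x_3<x_4$, and then use that $\eta$ stays in the closed right half-plane to conclude the intersection point lies on $\gamma$ itself. Your write-up is in fact slightly more careful than the paper's in spelling out the continuity at $\tau=0$ and the sign-of-first-coordinate check.
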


\begin{proof}
  We define the continuous curve
$$
\ti \g:[-1,1] \to   \ov{\R^2_+}, \qquad \ti \g(t)=
\left\{
  \begin{aligned}
    &(-\gamma_1(|t|),\gamma_2(|t|))&& \qquad \text{if $t <0$,}\\
    &(\gamma_1(t),\gamma_2(t))&& \qquad \text{if $t\ge 0$.}\\
  \end{aligned}
\right.
$$
This curve joins the points $(-x_3,0)$ and $(x_3,0)$. Since $-x_3<x_2<x_3<x_4$, the curve $\ti \g$ must intersect $\eta$ by Lemma~\ref{sec:topological-lemma-1}. Since $\eta([0,1]) \subset \overline{\R_+ \times \R_+}$, this implies that $\eta$ intersects $\gamma$, as claimed.
\end{proof}

By the same argument as for Corollary~\ref{sec:topological-lemma-variant}, we can weaken the assumptions slightly to obtain the following statement. 

\begin{lemma}
  \label{sec:topological-lemma-third-variant}
 Let $t_0>0$, and let $0 \le  x_2 <x_3 < x_4$. Suppose that $\gamma, \eta: [0,1] \to \overline{\R_+ \times \R_+}$ are continuous curves such that  $\gamma(0)=(0,t_0)$, $\gamma(1)=(x_3,0)$ and
  $$
  |\eta(0)-(x_2,0)| < \dist((x_2,0),\gamma([0,1])), \qquad |\eta(1)-(x_4,0)| < \\dist((x_4,0),\gamma([0,1])).
  $$
  Then the curves $\gamma$ and $\eta$ intersect.
\end{lemma}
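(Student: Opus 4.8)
The plan is to mimic the proof of Lemma~\ref{sec:topological-lemma-variant}, but with Lemma~\ref{sec:topological-lemma-second-variant} playing the role that Lemma~\ref{sec:topological-lemma-1} played there. First I would attach to $\eta$ the two closed line segments $[(x_2,0),\eta(0)]$ and $[\eta(1),(x_4,0)]$, obtaining a continuous curve $\ti\eta:[0,1]\to\overline{\R_+\times\R_+}$ with $\ti\eta(0)=(x_2,0)$ and $\ti\eta(1)=(x_4,0)$. One has to check that $\ti\eta$ does take values in $\overline{\R_+\times\R_+}$: on each of the two added segments both coordinates are convex combinations of nonnegative reals, using $x_2\ge 0$, $x_4>0$ and $\eta([0,1])\subset\overline{\R_+\times\R_+}$.

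Next I would observe that neither added segment meets $\gamma([0,1])$. Indeed, any point $z\in[(x_2,0),\eta(0)]$ satisfies $|z-(x_2,0)|\le|\eta(0)-(x_2,0)|<\dist((x_2,0),\gamma([0,1]))$, hence $z\notin\gamma([0,1])$; the same reasoning applies to points of $[\eta(1),(x_4,0)]$ via the second hypothesis on $\eta(1)$.

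Then, since $\gamma(0)=(0,t_0)$, $\gamma(1)=(x_3,0)$, $\ti\eta(0)=(x_2,0)$, $\ti\eta(1)=(x_4,0)$ and $0\le x_2<x_3<x_4$, Lemma~\ref{sec:topological-lemma-second-variant} applies to the pair $\gamma,\ti\eta$ and produces a common point $p\in\gamma([0,1])\cap\ti\eta([0,1])$. By the previous paragraph $p$ cannot lie on either of the two attached segments, so $p\in\eta([0,1])$, and therefore $\gamma$ and $\eta$ intersect, as claimed.

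I do not expect a genuine obstacle here; the argument is a routine perturbation of Lemma~\ref{sec:topological-lemma-second-variant}. The only point that deserves a moment of care is verifying that the auxiliary curve $\ti\eta$ stays inside the closed quarter-plane $\overline{\R_+\times\R_+}$ so that Lemma~\ref{sec:topological-lemma-second-variant} is legitimately applicable, and this is exactly what the sign conditions $x_2\ge 0$ and $x_4>0$ guarantee.
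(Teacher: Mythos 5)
Your proof is correct and follows exactly the route the paper indicates: the paper states that Lemma~\ref{sec:topological-lemma-third-variant} follows "by the same argument as for" Lemma~\ref{sec:topological-lemma-variant}, i.e.\ attach the two line segments to $\eta$, check they avoid $\gamma$ by the distance hypotheses, and then invoke Lemma~\ref{sec:topological-lemma-second-variant} in place of Lemma~\ref{sec:topological-lemma-1}. Your extra remark verifying that the augmented curve $\ti\eta$ stays in $\overline{\R_+\times\R_+}$ (so that Lemma~\ref{sec:topological-lemma-second-variant} is applicable) is the only detail the paper leaves implicit, and it is handled correctly.
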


\end{document}